\newtheorem{theorem}{Theorem}[section]
\newtheorem{definition}[theorem]{Definition}
\newtheorem{lemma}[theorem]{Lemma}
\newtheorem{proposition}[theorem]{Proposition}
\newtheorem{remark}[theorem]{Remark}
\def\existingtheorem#1#2{
\noindent{\bf Theorem \ref{#1}.\hspace{4pt}}#2
}
\def\R{{\mathbb R}}
\def\C{{\mathbb C}}
\def\rn{{\mathbb R^n}}
\def\({\left(}
\def\){\right)}
\def\[{\left[}
\def\]{\right]}
\def\<{\left<}
\def\>{\right>}
\def\R{\mathbb R}
\def\C{\mathbb C}
\def\Z{\mathbb Z}
\def\N{\mathbb N}
\def\S{\mathscr S}
\def\less{\lesssim}
\DeclareSymbolFont{bbold}{U}{bbold}{m}{n}
\DeclareSymbolFontAlphabet{\mathbbn}{bbold}
\begin{document}

\title{John-Nirenberg Inequalities and Weight Invariant BMO Spaces}
\author{Jarod Hart}
\address{Higuchi Biosciences Center\\ University of Kansas\\ Lawrence, KS 66045}
\email{jvhart@ku.edu}

\author{Rodolfo H. Torres}
\address{Department of Mathematics\\ University of Kansas\\ Lawrence, KS 66045}
\email{torres@ku.edu}

\begin{abstract}
This work explores new deep connections between John-Nirenberg type inequalities and Muckenhoupt weight invariance for a large class of $BMO$-type spaces.  The results are formulated in a very general framework in which $BMO$ spaces are constructed using a base of sets, used also to define weights with respect to a non-negative measure (not necessarily doubling), and an appropriate oscillation functional. This  includes as particular cases many different function spaces on geometric settings of interest. As a consequence the weight invariance of several $BMO$ and Triebel-Lizorkin spaces considered in the literature is proved. Most of the invariance results obtained under this unifying approach are new even in the most classical settings.
\end{abstract}
\maketitle

\section{Introduction}

Spaces of Bounded Mean Oscillation ($BMO$) have been, and continue to be, of great interest and a subject of intense research in harmonic analysis.  One of the most fascinating aspects of $BMO$ spaces is their self-improvement properties, which go back to the work of John and Nirenberg in \cite{JN}.  

The space $BMO$ can be defined to be the collection of locally integrable functions $f$ such that
\begin{align*}
\|f\|_{BMO}=\sup_{Q\subset\R^n}\frac{1}{|Q|}\int_Q|f(x)-f_Q|dx<\infty.
\end{align*}
 Here, as usual,  $f_Q=\frac{1}{|Q|}\int_Qf(x)dx$ denotes the average of $f$ over the cube $Q$, and the supremum is taken over the collection $\mathcal Q$ of all cubes in $\R^n$ with sides parallel to the axes. The crucial property of $BMO$ functions is the John-Nirenberg inequality
 \begin{align*}
|\{x\in Q:|f(x)-f_Q|>\lambda\}|\leq c_1|Q| e^{-\frac{c_2\lambda}{\|f\|_{BMO}}}
\end{align*}
where $c_1$ and $c_2$ depend only on the dimension.

 A well-known immediate consequence of the John-Nirenberg inequality is the {\it $p$-power integrability}, which we also refer to as $p$-invariance,
$$
\|f\|_{BMO} \approx \sup_{Q\in\mathcal Q} \( \frac{1}{|Q|}\int_Q|f(x)-f_Q|^pdx\)^{1/p},
$$
for all $1< p<\infty$. 
Moreover, it can also be proved that the above  equivalence also hold for $0<p<1$ even though the right hand-side is not a norm in such a case. See for example the work of Str\"omberg \cite{stro} (or Lemma \ref{BMOp=BMO} below). 

The John-Nirenberg inequality also implies that $e^{|f(x)|/\rho}$ is  locally integrable for any $f\in BMO$ and an appropriate constant $\rho>0$. This exponential integrability led to a deep connection to Muckenhoupt weight theory, which in a rough sense says that $\log(A_2)=BMO$; here $A_p$ denotes the class of Muckenhoupt weights of index $1\leq p\leq\infty$.  More precisely, for every weight $w\in A_2$, $\log(w)\in BMO$, and for every $f\in BMO$ (real-valued), there exists $\lambda>0$ such that $e^{f/\lambda}\in A_2$; see for example the book of Garc\'{i}a-Cuerva and Rubio de Francia \cite{GCRdF}.  

Another deep connection was made between Muckenhoupt weights and $BMO$ in the work of Muckenhoupt and Wheeden \cite{MW}.  They proved that a function $f$ is in $BMO$ if and only if $f$ it is of bounded mean oscillation with respect to $w$ for all $w\in A_\infty$.  That is,  if for each 
$w\in A_\infty$, define $BMO_w$ to be the collection of all $w$-locally integrable functions $f$ such that
\begin{align*}
\|f\|_{BMO_w}=\sup_{Q\subset\R^n}\frac{1}{w(Q)}\int_Q|f(x)-f_{w,Q}|w(x)dx<\infty.
\end{align*}
Then,  $BMO=BMO_w$ and 
\begin{align}\label{added}
\|f\|_{BMO_w} \approx \|f\|_{BMO}.
\end{align}
Here $w(Q)=\int_Qw(x)dx$ is the $w$-measure of $Q$, and 
$$f_{w,Q}=\frac{1}{w(Q)} \int_Qf(x)w(x) dx.$$

Quantitative refinements of the above {\it weight invariant} result, were recently obtained by Hyt\"onen and P\'erez \cite{HP} and  Tsutsui \cite{Ts}, who gave precise control of the constants appearing  in \eqref{added}. 

\smallskip

Other weight invariant results in the literature include, for example, the work of Bui and Taibleson \cite{BT}, Harboure, Salinas, and Viviani \cite{HSV} and an article by the first author 
and Oliveira \cite{HO}.  In \cite[Theorem 3]{BT}, the authors show that the weighted endpoint Triebel-Lizorkin spaces $\dot F_{\infty,w}^{\alpha,\infty}$ coincide for all $w\in A_\infty$. In \cite[Proposition 4]{HSV}, the authors show that $BMO$, which agrees with the Triebel-Lizorkin space $\dot F_{\infty}^{0,2}$, continuously embeds into a weighted  version $\dot F_{\infty,w}^{0,2}$ for all $w\in A_2$ and $\dot F_{\infty}^{0,2}=\dot F_{\infty,w}^{0,2}$ for all 
$w\in A_1$ with comparable norms.  Moreover,  it is shown in \cite[Theorem 2.11]{HO} that for any $s>0$, 
$$\dot F_{\infty}^{s,2}=I_s(BMO)=I_s(BMO_w)=\dot F_{\infty,w}^{s,2},$$
 for all $w\in A_\infty$ with comparable norms; here $I_s(BMO)$ denotes the Sobolev-$BMO$ spaces defined by Neri \cite{N} and studied in depth by Strichartz \cite{Str1,Str2}.  

The main purpose of this article is to further explore the connections between John-Nirenberg type inequalities and Muckenhoupt weight invariance for generalized $BMO$ spaces and apply them to obtain several new results in a diversity of contexts.  These connections are made for $BMO$-type spaces formulated  in a very general setting involving a non-negative measure $\mu$, an oscillation functional $\Lambda$, a base of sets $\mathcal B$ 
used to define weights, some class of  ``functions" $\mathfrak F$, and a space $X(\mathcal B, \mu, \mathfrak F, \Lambda)=X(\mathcal B, \Lambda)= X$ consisting of  all the $f \in  \mathfrak F$  for which
\begin{align*}
\|f\|_X=\sup_{B\in\mathcal B}\frac{1}{\mu(B)}\int_B\Lambda(f,B)(x)d\mu(x)<\infty.
\end{align*}
For each $f\in\mathfrak F$ and $B\in\mathcal B$,  $\Lambda(f,B)(x)$ is assumed to be a non-negative, $\mu$-locally integrable function.  In principle, these objects generalize the role of $d\mu=dx$ the Lebesgue measure, $ \mathcal B=\mathcal Q$, and $\Lambda(f,Q)(x)=|f(x)-f_Q|$ for $f\in \mathfrak F= L^1_{loc}(\R^n)$, in which case of course $X(\mathcal Q,\Lambda)=BMO$.  However, there is much more flexibility in this definition.  For instance, we can apply our results in settings where $\mathcal B$ is the collection of balls, dyadic cubes, rectangles, dyadic rectangles, or other collections of sets, hence including various $BMO$ spaces in the literature associated to different geometries.  The results are also applicable in the situation where $\mathfrak F$ is the collection of locally integrable function, tempered distributions, or distributions modulo polynomials, and the functional $\Lambda$ takes various appropriate forms leading to distribution spaces like Triebel-Lizorkin spaces.  Moreover, the general approach we follow also works in a general $\sigma$-finite measure space and hence we obtain, not surprisingly, versions of results in the context of spaces of homogenous type, but more interestingly also in non-doubling settings where many tools of harmonic analysis fail. 

The main properties for $X(\mathcal B,\Lambda)$ of interest to us are weight invariance and John-Nirenberg $p$-power integrability or  $p$-invariance.  Our abstract setup is reminiscent of the one in the work by Franchi, P\'erez, and Wheeden in \cite{FPW} and others. The authors in \cite{FPW} considered inequalities of the form 
\begin{equation}\label{selfimprove}
\frac{1}{\mu(B)}\int_B |f(x)-f_B| d\mu(x)\lesssim \Gamma(f,B)
\end{equation}
for appropriate functionals $\Gamma$ taking values now in $\R$, and which lead to {\it self-improvements}.  In particular they investigated conditions on $\Gamma$ that allow the left-hand side of \eqref{selfimprove} to be replaced by its $p$-power version for $1<p<\infty$.  Several other authors (we shall give some references later on) have followed such abstract approaches too.  Nonetheless, and although one of our applications will overlap with results in \cite{FPW}, our results are geared more towards the establishment of several weighted norm equivalences that,  incidentally, will also work for $0<p<1$. We show that under remarkably minimal assumptions on the objects defining the spaces $X(\mathcal B,\Lambda)$, which apply to many situations, weight invariance and $p$-invariance are essentially equivalent properties. The precise statements are given in Theorems  \ref{smallnecessity}, \ref{necessity}, and 
\ref{sufficiency}. We then use known $p$-invariance results for several function spaces to prove their weight invariance as well. The following are a few examples of new invariance results obtained as corollaries of  Theorems \ref{smallnecessity} and \ref{necessity}, which showcase the convenience and benefits of the very general framework used. Here we state abridged versions of these results; see the corresponding theorems in Section 5 for their complete statements.

\medskip

\existingtheorem{wbmo}{{\it(The John-Nirenberg BMO space)} {\it Let $0<p<\infty$ and $w,v$ be in $ A_\infty$.  Then
\begin{align*}
\|f\|_{BMO}\approx\sup_{Q\subset\R^n}\(\frac{1}{w(Q)}\int_Q|f(x)-f_{v,Q}|^pw(x)dx\)^\frac{1}{p}.
\end{align*}}
}
\medskip

\existingtheorem{ndbmo}{{\it(BMO spaces with respect to non-doubling measures)}} {\it Let $\mu$ be a non-negative measure in $\rn$ such that $\mu(L)=0$ for any hyperplane $L$ orthogonal to one of the coordinate axes. Also let $0<p<\infty$ and $v,w\in A_\infty(\mu)$.  Then
\begin{align*}
\|f\|_{BMO_\mu(\rn)}&\approx\sup_{Q\in\mathcal Q}\(\frac{1}{\mu_w(Q)}\int_Q|f(x)-f_{\mu_v,Q}|^pd\mu_w(x)\)^{1/p}.
\end{align*}
}

\medskip

\existingtheorem{hbmo}{{\it(Duals of weighted Hardy spaces)}} {\it Let $0< r<\infty$, $w$ be in $A_1$, $v$ be in $A_\infty(w)$, and $\rho=v\cdot w$. Then for all $f$ in $BMO_{*,w}$
\begin{align*}
\|f\|_{{BMO}_{*,w}} :&=  \sup_{Q\in\mathcal Q} \frac{1}{w(Q)}\int_Q|f(x)-f_Q| dx\\
& \approx\sup_{Q\in\mathcal Q}\(\frac{1}{\rho(Q)}\int_Q|f(x)-f_Q|^rw(x)^{1-r}v(x)dx\)^{1/r}.
\end{align*}
}

\medskip

\existingtheorem{littlebmo}{{\it(Little $bmo$)}}{\it Let $0<p<\infty$ and $v$ and $w$ be rectangular weights in the class $ A_\infty(\R^{n_1}\times\R^{n_2})$.  Then
\begin{align*}
\|f\|_{bmo}  \approx\sup_{R\in\mathcal R}\(\frac{1}{w(R)}\int_R|f(x)-f_{v,R}|^pw(x)dx\)^{1/p}.
\end{align*}
}

\medskip  

\existingtheorem{tlbmo}{{\it(End-point Triebel-Lizorkin spaces)}}{\it Let $\alpha\in\R$, $0<p,q<\infty$, and $w$ in $A_\infty$.  Then for all $f$ in $\dot F_\infty^{\alpha,q}$
\begin{align*}
\|f\|_{\dot F_{\infty}^{\alpha,q}} &  \approx\sup_{Q\in\mathcal Q}\(\frac{1}{w(Q)}\int_Q\(\sum_{k\in\Z:2^{-k}\leq\ell(Q)}\!\!\!(2^{\alpha k}|\varphi_k*f(x)|)^q\)^\frac{p}{q}\!\!w(x)dx\)^\frac{1}{p}.
\end{align*}
}

\medskip  

The rest of this article is organized as follows. We introduce the terminology employed above and most of the notation used throughout the article in Section~2.   In Sections 3 and 4, we prove several results related to the equivalence between  the $p$-invariance and weight invariance for any $BMO$ type space $X(\mathcal B,\Lambda)$; in particular Theorems \ref{necessity} and \ref{sufficiency} alluded to before.  Finally in Section 5 we present the proofs of the theorems stated above, as well as some extensions of them and other applications.

\section{Definitions and Preliminaries}

 Let  $(Y, \Sigma, \mu)$ be a measure space, with $\mu$ non-negative and $\sigma$-finite, 
and let $\mathcal B$ be a collection of sets in the $\sigma$-algebra $\Sigma$ of $\mu$-measurable  sets such that  $0<\mu(B)<\infty$ for all $B\in \mathcal B$. Let  also $\mathcal M_\mathcal B$ be a sublinear operator that maps $L^1_{loc}(\mu)$ into non-negative $\mu$-measurable functions.  Here $L^1_{loc}(\mu)$ denotes the collection of all $\mu$-measurable functions $f:Y\rightarrow\C$ such that $\int_B|f(x)|d\mu(x)<\infty$ for all $B\in\mathcal B$.

\begin{definition}
Given $1<p<\infty$, a non-negative locally $\mu$-integrable function $w$ is in the Muckenhoupt class of weights $A_p(\mathcal B,\mu)=A_p(\mathcal B)$, 
or simply $A_p$, if 
\begin{align*}
[w]_{A_p(\mathcal B)}:=\sup_{B\in\mathcal B}\(\frac{1}{\mu(B)}\int_Bw(x)d\mu(x)\)\(\frac{1}{\mu(B)}\int_Bw(x)^{-p'/p}d\mu(x)\)^{p/p'} \!\!< \infty,
\end{align*}
where, as usual, $p'$ denotes the H\"older conjugate of $1<p<\infty$ given by $p'=\frac{p}{p-1}$.  

Define $A_\infty(\mathcal B)$ to be the union of all $A_p(\mathcal B)$ for $1<p<\infty$.  We say that $w\in A_1(\mathcal B)$ if $\mathcal M_{\mathcal B}w(x)\leq [w]_{A_1(\mathcal B)}w(x)$, where $[w]_{A_1(\mathcal B)}$ denotes the minimal constant satisfying this inequality.  

Define also for $1<\delta<\infty$ the reverse H\"older class $RH_\delta(\mathcal B)$  to be the collection of all $w\in A_\infty(\mathcal B)$ such that
\begin{align*}
\(\frac{1}{\mu(B)}\int_Bw(x)^\delta d\mu(x)\)^{1/\delta}\leq[w]_{RH_\delta}\frac{1}{\mu(B)}\int_Bw(x)d\mu(x),
\end{align*}
for all $B\in\mathcal B$.  Here again the reverse H\"older constant $[w]_{RH_\delta}$ for $w$ is the smallest constant such that the above inequality holds.  It will be necessary sometimes to specify the reverse H\"older class with respect to a measure $\mu$ and we will write 
$RH_\delta(\mathcal B, \mu)$  or  $RH_\delta( \mu)$ in such a case.
\end{definition}

\begin{remark}
{\rm In applications $\mathcal M_\mathcal B$ will be a  maximal function associated to the base $\mathcal B$. It is interesting, however, that we do not need $\mathcal M_\mathcal B$ to be so to prove our general results. Moreover even if $\mathcal M_\mathcal B$ is such a maximal function,  we are not assuming that the base is a Muckenhoupt base, i.e. that the $A_p$ classes characterize, or even suffice for, the boundedness of $\mathcal M_\mathcal B$ on weighted $L^p$ spaces.  On the other hand, we note that in the  above general setting, we will not be able to use any self-improving property on the weights unless we impose a reverse H\"older condition.}
\end{remark}

\begin{definition}\label{d:main}
Let $\mathfrak F$ be a set (typically of functions or distributions), and let $\Lambda$ be a mapping from $\mathfrak F\times\mathcal B$ into the collection of  $\mu$-measurable functions on $Y$, such that  $\Lambda(f,B)$ is non-negative for all $f\in\mathfrak F$ and $B\in\mathcal B$.  We consider the following spaces and properties.
\begin{enumerate}[(i)]
\item For $w\in A_\infty(\mathcal B)$ and $0<p<\infty$, $X_w^p(\mathcal B, \mu,\mathfrak F, \Lambda)$ is the collection of all $f\in\mathfrak F$ such that
\begin{align*}
\|f\|_{X_w^p}^p:=\sup_{B\in\mathcal B}\frac{1}{w(B)}\int_B\Lambda(f,B)(x)^pw(x)d\mu(x)<\infty,
\end{align*}
where $w(B)=\int_Bw(x)d\mu(x)$.   We will simply write $X_w^p(\mathcal B,\mu, \Lambda)$, more typically $X_w^p(\mathcal B,\Lambda)$, or just $X_w^p$, when the other objects in the definition of $X_w^p(\mathcal B, \mu,\mathfrak F, \Lambda)$ are clear from or do not play a significant role in the particular context being considered.
We will  also use the notation $X^p(\mathcal B,\Lambda)
=X_1^p(\mathcal B,\Lambda)$ when $w$ is the constant function $1$, $X_w(\mathcal B,\Lambda)=X_w^1(\mathcal B,\Lambda)$ when $p=1$, and $X(\mathcal B,\Lambda)=X_1^1(\mathcal B,\Lambda)$ when both $w=1$ and $p=1$. We will call $X_w^p(\mathcal B,\Lambda)$ an ``oscillation space" and $\Lambda$ an ``oscillation functional".

\medskip

\item The oscillation space $X(\mathcal B,\Lambda)$ satisfies the weight invariance property for a collection of weights $\mathcal W\subset A_\infty(\mathcal B)$ if $X_w(\mathcal B,\Lambda)=X_v(\mathcal B,\Lambda)$ for all $w,v\in \mathcal W$, and there is a constant $b_{w,v}>0$ such that $$\|f\|_{X_v}\leq b_{w,v}\|f\|_{X_w}$$
for all $f\in X_w(\mathcal B,\Lambda)$.  Without loss of generality, we let $b_{w,v}$ be the smallest such constant.

\medskip

\item For an oscillation space $X(\mathcal B,\Lambda)$ that satisfies the weight invariance property for all weights in $A_p(\mathcal B)$, define for 
$t\geq 1$ the function
\begin{align}
\Psi_p(t):=\sup\{b_{1,v}:v\in A_p(\mathcal B),\;\;[v]_{A_p}\leq t\}.\label{Psi}
\end{align}

\medskip

\item The oscillation space $X_w(\mathcal B,\Lambda)$ satisfies the $p$-power John-Nirenberg property, or $p$-invariance property, for an interval $I\subset(0,\infty)$ if for any $p,q\in I$ with $p<q$ there exists a constant  $C(w) >0$ (which may also depend on $p$ and $q$)  
such that 
 $$\|f\|_{X_w^q}\leq C(w) \|f\|_{X_w^p}$$
for all $f\in X_w(\mathcal B,\Lambda)$.  We let $c_{p,q}(w)$ be the infimum of the constants such that the above inequality holds.  When $w=1$, we use the notation $c_{p,q}=c_{p,q}(1)$.  
\medskip
\end{enumerate}
\end{definition}

Throughout this article, we reserve the lower case subscripted letters $b_{w,v}$ and $c_{p,q}(w)$, to play the role they do in the definitions above.  

\medskip

\begin{remark}\label{r:p<1}
{\rm
Note that since $\Lambda(f,B)$ is a non-negative $\mu$-measurable function, it follows that $\Lambda(f,B)^p\, w$ is also a non-negative $\mu$-measurable function for all $f\in\mathfrak F$, $B\in\mathcal B$, $0<p<\infty$, and $w\in A_\infty(\mathcal B,\mu)$.  Hence the integral used to compute $\|\cdot\|_{X_w^p}$ is well-defined, though possibly infinite for some elements $f\in\mathfrak F$. 

Note also that despite the notation $\|\cdot\|_{X_w^p}$ and the name oscillating space,  the collection $X_w^p(\mathcal B,\Lambda)$ may not be a Banach space, a normed space, or even a vector space.  We use this notation because it is conducive to think of it in this context, even though we do not need linearity, completeness, or a normed space structure for our computations.   If $X$ and $Z$ are either oscillation spaces or normed function spaces, the notation $X \approx Z$ will always mean that $X=Z$ as sets and $\|f\|_X \approx \|f\|_Z$ for all of their elements.

}
\end{remark}

\begin{remark}
{\rm   The reason for introducing the technically looking function $\Psi_p$ in \eqref{Psi} will become apparent in Section~4, where it will be used to quantify some uniform estimates.  Note that for each $p$, $\Psi_p(t)$ is obviously a non-decreasing function of $t$  that in principle could take the value $\infty$ for some or all $t\geq 1$.}
\end{remark}

\begin{remark}\label{pinvremark}
{\rm 
It is important to remark that the inequality
$$\|f\|_{X_w^q}\leq c_{p,q}(w)\|f\|_{X_w^p}$$
in {\it(iv)} is only required for $f\in X_w(\mathcal B,\Lambda)$, not for $f\in X_w^p(\mathcal B,\Lambda)$ or $f\in X_w^q(\mathcal B,\Lambda)$.  Hence, there is a distinction between the $p$-power John-Nirenberg property when $p\geq 1$ and when $p<1$.  
 
Indeed, suppose $X(\mathcal B,\Lambda)$ satisfies the p-power John-Nirenberg property for $[1,\infty)$.  This means that $\|f\|_{X^p}\leq c_{1,p}\|f\|_{X}$ for all $1<p<\infty$ and $f\in X(\mathcal B,\Lambda)$; hence $X(\mathcal B,\Lambda)\subset X^p(\mathcal B,\Lambda)$.  It is immediate by H\"older's (or Young's) inequality that $\|f\|_{X}\leq\|f\|_{X^p}$, and hence $X^p(\mathcal B,\Lambda)\subset X(\mathcal B,\Lambda)$.  Therefore the John-Nirenberg property for $X(\mathcal B,\Lambda)$ on $[1,\infty)$ implies that $X(\mathcal B,\Lambda)=X^p(\mathcal B,\Lambda)$ and $\|\cdot\|_{X}\approx\|\cdot\|_{X^p}$ for all $1\leq p<\infty$.

On the other hand, when $X(\mathcal B,\Lambda)$ satisfies the $p$-power John-Nirenberg property for $(0,1]$, we cannot make such a strong conclusion.  In this case, it follows that $\|f\|_{X^p}\leq \|f\|_X\leq c_{p,1}\|f\|_{X^p}$ for all $f\in X(\mathcal B,\Lambda)$.  The restriction to ``functions'' in $X(\mathcal B,\Lambda)$ is of substance here.  We cannot conclude that $f\in X^p(\mathcal B,\Lambda)$ implies $f\in X(\mathcal B,\Lambda)$ or that $X(\mathcal B,\Lambda)=X^p(\mathcal B,\Lambda)$ in this case. We can only conclude that $\|\cdot\|_X\approx\|\cdot\|_{X^p}$  for all $0<p<1$ when restricted to $X(\mathcal B,\Lambda)$. }
\end{remark}

\begin{remark}
  {\rm 
  There is a plethora of $BMO$ spaces that can be realized as  $X(\mathcal B,\mu,\mathfrak F,\Lambda)$ spaces with the appropriate choices of $\mathcal B$, $\mu$, $\mathfrak F$, and $\Lambda$.  For example, every space mentioned in the introduction can be realized as such a space in an obvious way suggested by their standard definitions.  More details are given  in Section~5.}
\end{remark}

We will now lay out several assumptions on $\mu$, $\mathcal B$, and $\mathcal M_{\mathcal B}$.  Different subsets of these assumptions will be used to prove different results. We will refer to them throughout as assumptions {\it A1}--{\it A4}, defined as follows.

\medskip

{\it
\begin{enumerate}
\renewcommand{\labelenumi}{\textbf{S.\theenumi}}
\item[A1.] $A_1(\mathcal B)\subset A_p(\mathcal B)$ and $[w]_{A_p}\leq [w]_{A_1}$ for all $1<p<\infty$.
\item[A2.] $A_\infty(\mathcal B)  =  \bigcup_{1<\delta<\infty}RH_\delta(\mathcal B)$. 
\item[A3.] For all $1<p<\infty$, the operator $\mathcal M_{\mathcal B}$ satisfies $0<\|\mathcal M_{\mathcal B}\|_{L^p,L^p}<\infty$ and 
$\limsup_{p\rightarrow1^+}\|\mathcal M_\mathcal B\|_{L^p,L^p} =\infty$. 
\item[A4.] 
There exist functions $\Delta :(1,\infty)^2 \rightarrow(1,\infty)$, non-increasing in each variable, and $ K:(1,\infty)^2\rightarrow(1,\infty)$, non-decreasing in each variable,  such that if $u\in A_p(\mathcal B)$ then $u\in RH_{\Delta (p, [u]_{A_p})}(\mathcal B)$ with 
$$[u]_{RH_{\Delta (p, [u]_{A_p})}}\leq K(p, [u]_{A_p})$$
 for all $1<p<\infty$. 
\end{enumerate}
}

\begin{remark}\label{Delta+K}
{\rm It is well-known that assumptions {\it A1}--{\it A4} hold in many situations when $\mathcal M_\mathcal B$ is the centered or uncentered Hardy-Littlewood maximal function associated to $\mathcal B$.  It is worth noting that in the non-doubling setting, it is necessary to choose $\mathcal M_\mathcal B$ to be the centered maximal operator to assure {\it A3} holds, since the uncentered Hardy-Littlewood maximal operator associated to a non-doubling measure $\mu$ may not be $L^p(\mu)$ bounded for $1<p<\infty$. Property {\it A2} essentially represents the existence of a reverse H\"older inequality, while {\it A4} is a quantified version of that.  Actually {\it A4} implies {\it A2}, but we list both since only {\it A2} will be needed in some of our arguments.   We direct the reader to \cite{HPR,HP,LPR} and the reference therein for more information on the recent interest in sharp quantitative versions of the reverse H\"older inequality.
 
 In all the cases in which we are aware {\it A4} holds, $\Delta$ and $K$ can be taken in the form $\Delta(p,t)=1+\frac{1}{\tau(p)t}$ and $K(p,t)=C$,  for an appropriate function $\tau(p)$ and constant $C>0$.  For example, the following choices for $\Delta$ and $K$ are sufficient to satisfy {\it A4} in the corresponding settings:
\begin{itemize}
\item $\Delta(p,t)=1+\frac{1}{2^{n+1}t-1}$ and $K(p,t)=2$ for the standard Euclidean setting using cubes and the Lebesgue measure (see \cite[Theorem 2.3]{HPR}).

\item $\Delta(p,t)=1+\frac{1}{2^{p+2}t}$ and $K(p,t)=2$ for the Euclidean setting using rectangles and the Lebesgue measure (see \cite[Theorem 1.2]{LPR}).

\item $\Delta(p,t)=1+\frac{1}{\tau t}$ and $K(p,t)=C$ for the space of homogeneous type setting, where $\tau$ and $C$ are fixed constants depending on parameters of the underlying space (see \cite[Theorem 1.1]{HPR}).

\item  $\Delta(p,t)=1+\frac{1}{2^{p+1}B(n)t}$ and $K(p,t)=2$ for the Euclidian setting using non-doubling measures, where $B(n)$ is the Besicovitch constant for $\R^n$ (see \cite[Lemma 2.3]{OP} and the remarks in the  introduction of \cite{LPR}).

\end{itemize}
Actually, in some of these examples we can take $K(p,t)=2^{1/\Delta(p,t)}$. However this dependance on $p$ and $t$ is of no relevance since 
$\Delta(p,t)\geq 1$ and hence $1\leq K(p,t) \leq 2$. Therefore, for simplicity, we shall use $K(p,t)= 2$.
}
\end{remark}

\section{Sufficient Conditions for Weight Invariance}

In this section we provide sufficient conditions for weight invariance of $X(\mathcal B,\Lambda)$.  The first theorem of this section holds solely based on the definition of the $A_p(\mathcal B)$ weights, while the second one only needs as an additional assumption {\it A2}.  
Proposition \ref{c:Psiestimate} needs the stronger assumption {\it A4}.

\begin{lemma}\label{l:smallpJN} 
Fix $w\in A_\infty(\mathcal B)$.  If there exist $0<q_0<p_0<\infty$, with $q_0\leq 1$ such that $X_w(\mathcal B,\Lambda)$ satisfies the $p$-power John-Nirenberg property on the interval $[q_0,p_0]$, then $X_w(\mathcal B,\Lambda)$ also satisfies the $p$-power John-Nirenberg property on the interval $(0,p_0]$.
\end{lemma}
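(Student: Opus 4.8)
The plan is to reduce the statement to the single inequality
\begin{equation*}
\|f\|_{X_w^{q_0}}\le C(w)\,\|f\|_{X_w^{p}}\qquad\text{for all }0<p<q_0\text{ and all }f\in X_w(\mathcal B,\Lambda),
\end{equation*}
and then to prove this by a Lyapunov-type interpolation between the exponents $p$ and $p_0$ followed by an absorption argument. For $B\in\mathcal B$ and $f\in\mathfrak F$ it is convenient to set $d\nu_B=w\,d\mu/w(B)$, a probability measure on $B$, so that $\|f\|_{X_w^{r}}=\sup_{B\in\mathcal B}\|\Lambda(f,B)\|_{L^{r}(\nu_B)}$ for every $0<r<\infty$.

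First I would observe that the remaining cases follow once the displayed inequality is known. Since each $\nu_B$ is a probability measure, Jensen's inequality gives $\|\Lambda(f,B)\|_{L^{r}(\nu_B)}\le\|\Lambda(f,B)\|_{L^{s}(\nu_B)}$ whenever $r\le s$, hence $\|f\|_{X_w^{r}}\le\|f\|_{X_w^{s}}$ for $r\le s$. Therefore: if $0<p<q\le q_0$ then $\|f\|_{X_w^{q}}\le\|f\|_{X_w^{q_0}}\le C(w)\|f\|_{X_w^{p}}$; if $0<p<q_0<q\le p_0$ then, combining the displayed inequality with the assumed property on $[q_0,p_0]$, $\|f\|_{X_w^{q}}\le c_{q_0,q}(w)\|f\|_{X_w^{q_0}}\le c_{q_0,q}(w)C(w)\|f\|_{X_w^{p}}$; and the case $q_0\le p<q\le p_0$ is the hypothesis itself. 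This covers every pair in $(0,p_0]$.

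For the displayed inequality, fix $0<p<q_0<p_0<\infty$ and let $\theta\in(0,1)$ be defined by $1/q_0=(1-\theta)/p+\theta/p_0$. For each $B\in\mathcal B$ the standard interpolation inequality for $L^{r}$ norms, applied to $\Lambda(f,B)$ with respect to $\nu_B$, gives
\begin{equation*}
\|\Lambda(f,B)\|_{L^{q_0}(\nu_B)}\le\|\Lambda(f,B)\|_{L^{p}(\nu_B)}^{1-\theta}\,\|\Lambda(f,B)\|_{L^{p_0}(\nu_B)}^{\theta}.
\end{equation*}
Taking the supremum over $B\in\mathcal B$ and using that the supremum of a product is at most the product of the suprema, we get $\|f\|_{X_w^{q_0}}\le\|f\|_{X_w^{p}}^{1-\theta}\|f\|_{X_w^{p_0}}^{\theta}$. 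Now the assumed $p$-power John--Nirenberg property on $[q_0,p_0]$, in the form $\|f\|_{X_w^{p_0}}\le c_{q_0,p_0}(w)\|f\|_{X_w^{q_0}}$, yields
\begin{equation*}
\|f\|_{X_w^{q_0}}\le c_{q_0,p_0}(w)^{\theta}\,\|f\|_{X_w^{p}}^{1-\theta}\,\|f\|_{X_w^{q_0}}^{\theta}.
\end{equation*}

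The one delicate point, and the place where the hypotheses $q_0\le1$ and $f\in X_w(\mathcal B,\Lambda)$ are actually used, is the absorption of the factor $\|f\|_{X_w^{q_0}}^{\theta}$. Applying Jensen's inequality on each $(B,\nu_B)$ with exponent $q_0\le1$ shows $\|f\|_{X_w^{q_0}}\le\|f\|_{X_w^{1}}=\|f\|_{X_w}<\infty$ for $f\in X_w(\mathcal B,\Lambda)$; this is precisely the restriction emphasized in Remark~\ref{pinvremark}. Hence $\|f\|_{X_w^{q_0}}^{\theta}<\infty$: if it equals $0$ the displayed inequality is trivial, and otherwise we may cancel it to obtain $\|f\|_{X_w^{q_0}}^{1-\theta}\le c_{q_0,p_0}(w)^{\theta}\|f\|_{X_w^{p}}^{1-\theta}$, i.e. $\|f\|_{X_w^{q_0}}\le c_{q_0,p_0}(w)^{\theta/(1-\theta)}\|f\|_{X_w^{p}}$, which is the claim with $C(w)=c_{q_0,p_0}(w)^{\theta/(1-\theta)}$ (a constant that depends also on $p$, $q_0$, $p_0$). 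I do not expect any serious obstacle beyond this: the whole argument uses only that each $\nu_B$ is a probability measure together with the assumed property on $[q_0,p_0]$, the sole subtlety being to keep the a priori finiteness of $\|f\|_{X_w^{q_0}}$ in view before performing the absorption.
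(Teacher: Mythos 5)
Your proposal is correct. It shares the crux of the paper's argument --- a multiplicative Hölder-type inequality between the quantities $\|f\|_{X_w^{r}}$ followed by absorption of a factor that is finite precisely because $q_0\le 1$ and $f\in X_w(\mathcal B,\Lambda)$ --- but the execution is genuinely different. The paper argues by contradiction: it sets $q=\inf\{r>0: X_w(\mathcal B,\Lambda)\ \text{is $p$-invariant on}\ [r,p_0]\}$ and, assuming $q>0$, applies Cauchy--Schwarz at three equally spaced exponents $r-2\epsilon<r-\epsilon<r$ just above $q$ to push the invariance interval slightly below $q$; the interval thus creeps down only by $\epsilon$ at a time and no explicit constant is tracked. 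You instead apply the three-exponent Lyapunov (log-convexity) inequality once, interpolating $q_0$ between an arbitrary $p\in(0,q_0)$ and $p_0$, replace $\|f\|_{X_w^{p_0}}$ by $c_{q_0,p_0}(w)\|f\|_{X_w^{q_0}}$ via the hypothesis, and absorb the finite factor $\|f\|_{X_w^{q_0}}^{\theta}$; this reaches every $p>0$ in a single step, dispenses with the infimum and the contradiction, makes transparent exactly where $q_0\le1$ and the restriction to $f\in X_w(\mathcal B,\Lambda)$ enter (the point flagged in Remark \ref{pinvremark}), and yields the explicit constant $c_{q_0,p_0}(w)^{\theta/(1-\theta)}$ with $\theta=(1/p-1/q_0)/(1/p-1/p_0)$. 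Your reduction of the remaining exponent pairs, using monotonicity of $r\mapsto\|\Lambda(f,B)\|_{L^r(\nu_B)}$ for the probability measures $d\nu_B=w\,d\mu/w(B)$ together with the assumed invariance on $[q_0,p_0]$, is complete, and since the Lyapunov inequality is itself only an application of Hölder, your route loses no generality while being more direct and more quantitative than the paper's.
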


\begin{proof}
Let  $$ q= \inf \{r>0: X_w(\mathcal B,\Lambda) \text {\, is $p$-invariant on\,}  [r,p_0]\}.$$  Clearly $0\leq q\leq q_0$. By way of contradiction, assume that $q>0$, and let $\epsilon>0$  and $r>q$ be selected so that  $q<r-\epsilon<p_0$ but $0<r-2\epsilon<q$. Then, for any $f \in X_w(\mathcal B,\Lambda)$,
\begin{align*}
\|f\|_{X_w^{r-\epsilon}}^{r-\epsilon}&\leq \sup_B\frac{1}{w(B)}\|\Lambda(f,B)^{r/2-\epsilon}\|_{L_w^2(B)}\|\Lambda(f,B)^{r/2}\|_{L_w^2(B)}\\
&\leq\|f\|_{X_w^{r-2\epsilon}}^{(r-2\epsilon)/2}\|f\|_{X_w^r}^{r/2}\\
&\leq c_{r-\epsilon,r}^{r/2}(w) \|f\|_{X_w^{r-2\epsilon}}^{(r-2\epsilon)/2}\|f\|_{X_w^{r-\epsilon}}^{r/2}.
\end{align*}
 Because of the invariance on $[q_0,p_0]$ and the fact that $q_0 \leq1$, we also have
$$\|f\|_{X_w^{r-\epsilon}} \lesssim \|f\|_{X_w^{q}} \leq \|f\|_{X_w} < \infty.$$
Therefore
\begin{align*}
\|f\|_{X_w^{r-\epsilon}}&\leq c_{r-\epsilon,r}^\frac{r}{r-2\epsilon} (w)  \|f\|_{X_w^{r-2\epsilon}},
\end{align*}
contradicting the definition of $q$.
\end{proof}

The next theorem shows that $p$-invariance in a limited range of exponents implies appropriate weight invariance.

\begin{theorem}\label{smallnecessity}

If $X(\mathcal B,\Lambda)$ satisfies the $p$-power John-Nirenberg property for $[1,p_0]$ for some $p_0>1$ and $X(\mathcal B,\Lambda)=X^p(\mathcal B,\Lambda)$ for all $0<p\leq 1$, then $X(\mathcal B,\Lambda)$ satisfies the weight invariant property for $RH_{p_0'}(\mathcal B)$.  In particular, for each  $1<q<\infty$ such that $w\in A_q(\mathcal B)\cap RH_{p_0'}(\mathcal B)$, it holds that  
\begin{align*}
&(c_{1/q,1}[w]_{A_q})^{-1}\|f\|_X\leq\|f\|_{X_w}\leq c_{1,p_0}[w]_{RH_{p_0'}}\|f\|_X
\end{align*}
for all $f\in X(\mathcal B,\Lambda)$.

If in addition $w\in  RH_\sigma$ for some $\sigma>p_0'$, then $X_w(\mathcal B,\Lambda)$ satisfies the $p$-power John-Nirenberg property for $(0,p_0/\sigma'\,]$ and 
\begin{align*}
&c_{1,p_0/\sigma'}(w)\leq c_{1,p_0}c_{1/q,1} \( [w]_{RH_\sigma}\)^{\frac{\sigma}{p_0(\sigma-p_0')}} [w]_{A_q}.
\end{align*}
\end{theorem}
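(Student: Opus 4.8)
The proof naturally splits into three assertions: the weight-invariance equality $X_w = X$ on $RH_{p_0'}(\mathcal B)$, the two-sided quantitative bound, and the self-improved $p$-invariance of $X_w$ when $w$ enjoys a better reverse Hölder exponent. I would prove the upper bound $\|f\|_{X_w}\le c_{1,p_0}[w]_{RH_{p_0'}}\|f\|_X$ first, since it is the heart of the matter. Fix $B\in\mathcal B$ and apply Hölder's inequality with exponents $p_0$ and $p_0'$ to the integral $\int_B\Lambda(f,B)(x)\,w(x)\,d\mu(x)$: this gives
\begin{align*}
\frac{1}{w(B)}\int_B\Lambda(f,B)\,w\,d\mu
\le \frac{1}{w(B)}\Bigl(\int_B\Lambda(f,B)^{p_0}\,d\mu\Bigr)^{1/p_0}\Bigl(\int_B w^{p_0'}\,d\mu\Bigr)^{1/p_0'}.
\end{align*}
Now the reverse Hölder hypothesis $w\in RH_{p_0'}(\mathcal B)$ bounds $\bigl(\mu(B)^{-1}\int_B w^{p_0'}\,d\mu\bigr)^{1/p_0'}$ by $[w]_{RH_{p_0'}}\,\mu(B)^{-1}w(B)$, while the first factor is $\le\mu(B)^{1/p_0}\|f\|_{X^{p_0}}$ by definition of the unweighted $X^{p_0}$ norm; combining and using the hypothesis $\|f\|_{X^{p_0}}\le c_{1,p_0}\|f\|_X$ yields the claimed estimate after the $\mu(B)$ factors cancel. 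Note this already shows $X\subset X_w$.

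**The lower bound and the equality.** For the reverse inclusion and the bound $\|f\|_X\le c_{1/q,1}[w]_{A_q}\|f\|_{X_w}$, I would run the dual Hölder argument: write $\int_B\Lambda(f,B)\,d\mu = \int_B \Lambda(f,B)\,w^{1/q}\,w^{-1/q}\,d\mu$ and apply Hölder with exponents $q$ and $q'$, producing a factor $\bigl(\int_B\Lambda(f,B)^q\,w\,d\mu\bigr)^{1/q}$ times $\bigl(\int_B w^{-q'/q}\,d\mu\bigr)^{1/q'}$. The first factor is controlled by $w(B)^{1/q}\|f\|_{X_w^q}$; the product of the $A_q$-averages is exactly $[w]_{A_q}$ after arranging the $\mu(B)$ and $w(B)$ powers. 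But $\|f\|_{X_w^q}$ for $q>1$ is not directly comparable to $\|f\|_{X_w}$ — here is where I invoke the hypothesis $X=X^p$ for $0<p\le1$ together with Lemma~\ref{l:smallpJN} applied to $w$: actually the cleaner route is to use the unweighted hypothesis $X=X^{1/q}$ (since $1/q<1$) to bootstrap, but in fact the constant $c_{1/q,1}$ in the statement signals that one uses $p$-invariance of $X_w$ \emph{downward} from exponent $1$ to $1/q$. The point requiring care is that we only know $X=X^p$ for the unweighted space, not $X_w=X_w^p$, so one must first establish that $f\in X_w$ implies $f\in X$ (via the first part once equality is known) — this is mildly circular and must be untangled by first proving $X_w\subseteq X$ directly from the $A_q$ argument at the level of \emph{membership}, then upgrading to the quantitative comparison. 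I expect this interplay to be the main obstacle: carefully ordering the inclusions so the quantitative constants come out as stated without assuming what is being proved.

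**The self-improvement tail.** For the last display, suppose additionally $w\in RH_\sigma$ with $\sigma>p_0'$. I would mimic the upper-bound computation but with a Hölder splitting tuned to $\sigma$: for $f\in X_w(\mathcal B,\Lambda)$ and exponent $p_0/\sigma'$, estimate $\frac{1}{w(B)}\int_B\Lambda(f,B)^{p_0/\sigma'}w\,d\mu$ by Hölder with exponents $\sigma'$ (on $\Lambda^{p_0/\sigma'}w^{1/\sigma'}$) and $\sigma$ (on $w^{1/\sigma}$), getting $\bigl(\int_B\Lambda^{p_0}w\,d\mu\bigr)^{1/\sigma'}$ times $\bigl(\int_B w^\sigma\,d\mu\bigr)^{1/\sigma}$; the reverse Hölder bound converts the second factor into $[w]_{RH_\sigma}^{\cdots}$ times the right power of $w(B)/\mu(B)$. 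Then one bounds $\int_B\Lambda^{p_0}w\,d\mu$ by passing to the unweighted $X^{p_0}$ norm using the already-established equivalence $X_w\approx X$ — i.e. $\|f\|_{X_w^{p_0}}^{\text{weighted}}\lesssim$ (constant)$\cdot\|f\|_{X^{p_0}}\le c_{1,p_0}\|f\|_X\le c_{1,p_0}c_{1/q,1}[w]_{A_q}\|f\|_{X_w}$, chaining the constants from the first parts. Tracking the exponents $\frac{\sigma}{p_0(\sigma-p_0')}$ on $[w]_{RH_\sigma}$ is then a bookkeeping exercise: it arises precisely from the power $1/\sigma$ on the reverse Hölder factor combined with dividing through by $w(B)$ to the power $p_0/\sigma'\cdot\frac1{p_0}=1/\sigma'$ and taking the overall $(p_0/\sigma')$-th root. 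The only genuine subtlety is checking $p_0/\sigma'<1$ so that the John–Nirenberg property is being claimed on a subinterval of $(0,1]$ as asserted — this follows since $\sigma>p_0'$ forces $\sigma'<p_0'/(p_0'-1)=p_0$, hence $p_0/\sigma'>1$... so in fact one should read the claimed interval endpoint $p_0/\sigma'$ knowing it exceeds $1$, and the $p$-invariance on $(0,p_0/\sigma']$ then follows by combining this new estimate on $[1,p_0/\sigma']$ with Lemma~\ref{l:smallpJN} to fill in $(0,1]$. Getting this last logical assembly right, rather than any single inequality, is where I would spend the most care.
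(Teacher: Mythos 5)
Your upper bound $\|f\|_{X_w}\le c_{1,p_0}[w]_{RH_{p_0'}}\|f\|_X$ is exactly the paper's argument. The other two parts, however, each apply H\"older to the wrong quantity, and the gaps are not repaired by your surrounding discussion. For the reverse direction you bound $\|f\|_X$ by a constant times $\|f\|_{X_w^q}$ with $q>1$; as you yourself observe, this weighted $q$-power norm cannot be compared to $\|f\|_{X_w}$ at this stage (weighted invariance above exponent $1$ is precisely what is \emph{not} yet known, and is only obtained partially in the last part of the theorem), and your proposed repairs do not work: $c_{1/q,1}$ is the \emph{unweighted} constant $c_{1/q,1}(1)$, not a downward invariance constant for $X_w$, and the first part gives $X\subset X_w$, not the inclusion $X_w\subset X$ you would need. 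The correct move (the paper's) is to apply H\"older to $\Lambda(f,B)^{1/q}=\bigl(\Lambda(f,B)\,w\bigr)^{1/q}w^{-1/q}$, i.e.\ to estimate the \emph{unweighted} quasi-norm $\|f\|_{X^{1/q}}$: this yields $\|f\|_{X^{1/q}}\le[w]_{A_q}\|f\|_{X_w}$ with only the weighted $1$-power norm on the right, so $X_w\subset X^{1/q}=X$ by the hypothesis $X=X^p$ for $p\le1$, and then $\|f\|_X\le c_{1/q,1}\|f\|_{X^{1/q}}\le c_{1/q,1}[w]_{A_q}\|f\|_{X_w}$, where $c_{1/q,1}<\infty$ because the $[1,p_0]$ hypothesis extends to $(0,p_0]$ by Lemma \ref{l:smallpJN}. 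No circularity arises.

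In the last part your splitting of $\int_B\Lambda(f,B)^{p_0/\sigma'}w\,d\mu$ into $\bigl(\int_B\Lambda(f,B)^{p_0}w\,d\mu\bigr)^{1/\sigma'}\bigl(\int_Bw^{\sigma}d\mu\bigr)^{1/\sigma}$ keeps the weight attached to $\Lambda^{p_0}$, and the ensuing claim $\|f\|_{X_w^{p_0}}\lesssim\|f\|_{X^{p_0}}$ is unjustified: the equivalence $X_w\approx X$ established so far is only at exponent $1$, and removing the weight from the $p_0$-integral by a further H\"older step would require unweighted $p$-invariance at exponents strictly beyond $p_0$, which is not hypothesized. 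The exponents must be arranged so that the weight is absorbed entirely by the reverse H\"older factor: with $p=p_0/\sigma'$, apply H\"older with exponent $p_0/p=\sigma'$ on $\Lambda(f,B)^{p}$ alone and $\sigma$ on $w$ alone, giving $\bigl(\int_B\Lambda(f,B)^{p_0}d\mu\bigr)^{1/\sigma'}\bigl(\int_Bw^{\sigma}d\mu\bigr)^{1/\sigma}$; then the unweighted $p_0$-integral is controlled by $c_{1,p_0}\|f\|_X$, the weight factor by $[w]_{RH_\sigma}$, and one closes with $\|f\|_X\le c_{1/q,1}[w]_{A_q}\|f\|_{X_w}$ from the second part. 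Your final assembly step --- noting $p_0/\sigma'>1$ and invoking Lemma \ref{l:smallpJN} to pass from $[1,p_0/\sigma']$ to $(0,p_0/\sigma']$ --- is correct and matches the paper.
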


\begin{proof}
Assume $X(\mathcal B,\Lambda)$ satisfies the $p$-power John-Nirenberg property for $[1,p_0]$ for some $p_0>1$.  Fix $w \in RH_{p_0'}(\mathcal B)$.   Then for $f\in X(\mathcal B,\Lambda)$
\begin{align*}
&\|f\|_{X_w}=\sup_{B\in\mathcal B}\frac{1}{w(B)}\int_B\Lambda(f,B)(x)w(x)d\mu(x)\\
& \leq\sup_{B\in\mathcal B}\frac{\mu(B)}{w(B)}\(\frac{1}{\mu(B)}\int_B\Lambda(f,B)(x)^{p_0}d\mu(x)\)^{\frac{1}{p_0}}  \(\frac{1}{\mu(B)}\int_Bw(x)^{p_0'} d\mu(x)\)^{\frac{1}{p_0'}}\\
&\leq [w]_{RH_{p_0'}} c_{1,p_0} \|f\|_{X}.
\end{align*}
Hence $X(\mathcal B,\Lambda) \subset X_w(\mathcal B,\Lambda)$.

Let $w\in A_q(\mathcal B)$ for some $1<q<\infty$.  For $f\in X_w(\mathcal B,\Lambda)$, it follows that
\begin{align*}
\|f\|_{X^{1/q}}&=\sup_{B\in\mathcal B}\(\frac{1}{\mu(B)}\int_B\Lambda(f,B)(x)^{1/q}w(x)^{1/q}w(x)^{-1/q}d\mu(x)\)^q\\
&\leq\sup_{B\in\mathcal B}\(\frac{1}{w(B)}\int_B\Lambda(f,B)(x)w(x)d\mu(x)\)  \times \\
& \hspace{3cm} \(\frac{w(B)}{\mu(B)}\)\(\frac{1}{\mu(B)}\int_Bw(x)^{-q'/q}d\mu(x)\)^{q/q'}\\
& \leq [w]_{A_q}\|f\|_{X_w},
\end{align*}
and so $f\in X^{1/q}(\mathcal B,\Lambda)$. By assumption $X^{1/q}(\mathcal B,\Lambda)=X(\mathcal B,\Lambda)$, and 
therefore $X(\mathcal B,\Lambda)=X_w(\mathcal B,\Lambda)$ for all $w\in RH_{p_0'}(\mathcal B)$.  It also follows that 
\begin{align*}
\|f\|_X\leq c_{1/q,1}\|f\|_{X^{1/q}}\leq c_{1/q,1}[w]_{A_q}\|f\|_{X_w}.
\end{align*}

By modifying some of the arguments we can obtain the $p$-invariance of $X_w(\mathcal B,\Lambda)$.  Indeed, let $1< p<p_0$ and $w\in RH_{\frac{p_0}{p_0-p}}(\mathcal B)$.  Note that  $$\(\frac{p_0}{p}\)'=\frac{p_0}{p_0-p}> p_0'.$$  Hence $w\in RH_{p_0'}$ as well, and so the first part of this theorem is applicable here.   Then for $f\in X(\mathcal B,\Lambda)= X_w(\mathcal B,\Lambda)$, 
\begin{align*}
\|f\|_{X_w^p} & =\sup_{B\in\mathcal B}\(\frac{1}{w(B)}\int_B\Lambda(f,B)(x)^pw(x)d\mu(x)\)^{\frac{1}{p}}\\
& \leq\sup_{B\in\mathcal B}  \(\frac{\mu(B)}{w(B)} \)^{\frac{1}{p}}
\(\frac{1}{\mu(B)}\int_B\Lambda(f,B)(x)^{p_0}d\mu(x)\)^{\frac{1}{p_0}}  \times\\
& \hspace{55mm}  \(\frac{1}{\mu(B)}\int_Bw(x)^\frac{p_0}{p_0-p} d\mu(x)\)^{\frac{p_0-p}{p_0 p}}\\
& \leq  [w]_{RH_\frac{p_0}{p_0-p}}^{\frac{1}{p}} c_{1,p_0} \|f\|_{X}\\
&\leq c_{1/q,1}  c_{1,p_0} [w]_{RH_\frac{p_0}{p_0-p}}^{\frac{1}{p}} [w]_{A_q}\|f\|_{X_w}.
\end{align*}
Therefore $X_w(\mathcal B,\Lambda)$ satisfies the $p$-power John-Nirenberg property on $[1,p]$ when $w\in RH_\frac{p_0}{p_0-p}$, and by Lemma \ref{l:smallpJN} it does so on $(0,p]$ too.
\end{proof}

We now show that the full $p$-invariance for one weight, implies $p$-invariance and weight invariance for all weights in $A_\infty(\mathcal B)$.

\begin{theorem}\label{necessity}
Suppose $\mu$ and $\mathcal B$ satisfy the assumption {\it A2}.  
Assume that for some $w_0\in A_\infty(\mathcal B)$, $X_{w_0}(\mathcal B,\Lambda)$ satisfies the $p$-power John-Nirenberg property for the interval $[1,\infty)$, and that $X_{w_0}^p(\mathcal B,\Lambda)=X_{w_0}(\mathcal B,\Lambda)$ for all $0<p< 1$.  Then $X(\mathcal B,\Lambda)$ satisfies the weight invariance property for $A_\infty(\mathcal B)$, and $X_w(\mathcal B,\Lambda)$ satisfies the $p$-power John-Nirenberg property for $(0,\infty)$ for every $w\in A_\infty(\mathcal B)$.  Furthermore, if $w_0\in A_p(\mathcal B)\cap RH_\sigma(\mathcal B)$ and $w\in A_q(\mathcal B)\cap RH_\delta(\mathcal B)$ for $1<p,q,\delta,\sigma<\infty$, then 
\begin{align*}
&b_{w_0,w}\leq c_{1,p\delta'}(w_0)[w_0]_{A_p}^\frac{1}{p\delta'}[w]_{RH_\delta},\\
&b_{w,w_0}\leq c_{(q\sigma')^{-1},1}(w_0)[w]_{A_q}[w_0]_{RH_\sigma}^{q\sigma'},\text{ and}\\
&c_{1,t}(w)\leq c_{1,tp\delta'}(w_0)b_{w,w_0}[w_0]_{A_p}^\frac{1}{p\delta't}[w]_{RH_\delta}^\frac{1}{t}
\end{align*}
for all $1<t<\infty$.
\end{theorem}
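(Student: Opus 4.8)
The plan is to reduce the whole statement to two one‑line consequences of H\"older's inequality — precisely the two maneuvers already used in the proof of Theorem~\ref{smallnecessity}, written now for arbitrary powers — and then chain them with the John--Nirenberg hypothesis on $X_{w_0}$ and with Lemma~\ref{l:smallpJN}. First I would record the two ``transfer'' estimates, valid for every $f\in\mathfrak F$ with no membership assumed. If $u\in RH_\delta(\mathcal B)$, then splitting $\Lambda(f,B)^t u=\Lambda(f,B)^t\cdot u$ inside each $B$, applying H\"older with exponents $\delta',\delta$, and then the reverse H\"older inequality for $u$,
\begin{align*}
\|f\|_{X_u^t}\le [u]_{RH_\delta}^{1/t}\,\|f\|_{X^{t\delta'}}\qquad (0<t<\infty);
\end{align*}
and if $u\in A_p(\mathcal B)$, then splitting $\Lambda(f,B)^s=(\Lambda(f,B)^s u^{1/p})\cdot u^{-1/p}$, applying H\"older with exponents $p,p'$, and then the $A_p$ condition,
\begin{align*}
\|f\|_{X^s}\le [u]_{A_p}^{1/(sp)}\,\|f\|_{X_u^{sp}}\qquad (0<s<\infty).
\end{align*}

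Next I would compose these with the hypotheses on $X_{w_0}$. For the inclusion $X_{w_0}\subset X_w$ and the bound on $b_{w_0,w}$, assume $w_0\in A_p$ and $w\in RH_\delta$: the first estimate with $u=w$, $t=1$ followed by the second with $u=w_0$, $s=\delta'$ gives $\|f\|_{X_w}\le [w]_{RH_\delta}[w_0]_{A_p}^{1/(p\delta')}\|f\|_{X_{w_0}^{p\delta'}}$, and since $p\delta'>1$ the John--Nirenberg property of $X_{w_0}$ on $[1,\infty)$ controls the last factor by $c_{1,p\delta'}(w_0)\|f\|_{X_{w_0}}$ whenever $f\in X_{w_0}$. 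For the inclusion $X_w\subset X_{w_0}$ and the bound on $b_{w,w_0}$, assume $w\in A_q$ and $w_0\in RH_\sigma$: the first estimate with $u=w_0$, $t=1/(q\sigma')$ (so $t\sigma'=1/q$) followed by the second with $u=w$, $p=q$, $s=1/q$ gives $\|f\|_{X_{w_0}^{1/(q\sigma')}}\le [w_0]_{RH_\sigma}^{q\sigma'}[w]_{A_q}\|f\|_{X_w}$; here $1/(q\sigma')<1$, so the hypothesis $X_{w_0}^{1/(q\sigma')}=X_{w_0}$ forces $f\in X_w\Rightarrow f\in X_{w_0}$, and the accompanying norm comparison $\|f\|_{X_{w_0}}\le c_{(q\sigma')^{-1},1}(w_0)\|f\|_{X_{w_0}^{1/(q\sigma')}}$ closes the estimate. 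The third displayed bound is obtained exactly like the first: the first estimate with $u=w$ and power $t$, then the second with $u=w_0$ and $s=t\delta'$, then John--Nirenberg for $X_{w_0}$ (legitimate since $pt\delta'>1$), then $\|f\|_{X_{w_0}}\le b_{w,w_0}\|f\|_{X_w}$.

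To globalize, note that assumption \textit{A2} together with the definition of $A_\infty$ guarantees that every $v\in A_\infty(\mathcal B)$ lies in $A_q\cap RH_\delta$ for some $1<q,\delta<\infty$, and likewise $w_0\in A_p\cap RH_\sigma$ for some $1<p,\sigma<\infty$; applying the two inclusions above with $w=v$ gives $X_v=X_{w_0}$ for all $v\in A_\infty(\mathcal B)$, which is the asserted weight invariance, with $b_{w,v}\le b_{w,w_0}b_{w_0,v}$. Finally, for a fixed $w\in A_\infty(\mathcal B)$ the third bound already yields $c_{1,t}(w)<\infty$ for all $t>1$; combined with $\|f\|_{X_w}\le\|f\|_{X_w^r}$ for $r\ge1$ (Jensen), this says $X_w$ satisfies the $p$-power John--Nirenberg property on $[1,p_0]$ for every $p_0>1$, and Lemma~\ref{l:smallpJN} applied with $q_0=1$ promotes this to $(0,p_0]$, hence to $(0,\infty)$.

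The argument is not deep; the two places that demand care are the bookkeeping of exponents, so that the three constants come out exactly as stated, and the choice of which hypothesis on $X_{w_0}$ to invoke in each direction: the John--Nirenberg property on $[1,\infty)$ suffices for $X_{w_0}\subset X_w$, where one starts from $f\in X_{w_0}$, whereas the reverse inclusion $X_w\subset X_{w_0}$ genuinely requires the \emph{set} equality $X_{w_0}^p=X_{w_0}$ for $p<1$, since there one only controls a sub‑one power of $\Lambda(f,\cdot)$ before invoking it.
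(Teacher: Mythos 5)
Your proposal is correct and follows essentially the same route as the paper: your two ``transfer'' estimates (weighted-to-unweighted via the reverse H\"older condition, unweighted-to-weighted via the $A_p$ condition) are exactly the paper's single chained H\"older application with exponents $r=p\delta'$ (resp.\ $r=q\sigma'$) and auxiliary $s$ chosen so that $sr'/r=p'/p$ and $s'r'=\delta$, merely factored through the intermediate unweighted spaces $X^{\delta'}$, $X^{1/q}$, and the composed constants agree with the stated bounds. Your explicit appeal to Lemma~\ref{l:smallpJN} (with $q_0=1$) to pass from $[1,\infty)$ to $(0,\infty)$, and to make sense of $c_{(q\sigma')^{-1},1}(w_0)$, is the same step the paper leaves implicit, so nothing is missing.
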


\begin{proof}
Assume that $w_0\in A_\infty(\mathcal B)$, that $X_{w_0}(\mathcal B,\Lambda)$ satisfies the $p$-power John-Nirenberg inequality for the interval $[1,\infty)$, and that $X_{w_0}^p(\mathcal B,\Lambda)=X_{w_0}(\mathcal B,\Lambda)$ for all $0<p<1$.  In particular,  by assumption {\it A2},  $w_0\in A_{p}(\mathcal B) \cap RH_\sigma(\mathcal B)$ for some $1<p, \sigma<\infty$.  Also let $w\in A_\infty(\mathcal B)$ with $w\in A_q(\mathcal B) \cap RH_\delta(\mathcal B)$ for some $1<q, \delta<\infty$, where again such $\delta$ exists by assumption {\it A2}.  Define 
$$
r=p\delta'>1 \text{\, and \,} s=\delta'+\frac{p'\delta'}{p\delta}>1.
$$  
These numbers are chosen so that 
$$
sr'/r=p'/p \text{\, and \,}s'r'=\delta.  
$$
Then for any $f\in X_{w_0}(\mathcal B,\Lambda)$ we have
\begin{align*}
& \|f\|_{X_w}=\sup_{B\in\mathcal B}\frac{1}{w(B)}\int_B\Lambda(f,B)(x)w_0(x)^{1/r}w_0(x)^{-1/r}w(x)d\mu(x)\\
&\leq\sup_{B\in\mathcal B}\frac{1}{w(B)}\(\int_B\Lambda(f,B)(x)^rw_0(x)d\mu(x)\)^{\frac{1}{r}}\!\!
\(\int_Bw_0(x)^{-r'/r}w(x)^{r'}d\mu(x)\)^{\frac{1}{r'}}\\
&\leq\|f\|_{X_{w_0}^r}\sup_{B\in\mathcal B}\frac{w_0(B)^{1/r}}{w(B)}\(\int_Bw_0(x)^{-sr'/r}d\mu(x)\)^\frac{1}{sr'}\(\int_Bw(x)^{s'r'}d\mu(x)\)^{\frac{1}{s'r'}}\\
&\leq c_{1,r}(w_0)\|f\|_{X_{w_0}}\sup_{B\in\mathcal B}\frac{\mu(B)}{w(B)}\frac{w_0(B)^{1/r}}{\mu(B)^{1/r}}\\
&\hspace{2cm} \times \(\frac{1}{\mu(B)}\int_Bw_0(x)^{-p'/p}d\mu(x)\)^\frac{p}{p'r}\(\frac{1}{\mu(B)}\int_Bw(x)^{\delta}d\mu(x)\)^{\frac{1}{\delta}}\\
&\leq c_{1,r}(w_0)[w_0]_{A_p}^\frac{1}{r}[w]_{RH_\delta}\|f\|_{X_{w_0}}=c_{1,p\delta'}(w_0)[w_0]_{A_p}^\frac{1}{p\delta'}[w]_{RH_\delta}\|f\|_{X_{w_0}}
\end{align*}
Therefore $X_{w_0}(\mathcal B,\Lambda)\subset X_w(\mathcal B,\Lambda)$ and 
$$b_{w_0,w}\leq c_{1,p\delta'}(w_0)[w_0]_{A_p}^\frac{1}{p\delta'}[w]_{RH_\delta}.$$
 Define 
 $$
 r=q\sigma'>1\text{\, and \,}  s=\sigma'+\frac{q'\sigma'}{q\sigma}>1,
 $$
where now these numbers are chosen so that 
$$
sr'/r=q'/q \text{\, and \,}s'r'=\sigma. 
$$
It follows that, for $f\in X_w(\mathcal B,\Lambda)$, 
\begin{align*}
& \|f\|_{X_{w_0}^{1/r}}=\sup_{B\in\mathcal B}\(\frac{1}{w_0(B)}\int_B\Lambda(f,B)(x)^{1/r}w_0(x)w(x)^{1/r}w(x)^{-1/r}d\mu(x)\)^r\\
&\leq \sup_{B\in\mathcal B}\frac{1}{w_0(B)^r}\(\int_B\Lambda(f,B)(x)w(x)d\mu(x)\)\(\int_Bw_0(x)^{r'}w(x)^{-r'/r}d\mu(x)\)^{r/r'}\\
&\leq \|f\|_{X_w}\sup_{B\in\mathcal B}\frac{\mu(B)^{r-1}w(B)}{w_0(B)^r}\\
& \hspace{2cm} \times \(\frac{1}{\mu(B)}\int_Bw_0(x)^\sigma d\mu(x)\)^{r/\sigma}\(\frac{1}{\mu(B)}\int_Bw(x)^{-q'/q}d\mu(x)\)^{q/q'}\\
&\leq \|f\|_{X_w}\sup_{B\in\mathcal B}\frac{\mu(B)^{r-1}w(B)}{w_0(B)^r}\([w_0]_{RH_\sigma}\frac{w_0(B)}{\mu(B)}\)^r\([w]_{A_q}\frac{\mu(B)}{w(B)}\)\\
&\leq [w]_{A_q}[w_0]_{RH_\sigma}^r\|f\|_{X_w}=[w]_{A_q}[w_0]_{RH_\sigma}^{q\sigma'}\|f\|_{X_w}.
\end{align*}
Therefore $X_w(\mathcal B,\Lambda)\subset X^{1/r}_{w_0}(\mathcal B,\Lambda)$.  By assumption, $X_{w_0}(\mathcal B,\Lambda)=X^{1/r}_{w_0}(\mathcal B,\Lambda)$, and hence it follows that $X_{w_0}(\mathcal B,\Lambda)=X_w(\mathcal B,\Lambda)$.  It also follows that 
$$b_{w,w_0}\leq c_{(q\sigma')^{-1},1}[w]_{A_q}[w_0]_{RH_\sigma}^{q\sigma'}.$$

It remains to be shown that for any  $w\in A_\infty(\mathcal B)$,  $X_{w}(\mathcal B,\Lambda)$ is  also $p$-invariant. So let again 
 $w_0\in A_p(\mathcal B)\cap RH_\sigma(\mathcal B)$ and $w\in A_q(\mathcal B)\cap RH_\delta(\mathcal B)$ for $1<p,q,\delta,\sigma<\infty$, and
 $$r=p\delta' \text {\, and \,} s=1+p'(\delta'-1),$$
 so that as before
 $$sr'/r=p'/p \text {\, and \,} s'r'=\delta.$$
   Fix $1<t<\infty$ and $f\in X_{w}(\mathcal B,\Lambda)$. We already know that $X_w(\mathcal B,\Lambda)= X_{w_0}(\mathcal B,\Lambda)$ and that 
   $X_{w_0}(\mathcal B,\Lambda)$ is $p$-invariant for the interval $[1,\infty)$, hence  
\begin{align*}
& \frac{1}{w(B)}\int_B\Lambda(f,B)(x)^tw(x)d\mu(x)\\
&\leq\frac{1}{w(B)}\(\int_B\Lambda(f,B)(x)^{tr}w_0(x)d\mu(x)\)^{1/r}\(\int_Bw_0(x)^{-r'/r}w(x)^{r'} d\mu(x)\)^{1/r'}\\
&\leq c_{1,tr}(w_0)^t\|f\|_{X_{w_0}}^t\frac{w_0(B)^{1/r}}{w(B)} \\
& \hspace{2cm} \times \(\int_Bw_0(x)^{-sr'/r} d\mu(x)\)^{1/(sr')}\(\int_Bw(x)^{s'r'} d\mu(x)\)^{1/(s'r')}\\
&\leq c_{1,tr}(w_0)^tb_{w,w_0}^t\|f\|_{X_{w}}^t\frac{w_0(B)^{1/r}}{w(B)} \\
& \hspace{2.5cm} \times \(\int_Bw_0(x)^{-p'/p} d\mu(x)\)^{p/(rp')}\(\int_Bw(x)^{\delta} d\mu(x)\)^{1/\delta}\\
&\leq c_{1,tr}(w_0)^tb_{w,w_0}^t\|f\|_{X_{w}}^t[w_0]_{A_p}^\frac{1}{p\delta'}[w]_{RH_\delta}.
\end{align*}
Therefore $c_{1,t}(w)\leq c_{1,tp\delta'}(w_0)b_{w,w_0}[w_0]_{A_p}^\frac{1}{p\delta't}[w]_{RH_\delta}^\frac{1}{t}$.
\end{proof}

In the next result we impose more structure on the behavior of $\mu$ and $\mathcal B$ through the assumption {\it A4}.  In this way, we are able to ensure that $\Psi_p$, as defined in \eqref{Psi}, is finite when $X(\mathcal B,\Lambda)$ satisfies the $p$-power John-Nirenberg property. The proposition below essentially says that if we can control in a quantitative way the reverse H\"older constant for a weight by its $A_p$ norm, then we can also quantify the $b_{1,w}$ constants in the weight variance estimates.

\begin{proposition}\label{c:Psiestimate}
Suppose $\mu$ and $\mathcal B$ satisfy the assumption {\it A4}.  If 
$X(\mathcal B,\Lambda)$ satisfies the $p$-power John-Nirenberg property for $[1,\infty)$, then 
$$\Psi_p(t)\leq K(p,t)c_{1,\Delta (p,t)'} $$
 for all $1<p<\infty$.
\end{proposition}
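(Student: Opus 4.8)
The plan is to fix an arbitrary weight $v\in A_p(\mathcal B)$ with $[v]_{A_p}\le t$, bound the invariance constant $b_{1,v}$ by a quantity whose parameters are evaluated at $[v]_{A_p}$, and then pass to the supremum using the monotonicity built into assumption \emph{A4}.

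First I would feed $v$ into assumption \emph{A4}: setting $\delta:=\Delta(p,[v]_{A_p})>1$, it produces both the membership $v\in RH_{\delta}(\mathcal B)$ and the quantitative control $[v]_{RH_{\delta}}\le K(p,[v]_{A_p})$. Then I would reuse verbatim the first H\"older estimate in the proof of Theorem \ref{smallnecessity}: since $X(\mathcal B,\Lambda)$ is $p$-invariant on all of $[1,\infty)$, it is in particular $p$-invariant on $[1,\delta']$, so for every $f\in X(\mathcal B,\Lambda)$ one gets
\begin{align*}
\|f\|_{X_v}\le [v]_{RH_{\delta}}\,c_{1,\delta'}\,\|f\|_X\le K(p,[v]_{A_p})\,c_{1,\Delta(p,[v]_{A_p})'}\,\|f\|_X .
\end{align*}
By the minimality of $b_{1,v}$ this gives $b_{1,v}\le K(p,[v]_{A_p})\,c_{1,\Delta(p,[v]_{A_p})'}$, and the right-hand side is finite because $\delta'\in(1,\infty)$ while $X$ is $p$-invariant on $[1,\infty)$.

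It remains to let $[v]_{A_p}$ run up to $t$. Since $K$ is non-decreasing in its second slot, $K(p,[v]_{A_p})\le K(p,t)$. Since $\Delta$ is non-increasing in its second slot and $x\mapsto x'=x/(x-1)$ is decreasing on $(1,\infty)$, the map $s\mapsto\Delta(p,s)'$ is non-decreasing; combined with the fact that $q\mapsto c_{1,q}$ is non-decreasing, this yields $c_{1,\Delta(p,[v]_{A_p})'}\le c_{1,\Delta(p,t)'}$. Hence $b_{1,v}\le K(p,t)\,c_{1,\Delta(p,t)'}$ for every admissible $v$, and taking the supremum over such $v$ gives $\Psi_p(t)\le K(p,t)\,c_{1,\Delta(p,t)'}$.

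There is no genuinely hard step here; the only point requiring a short separate argument --- and the closest thing to an obstacle --- is the monotonicity of $q\mapsto c_{1,q}$, which is not among the listed hypotheses. I would justify it by Jensen's inequality: for each $B\in\mathcal B$ and each $f$, the quantity $\bigl(\tfrac{1}{\mu(B)}\int_B\Lambda(f,B)^q\,d\mu\bigr)^{1/q}$ is non-decreasing in $q$, hence $\|f\|_{X^{q_1}}\le\|f\|_{X^{q_2}}$ whenever $q_1\le q_2$, and therefore $c_{1,q_1}\le c_{1,q_2}$. Everything else is bookkeeping to make sure each monotonicity points in the direction that makes the bound grow when $[v]_{A_p}$ is replaced by the larger value $t$.
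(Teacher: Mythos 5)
Your proof is correct and follows essentially the same route as the paper: one application of H\"older's inequality with the reverse H\"older exponent supplied by \emph{A4}, combined with the $p$-invariance constant and the monotonicity of $\Delta$ and $K$. The only cosmetic difference is that the paper first downgrades $v$ to the uniform class $RH_{\Delta(p,t)}$ (using the nesting of reverse H\"older classes) so that the exponent $\Delta(p,t)'$ is fixed before estimating, whereas you estimate with the weight-dependent exponent $\Delta(p,[v]_{A_p})'$ and then invoke the (correctly justified, via Jensen) monotonicity of $q\mapsto c_{1,q}$; both variants yield the stated bound.
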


\begin{proof}
Let $1<p<\infty$, $w\in A_p(\mathcal B)$ with $[w]_{A_p}\leq t$.  By  {\it A4}, we have that $w\in RH_{\Delta (p,[w]_{A_p})} $ and
$ [w]_{RH_{\Delta (p,[w]_{A_p})}} \leq   K(p, [w]_{A_p})$. Since $\Delta$ is non-increasing in each variable, it follows that $RH_{\Delta (p,[w]_{A_p})} \subset 
RH_{\Delta (p,t)}$ and $ [w]_{RH_{\Delta (p,t)}} \leq    [w]_{RH_{\Delta (p,[w]_{A_p})}}$.  
However, since $K$ is non-decreasing in each variable $K(p, [w]_{A_p})  \leq  K(p, t)$, and hence 
$ [w]_{RH_{\Delta (p,t)}} \leq   K(p,t)$. 
Therefore,
\begin{align*}
& \frac{1}{w(B)}\int_B\Lambda(f,B)(x)w(x)d\mu(x)\\
&\leq\frac{\mu(B)}{w(B)}\(\frac{1}{\mu(B)}\int_B\Lambda(f,B)(x)^{\Delta (p,t)'}d\mu\)^\frac{1}{\Delta (p,t)'}  \!\!    \(\frac{1}{\mu(B)}\int_Bw(x)^{\Delta (p,t)}d\mu\)^\frac{1}{\Delta (p,t)}\\
&\leq c_{1,\Delta (p,t)'}\|f\|_X[w]_{RH_{\Delta (p,t)}}\leq c_{1,\Delta (p,t)'} K(p,t)\|f\|_X.
\end{align*}
It follows that $b_{1,w}\leq K(p,t)c_{1,\Delta (p,t)'}$ for all $w$ as specified above and hence
 $$\Psi_p(t)\leq c_{1,\Delta (p,t)'} K(p,t).$$
\end{proof}

\begin{remark}
{\rm
We compare Proposition \ref{c:Psiestimate} applied to the traditional John-Nirenberg $BMO$ space to some estimates proved in \cite[Theorem 1.19]{HP}, which is one of the few articles we know of that track the constants for such inequalities.  The authors in \cite{HP}  proved that $\|f\|_{BMO_w}\leq c[w]_{A_\infty}'\|f\|_{BMO}$ for some dimensional constant $c>0$, where 
$$[w]_{A_\infty}'=\sup_{Q\in\mathcal Q}\frac{1}{w(Q)}\int_Q\mathcal M(\chi_Qw)(x)dx$$
and $\mathcal M$ is the standard Hardy-Littlewood maximal operator.  They showed that this constant is sharp in terms of the power on the weight character, in the sense that one cannot obtain this estimate with $([w]_{A_\infty}')^\epsilon$ in place of $[w]_{A_\infty}'$ for any $0<\epsilon<1$ (in fact, they showed something slightly better).  In terms of our notation, this says that $b_{1,w}\leq c[w]_{A_\infty}'$.
In this setting, we apply Proposition \ref{c:Psiestimate} with $\Delta(p,t)=1+\frac{1}{2^{n+1}t-1}$ and $K(p,t)=2$ as in Remark \ref{Delta+K}, which provides the estimate $\Psi_p(t)\lesssim t$ for all $1<p<\infty$ and $t\geq1$, since it is known that $c_{1,p}\lesssim p$ for all $1<p<\infty$.  Then for any $w\in A_p$, it follows that $b_{1,w}\leq\Psi_p([w]_{A_p})\lesssim [w]_{A_p}$.  We fail to recover the $A_\infty$ constant of \cite[Theorem 1.19]{HP}, but we do recover the linear dependence on the power of the weight constant.  Since we have not considered any $A_\infty$ constants in this work, aside from the current remark, this is the best possible result for Proposition \ref{c:Psiestimate}.  We will see in Section 5 other applications of Proposition \ref{c:Psiestimate} in other settings, where the results are new and obtain the same linear dependence on the $A_p$ weight character.
}
\end{remark}

\begin{remark}\label{r:Xp=X}
{\rm
The somehow artificially imposed assumption $X(\mathcal B,\Lambda)=X^p(\mathcal B,\Lambda)$ for $0<p<1$ in the hypotheses of the above results can be eliminated in some situations.  One of them is when $\mathfrak F=X(\mathcal B,\Lambda)$ in Theorem \ref{smallnecessity} or 
 $\mathfrak F=X_{w_0}(\mathcal B,\Lambda)$ in Theorem \ref{necessity}. This 
immediately forces $X(\mathcal B,\Lambda)=X^p(\mathcal B,\Lambda)$, respectively  $X_{w_0}(\mathcal B,\Lambda)=X_{w_0}^p(\mathcal B,\Lambda)$, for $0<p\leq1$. Indeed, for any $w$, by definition $X_w^p(\mathcal B,\Lambda)\subset \mathfrak F$ while  $X_w(\mathcal B,\Lambda) \subset X_w^p(\mathcal B,\Lambda)$ always holds in the range $0<p\leq 1$. 

Alternatively, $X_w(\mathcal B,\Lambda)=X_w^p(\mathcal B,\Lambda)$ for $0<p\leq 1$ holds true if more structure on $X_w$ is assumed. More precisely, suppose that $\|\cdot\|_{X_w^p}$ is a quasi-norm, $X_w^p(\mathcal B,\Lambda)$ endowed with it is a quasi-Banach space for all $0<p\leq1$, and  $X_w(\mathcal B,\Lambda)$ is dense in $X_w^p(\mathcal B,\Lambda)$ with respect to $\|\cdot\|_{X_w^p}$ also for all $0<p<1$.  If this is the case and $X_w(\mathcal B,\Lambda)$ satisfies the $p$-invariance property for $(0,1]$, then clearly again $X_w(\mathcal B,\Lambda)=X_w^p(\mathcal B,\Lambda)$ for $0<p\leq 1$. 

Finally, in the classical $BMO$ context, very general conditions under which
 \begin{align*}
\sup_{Q}\frac{1}{|Q|}\int_Q h(|f(x)- f_Q|)\,dx<\infty
\end{align*}
for an appropriate function $h$ implies $f\in BMO$ were given by Str\"omberg \cite{stro}, Lo and Ruilin \cite{LR}, and  Shi and Torchinsky \cite{ST}. See also the more recent work of  Logunov et al in \cite{LSSVZ}. Hence, if $X=BMO$ and $\Lambda(f,Q)^p=h((|f(x)- f_Q|)$ for a certain such appropriate function, then one can conclude that $X^p=X$ for $0<p<1$. We will adapt some of these works to several applications we present in Section 5.
}
\end{remark}

\section{Necessary Conditions for Weight Invariance}

In this section, we provide a partial converse to the results from the previous one.  Although in applications we will obtain weight invariance from known $p$-invariance results, it is natural to ask whether the two concepts are actually equivalent. We indeed show that, essentially,  if we have weight invariance estimates depending only on the norm of the weights, then $p$-power John-Nirenberg properties also hold. Assumptions {\it A3} and {\it A4} and the function $\Psi_p$ defined in \eqref{Psi} play a pivotal role.  They allow us to perform  several computations to estimate $\|f\|_{X^p}$, and additional estimates for $\Psi_p$ impose the right control in terms of the weights.  In the end, we will be able to estimate $c_{1,p}$ using $\Psi_p$ as stated in equation \eqref{c1p} of Theorem \ref{sufficiency} below, which when combined with Proposition \ref{c:Psiestimate} provides a precise quantitative way to associate the $p$-invariance and weight invariance through the constants $c_{1,p}$ and $\Psi_p(t)$.

The following lemma is likely known in many settings. We include the computations just to show that it does not depend on any particular property of the measure,  the family of  sets $\mathcal B$, or its associated maximal function.

\begin{lemma}\label{lemmasufficiency}
If $u\in A_p(\mathcal B)\cap RH_\delta(\mathcal B)$ for some $1<p,\delta<\infty$, then $u^\delta\in A_q(\mathcal B)$ and $[u^\delta]_{A_q}\leq[u]_{RH_\delta}^\delta[u]_{A_p}^\delta$ where $q=1+\delta( p-1)$.
\end{lemma}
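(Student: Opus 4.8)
The plan is to verify the $A_q(\mathcal B)$ condition for $u^\delta$ directly from the definitions, using the reverse Hölder inequality for $u$ to convert the $L^\delta$-average of $u$ appearing in $[u^\delta]_{A_q}$ into an ordinary $L^1$-average, and then recognizing the resulting expression as the $A_p$-functional of $u$. First I would fix $B\in\mathcal B$ and compute the first factor in the $A_q$ quotient: by the definition of $RH_\delta(\mathcal B)$,
\begin{align*}
\frac{1}{\mu(B)}\int_B u(x)^\delta\, d\mu(x)\leq [u]_{RH_\delta}^\delta\(\frac{1}{\mu(B)}\int_B u(x)\, d\mu(x)\)^\delta.
\end{align*}
So the burden shifts entirely to the second factor, $\(\frac{1}{\mu(B)}\int_B u^{\delta\cdot(-q'/q)}\, d\mu\)^{q/q'}$, and the key is to choose $q$ so that the exponents line up with the $A_p$ condition for $u$.

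Next I would unwind the exponent arithmetic. With $q=1+\delta(p-1)$ one has $q-1=\delta(p-1)$, hence
\begin{align*}
\frac{q'}{q}=\frac{1}{q-1}=\frac{1}{\delta(p-1)}=\frac{1}{\delta}\cdot\frac{p'}{p},
\end{align*}
so $\delta\cdot\frac{q'}{q}=\frac{p'}{p}$, which means $u^{-\delta q'/q}=u^{-p'/p}$; this is precisely the point of the choice of $q$. Therefore
\begin{align*}
\(\frac{1}{\mu(B)}\int_B u(x)^{-\delta q'/q}\, d\mu(x)\)^{q/q'}=\(\frac{1}{\mu(B)}\int_B u(x)^{-p'/p}\, d\mu(x)\)^{q/q'}.
\end{align*}
Since $\frac{q}{q'}=q-1=\delta(p-1)=\delta\cdot\frac{p}{p'}$, this last quantity equals $\(\frac{1}{\mu(B)}\int_B u^{-p'/p}\, d\mu\)^{\delta p/p'}$, i.e. the $\delta$-th power of the factor $\(\frac{1}{\mu(B)}\int_B u^{-p'/p}\, d\mu\)^{p/p'}$ appearing in $[u]_{A_p}$.

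Finally I would multiply the two estimates and take the supremum over $B\in\mathcal B$: combining the two displays gives
\begin{align*}
\(\frac{1}{\mu(B)}\int_B u^\delta\, d\mu\)\(\frac{1}{\mu(B)}\int_B u^{-\delta q'/q}\, d\mu\)^{q/q'}\leq [u]_{RH_\delta}^\delta\(\frac{1}{\mu(B)}\int_B u\, d\mu\)^\delta\(\frac{1}{\mu(B)}\int_B u^{-p'/p}\, d\mu\)^{\delta p/p'},
\end{align*}
and the right-hand side is $[u]_{RH_\delta}^\delta$ times the $\delta$-th power of the $A_p$-quotient of $u$, hence bounded by $[u]_{RH_\delta}^\delta[u]_{A_p}^\delta$. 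Taking the supremum over $B$ yields both $u^\delta\in A_q(\mathcal B)$ and the stated bound $[u^\delta]_{A_q}\leq[u]_{RH_\delta}^\delta[u]_{A_p}^\delta$. This argument is essentially bookkeeping; the only place requiring care — and the step I expect to be the main (though mild) obstacle — is keeping the exponent arithmetic straight so that the choice $q=1+\delta(p-1)$ really does produce the identity $\delta q'/q=p'/p$ and the matching power $q/q'=\delta p/p'$. No property of $\mu$, $\mathcal B$, or $\mathcal M_{\mathcal B}$ beyond the definitions of $A_p(\mathcal B)$ and $RH_\delta(\mathcal B)$ is used.
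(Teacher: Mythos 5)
Your proof is correct and follows essentially the same route as the paper: apply the reverse H\"older inequality to the first factor of the $A_q$ quotient and observe that the choice $q=1+\delta(p-1)$ makes the second factor exactly the $\delta$-th power of the corresponding $A_p$ factor (the exponent identities $\delta q'/q=p'/p$ and $q/q'=\delta p/p'$ all check out). Nothing is missing.
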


\begin{proof}
Let $u\in A_p(\mathcal B)\cap RH_\delta(\mathcal B)$ for some $1<\delta,p<\infty$, and define $q=1+\delta( p-1)$.  Note that with this selection we have $q'=\frac{p'}{\delta p}+1$, and $\frac{q'}{q} =\frac{p'}{\delta p}$.  Then it follows that
\begin{align*}
&\(\frac{1}{\mu(B)}\int_Bu(x)^\delta d\mu(x)\)\(\frac{1}{\mu(B)}\int_Bu(x)^{-\delta q'/q}d\mu(x)\)^{q/q'}\\
&\leq[u]_{RH_\delta}^\delta\(\frac{1}{\mu(B)}\int_Bu(x)\,d\mu(x)\)^\delta\(\frac{1}{\mu(B)}\int_Bu(x)^{- {p'/p}}d\mu(x)\)^{\delta p/p'}\\
& \leq[u]_{RH_\delta}^\delta[u]_{A_p}^\delta
\end{align*}
for all $B\in\mathcal B$.  Therefore $u^\delta\in A_q(\mathcal B)$ with $[u^\delta]_{A_q}\leq[u]_{RH_\delta}^\delta[u]_{A_p}^\delta$.
\end{proof}

The next theorem shows that if $X(\mathcal B,\Lambda)$ is weight invariant for the class $\mathcal W= A_p$ with constants controlling the ``norm" equivalences depending only on the characteristic of the weights, then $X(\mathcal B,\Lambda)$ is also $p$-invariant. More precisely,

\begin{theorem}\label{sufficiency}
Suppose $\mu$, $\mathcal B$, and $\mathcal M_{\mathcal B}$ satisfy the assumptions {\it A1}--{\it A4}.  Assume there exists $1<p_0<\infty$ such that $X(\mathcal B,\Lambda)$ satisfies the weight invariance property for $A_{p_0}(\mathcal B)$ and that $\Psi_p(2\|\mathcal M_\mathcal B\|_{L^p,L^p})$ is finite for all $1<p< p_0$.  Then $X(\mathcal B,\Lambda)$ satisfies the $p$-power John-Nirenberg property for the interval $(0,\infty)$.  Moreover, it also holds that 
\begin{align}
c_{1,p}\leq2 \Psi_{p'}(2\|\mathcal M_\mathcal B\|_{L^{p'},L^{p'}})<\infty\label{c1p}
\end{align}
for all  $1< p <\infty$.
\end{theorem}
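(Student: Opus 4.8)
The plan is to run a duality/self-improvement argument that produces $X \subset X^p$ with a controlled constant, and then invoke Remark~\ref{pinvremark} and Lemma~\ref{l:smallpJN} to upgrade this to full $p$-invariance on $(0,\infty)$. The heart of the matter is estimating $\|f\|_{X^p}$ for $1<p<\infty$, i.e.\ bounding $\left(\frac{1}{\mu(B)}\int_B \Lambda(f,B)^p\,d\mu\right)^{1/p}$ uniformly over $B\in\mathcal B$ by $\|f\|_X$. Fix $B\in\mathcal B$. By duality in $L^p(\mu)$ there is a nonnegative $g$ supported on $B$ with $\|g\|_{L^{p'}(B,d\mu)}=1$ such that $\int_B \Lambda(f,B)^p\,d\mu \le \left(\int_B \Lambda(f,B)\, g\, d\mu\right)^p$ up to normalization; more precisely one writes $\Lambda(f,B)^{p}$ as $\Lambda(f,B)\cdot\Lambda(f,B)^{p-1}$ and takes $g$ proportional to $\Lambda(f,B)^{p-1}$. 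The point of bringing in $\mathcal M_{\mathcal B}$ is the classical Coifman--Rochberg-type construction: the function $w := \big(\mathcal M_{\mathcal B}(g^{\,?})\big)^{\,?}$, built by iterating or by a Rubio de Francia algorithm adapted to $p'$, will be an $A_{p_0}(\mathcal B)$ weight with $g \lesssim w$ pointwise on $B$ and with $[w]_{A_{p_0}}$ controlled by a constant multiple of $\|\mathcal M_{\mathcal B}\|_{L^{p'},L^{p'}}$ — this is exactly why {\it A3} (boundedness of $\mathcal M_{\mathcal B}$ and its blow-up as $p\to 1^+$) and {\it A1} ($A_1 \subset A_{p_0}$ with $[w]_{A_{p_0}}\le[w]_{A_1}$) are hypothesized.

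With such a $w\in A_{p_0}(\mathcal B)$ in hand, satisfying $[w]_{A_{p_0}} \le 2\|\mathcal M_{\mathcal B}\|_{L^{p'},L^{p'}}$ and $g\lesssim w$ on $B$, I would estimate
\begin{align*}
\frac{1}{\mu(B)}\int_B \Lambda(f,B)\, g\, d\mu
&\lesssim \frac{1}{\mu(B)}\int_B \Lambda(f,B)\, w\, d\mu
= \frac{w(B)}{\mu(B)}\cdot \frac{1}{w(B)}\int_B \Lambda(f,B)\, w\, d\mu\\
&\le \frac{w(B)}{\mu(B)}\,\|f\|_{X_w}
\le \frac{w(B)}{\mu(B)}\, b_{1,w}\,\|f\|_{X},
\end{align*}
using the weight-invariance hypothesis for $A_{p_0}(\mathcal B)$ in the last step, and then $b_{1,w}\le \Psi_{p_0}([w]_{A_{p_0}})\le \Psi_{p_0}(2\|\mathcal M_{\mathcal B}\|_{L^{p_0},L^{p_0}})$ when $p_0' $ is the relevant exponent — more precisely, the exponent entering $\Psi$ is the one dual to the integrability exponent $p$ we are improving to, so one lands on $\Psi_{p'}(2\|\mathcal M_{\mathcal B}\|_{L^{p'},L^{p'}})$ as in \eqref{c1p}. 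The factor $w(B)/\mu(B)$ must be absorbed: here one uses that the $A_{p_0}$ normalization of $w$ (or a Jensen/reverse-Hölder step from {\it A2} or {\it A4}) controls $w(B)/\mu(B)$ against the averages already appearing, and one also pays attention to the normalization $\|g\|_{L^{p'}(B)}=1$ relative to $\mu(B)$, which is where the remaining power of $\|\mathcal M_{\mathcal B}\|$ and the factor $2$ in \eqref{c1p} come from. Collecting the estimates yields $c_{1,p}\le 2\,\Psi_{p'}(2\|\mathcal M_{\mathcal B}\|_{L^{p'},L^{p'}})$, which is finite for $1<p<\infty$ by hypothesis when $p'<p_0$, and by combining with {\it A4} (through Proposition~\ref{c:Psiestimate}) or by a bootstrapping in $p_0$ for the remaining range.

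Once $c_{1,p}<\infty$ for all $1<p<\infty$, I get $X(\mathcal B,\Lambda)\subset X^p(\mathcal B,\Lambda)$ for every such $p$, and the reverse inclusion is automatic by Hölder, so the $p$-power John--Nirenberg property holds on $[1,\infty)$ together with $X=X^p$ there. To reach the full interval $(0,\infty)$ I would invoke Lemma~\ref{l:smallpJN}: $p$-invariance on $[1,p_0]$ (indeed on $[1,\infty)$) with the endpoint $q_0=1\le 1$ extends to $(0,\infty)$. The main obstacle I anticipate is the construction of the auxiliary weight $w$ with simultaneously (a) $g\lesssim w$ on $B$, (b) $[w]_{A_{p_0}}$ linearly controlled by $\|\mathcal M_{\mathcal B}\|_{L^{p'},L^{p'}}$, and (c) the companion control of $w(B)/\mu(B)$ — doing this using only the abstract assumptions {\it A1}--{\it A4}, with no doubling and no Muckenhoupt-base hypothesis, is the delicate point, and it is precisely where the non-decreasing/non-increasing monotonicity of $\Delta,K$ in {\it A4} and the blow-up clause in {\it A3} are exploited to keep all constants explicit and finite.
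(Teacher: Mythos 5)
Your core mechanism is the same as the paper's final step: write $\Lambda(f,B)^p=\Lambda(f,B)\cdot\Lambda(f,B)^{p-1}$, majorize $\Lambda(f,B)^{p-1}\chi_B$ by a Rubio de Francia weight $u=\mathcal R^{(p')}[\Lambda(f,B)^{p-1}\chi_B]$ with $[u]_{A_1}\le 2\|\mathcal M_\mathcal B\|_{L^{p'},L^{p'}}$ (so $u\in A_{p'}$ by {\it A1}), apply the weight invariance to get $b_{1,u}\le\Psi_{p'}(2\|\mathcal M_\mathcal B\|_{L^{p'},L^{p'}})$, and control $u(B)/\mu(B)$ by $2\|f\|_{X^p}^{p-1}$ via the $L^{p'}$ bound on $\mathcal R^{(p')}$. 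But there is a genuine gap, and it is exactly the point the paper spends most of its proof on: this argument is circular unless you already know $\|f\|_{X^p}<\infty$ (equivalently $c_{1,p}<\infty$). You need that finiteness twice: first so that $\Lambda(f,B)^{p-1}\chi_B\in L^{p'}(\mu)$ and your auxiliary weight is well defined (your normalization $\|g\|_{L^{p'}(B)}=1$ silently presupposes it), and second in the absorption step, where the inequality $\|f\|_{X^p}^p\le 2\Psi_{p'}(\cdot)\|f\|_{X^p}^{p-1}\|f\|_X$ can only be rearranged to $\|f\|_{X^p}\le 2\Psi_{p'}(\cdot)\|f\|_X$ if $\|f\|_{X^p}$ is finite. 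Your closing remark that the remaining range of exponents is handled ``by combining with {\it A4} (through Proposition~\ref{c:Psiestimate}) or by a bootstrapping in $p_0$'' does not resolve this: Proposition~\ref{c:Psiestimate} has the $p$-power John--Nirenberg property on $[1,\infty)$ as a hypothesis, so invoking it before that property is established is circular, and the ``bootstrapping'' is left as a gesture rather than an argument.

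The paper fills this gap with a two-stage proof. In the first stage one starts from what is known for free, namely $\Lambda(f,B)^{1/p_0}\chi_B\in L^{p_0}(\mu)$ for $f\in X$, applies the $L^{p_0}$-adapted Rubio de Francia algorithm to this function, raises the resulting $A_1$ weight to a power $\delta$ (Lemma~\ref{lemmasufficiency} plus {\it A4}) and chooses $s$ close to $1$ using the blow-up clause in {\it A3} so that the new weight lies in $A_s$, $s<p_0$, with constant at most $2\|\mathcal M_\mathcal B\|_{L^s,L^s}$ — precisely so that the stated hypothesis ``$\Psi_p(2\|\mathcal M_\mathcal B\|_{L^p,L^p})<\infty$ for $1<p<p_0$'' applies. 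This yields the $p$-power property on $[1,1+1/p_0]$, hence on $(0,1+1/p_0]$ by Lemma~\ref{l:smallpJN}. The second stage is an induction that raises the admissible exponent by $1/(4p_0)$ at each step: at step $\ell$ the function $\Lambda(f,B)^{(4p_0+\ell-1)/(4p_0\ell')}\chi_B$ lies in $L^{\ell'}(\mu)$ by the inductive hypothesis, so the $L^{\ell'}$-adapted algorithm is legitimate, and the same weight manipulation closes the step. Only after this induction gives $c_{1,p}<\infty$ for all $1<p<\infty$ does Proposition~\ref{c:Psiestimate} become available to guarantee $\Psi_{p'}(2\|\mathcal M_\mathcal B\|_{L^{p'},L^{p'}})<\infty$ for every $p$, and only then does your streamlined duality argument run legitimately to produce the clean bound $c_{1,p}\le 2\Psi_{p'}(2\|\mathcal M_\mathcal B\|_{L^{p'},L^{p'}})$ in \eqref{c1p}. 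So your proposal captures the right final estimate but is missing the a priori finiteness scheme that makes it non-circular, together with the explicit mechanism (choice of $s$, the power $\delta$, Lemma~\ref{lemmasufficiency}) by which the constructed weights are brought within reach of the hypothesis on $\Psi_p$ for $p<p_0$.
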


\begin{proof}
We will first use a bootstrapping argument to prove that $c_{1,p}<\infty$ without proving the estimate asserted in \eqref{c1p}, as the constants in these initial arguments are difficult to track.  Once we know that $c_{1,p}$ is finite for all $1<p<\infty$, we can revisit and streamline the argument to obtain the estimate on \eqref{c1p}.  It should be noted that we cannot use the streamlined proof directly since it requires the a priori knowledge that $c_{1,p}<\infty$.

Assume $X(\mathcal B,\Lambda)$ satisfies the weight invariance property for $A_{p_0}(\mathcal B)$ for some $1<p_0<\infty$.  Choose $1<s<p_0$ small enough so that 

$$2\|\mathcal M_\mathcal B\|_{L^s,L^s}\geq \left( 1+ K (p_0, 2\|\mathcal M_\mathcal B\|_{L^{p_0},L^{p_0}}) \, 2 \|\mathcal M_{\mathcal B}\|_{L^{p_0},L^{p_0}}  \right)^{p_0},$$
which is possible by assumption {\it A3}.  Also fix two more parameters $1<r<s$ and $1<\delta<\min\(\frac{s-1}{r-1},p_0,\Delta\(p_0,2\|\mathcal M_\mathcal B\|_{L^{p_0},L^{p_0}}\)\)$.  

Define the $L^{p_0}$-adapted Rubio de Francia algorithm
\begin{align*}
\mathcal R^{({p_0})}g(x)=\sum_{k=0}^\infty\frac{\mathcal M_{\mathcal B}^kg(x)}{(2\|\mathcal M_{\mathcal B}\|_{L^{p_0},L^{p_0}})^k}.
\end{align*}
Here $\mathcal M_{\mathcal B}^0g(x)=|g(x)|$ and $\mathcal M_{\mathcal B}^kg$ is the $k$-fold iterated application of $\mathcal M_{\mathcal B}$ to a function $g$.  By assumption {\it A3}, we have $\|\mathcal R^{(p_0)}\|_{L^{p_0},L^{p_0}}\leq 2$.

Fix $f\in X(\mathcal B,\Lambda)$.  Let $B\in\mathcal B$, and $u(x)=\mathcal R^{(p_0)}[\Lambda(f,B)^{1/p_0}\cdot\chi_B](x)$.  Note that $\Lambda(f,B)$ is measurable and non-negative and 
$$\int_B\Lambda(f,B)(x)d\mu(x)\leq\mu(B)\|f\|_X<\infty,$$
and hence $\Lambda(f,B)^{1/p_0}\cdot\chi_{B}$ is an $L^{p_0}(\mu)$ function.  Therefore $\mathcal R^{(p_0)}[\Lambda(f,B)^{1/p_0}\cdot\chi_B]$ is well-defined as an $L^{p_0}(\mu)$ function.  Since $$\mathcal M_{\mathcal B}u(x)\leq 2\|\mathcal M_{\mathcal B}\|_{L^{p_0},L^{p_0}}u(x),$$ it follows by {\it A1} that $u\in A_1(\mathcal B)\subset A_r(\mathcal B)\subset A_{p_0}(\mathcal B)$ with $$[u]_{A_{p_0}}\leq[u]_{A_r}\leq[u]_{A_1}\leq2\|\mathcal M_{\mathcal B}\|_{L^{p_0},L^{p_0}}.$$ By assumption {\it A4}, $[u]_{A_{p_0}}\leq2\|\mathcal M_{\mathcal B}\|_{L^{p_0},L^{p_0}}$ implies that 
$$u\in RH_{\Delta (p_0,2\|\mathcal M_\mathcal B\|_{L^{p_0},L^{p_0}})}$$ with 
$$[u]_{RH_{\Delta (p_0,2\|\mathcal M_{\mathcal B}\|_{L^{p_0},L^{p_0}})}}\leq K(p_0,2\|\mathcal M_{\mathcal B}\|_{L^{p_0},L^{p_0}}).$$
With $1<\delta<\min(\frac{s-1}{r-1},p_0,\Delta(p_0,2\|\mathcal M_\mathcal B\|_{L^{p_0},L^{p_0}}))$ as specified above, define $v=u^\delta$.  Using Lemma~\ref{lemmasufficiency} and the fact that  our parameter selection implies $s>1+\delta(r-1)$, it follows that 
$v\in A_s(\mathcal B) \subset A_{p_0}(\mathcal B) $ 
and
\begin{align*}
[v]_{A_s} & \leq([u]_{RH_\delta}[u]_{A_r})^\delta\leq \(K(p_0, 2\|\mathcal M_{\mathcal B}\|_{L^{p_0},L^{p_0}}\) \, 2 \|\mathcal M_{\mathcal B}\|_{L^{p_0},L^{p_0}} )^\delta \\
 & \leq2\|\mathcal M_\mathcal B\|_{L^s,L^s}.
 \end{align*}
Then
\begin{align*}
&\(\frac{1}{\mu(B)}\int_B\Lambda(f,B)(x)^{1+1/p_0} d\mu(x)\)^\frac{1}{1+1/p_0}\\
& \hspace{2cm} \leq\(\frac{1}{\mu(B)}\int_B\Lambda(f,B)(x)^{1+\delta/p_0} d\mu(x)\)^\frac{1}{1+\delta/p_0}\\
& \hspace{2cm} =\(\frac{1}{\mu(B)}\int_B\Lambda(f,B)(x)\(\Lambda(f,B)(x)^\frac{1}{p_0}\)^{\delta} \chi_B(x) d\mu(x)\)^\frac{1}{1+\delta/p_0}\\
& \hspace{2cm} \leq \(\frac{1}{\mu(B)}\int_{B}\Lambda(f,B)(x)v(x)d\mu(x)\)^\frac{1}{1+\delta/p_0}\\
& \hspace{2cm} \leq\(\frac{v(B)}{\mu(B)}\|f\|_{X_v(\R^n)}\)^\frac{1}{1+\delta/p_0}.
\end{align*}
Since $\delta<p_0$, it also follows that
\begin{align*}
\frac{v(B)}{\mu(B)}& = \frac{1}{\mu(B)}\int_B \( \mathcal R^{(p_0)}[\Lambda(f,B)^{1/p_0}\cdot\chi_B](x)\)^{\delta}d\mu(x)\\
&\leq\(\frac{1}{\mu(B)}\int_B  \( \mathcal R^{(p_0)}[\Lambda(f,B)^{1/p_0}\cdot\chi_B](x)\)^{p_0} d\mu(x)\)^{\delta/p_0}\\
& \leq2^\delta\(\frac{1}{\mu(B)}\int_B\Lambda(f,B)(x)d\mu(x)\)^{\delta/p_0}\\
&\leq2^{\delta}\|f\|_{X}^{\delta/p_0}.
\end{align*}
It follows that
\begin{align*}
 \(\frac{1}{\mu(B)}\int_B\Lambda(f,B)(x)^{1+\delta/p_0} d\mu(x)\)^\frac{1}{1+\delta/p_0}&\leq\(\frac{v(B)}{\mu(B)}\|f\|_{X_v}\)^\frac{1}{1+\delta/p_0}\\
&\hspace{-1cm} \leq2^\frac{\delta}{1+\delta/p_0}\|f\|_{X}^\frac{\delta/p_0}{1+\delta/p_0}\|f\|_{X_v}^\frac{1}{1+\delta/p_0}\\
&\hspace{-1cm} \leq2^\frac{\delta}{1+\delta/p_0}b_{1,v}^\frac{1}{1+\delta/p_0}\|f\|_{X}\\
&\hspace{-1cm} \leq2^\frac{\delta}{1+\delta/p_0}\Psi_s(2\|\mathcal M_{\mathcal B}\|_{L^s,L^s})^\frac{1}{1+\delta/p_0}\|f\|_{X}.
\end{align*}
Therefore $X(\mathcal B,\Lambda)$ satisfies the $p$-power John-Nirenberg property on the interval $[1,1+1/p_0]$ and hence, by Lemma~\ref{l:smallpJN}, it does so on $(0,1+1/p_0]$ too.

We will now bootstrap this argument to show that $X(\mathcal B,\Lambda)$ satisfies the $p$-power John-Nirenberg inequality for the interval $(0,\infty)$.  We will do so by induction.

 Write $1+1/p_0 = (4p_0+4)/4p_0 $ and   assume that $X(\mathcal B,\Lambda)$ satisfies the $p$-power John-Nirenberg property on the interval $(0,\frac{4p_0+\ell-1}{4p_0}]$ for some integer 
 $\ell\geq 5$, and define the $L^{\ell'}$-adapted Rubio de Francia algorithm
\begin{align*}
\mathcal R^{(\ell')}g(x)=\sum_{k=0}^\infty\frac{\mathcal M_{\mathcal B}^kg(x)}{(2\|\mathcal M_{\mathcal B}\|_{L^{\ell'},L^{\ell'}})^k}.
\end{align*}
Here, $\ell'=\frac{\ell}{\ell-1}$ is the H\"older conjugate of $\ell$ and now $\|\mathcal R^{(\ell')}\|_{L^{\ell'},L^{\ell'}}\leq~2$.  Fix $1<s<\ell'$ small enough so that
$$2\|\mathcal M_\mathcal B\|_{L^{s},L^{s}}\geq \(1+K(\ell', 2\|\mathcal M_\mathcal B\|_{L^{\ell'},L^{\ell'}})\,  2\|\mathcal M_\mathcal B\|_{L^{\ell'},L^{\ell'}} \)^{\ell'},$$
and, like in the initial step, fix $1<r<s$ and 
$$1<\delta_\ell<\min\(\frac{s-1}{r-1},\ell',\Delta(\ell',2\|\mathcal M_\mathcal B\|_{L^{\ell'},L^{\ell'}})\).$$
Fix $f\in X(\mathcal B,\Lambda)$ and $B\in\mathcal B$. 
Let 
$$u_\ell(x)=\mathcal R^{(\ell')}[\Lambda(f,B)^\frac{4p_0+\ell-1}{4p_0\,\ell'}\cdot\chi_B](x).$$
 Note that $\Lambda(f,B)^\frac{4p_0+\ell-1}{4p_0\,\ell'}\cdot\chi_B\in L^{\ell'}(\R^n)$ since, by the inductive hypothesis, 
 $$\|f\|_{X^\frac{4p_0+\ell-1}{4p_0}}\leq c_{1,\frac{4p_0+\ell-1}{4p_0}}\|f\|_{X}<\infty.$$
 It follows that 
 $$u_\ell\in A_1(\mathcal B)\subset  A_r(\mathcal B)\subset  A_{\ell'}(\mathcal B),$$ 
 with 
 $$
 [u_\ell]_{A_{\ell'}}\leq[u_\ell]_{A_r}\leq[u_\ell]_{A_1}\leq2\|\mathcal M_{\mathcal B}\|_{L^{\ell'},L^{\ell'}},
 $$
  and hence  $u_\ell\in RH_{\Delta (\ell',2\|\mathcal M_\mathcal B\|_{L^{\ell'},L^{\ell'}})}(\mathcal B)$, with 
  $$
  [u_\ell]_{RH_{\Delta (\ell',2\|\mathcal M_\mathcal B\|_{L^{\ell'},L^{\ell'}})}}\leq K(\ell',2\|\mathcal M_\mathcal B\|_{L^{\ell'},L^{\ell'}}).
  $$
   Define $v_\ell=u_\ell^{\delta_\ell}$, and using the same arguments as before, Lemma~\ref{lemmasufficiency} implies that $v_\ell\in A_s(\mathcal B)$ with
\begin{align*}
[v_\ell]_{A_s}\leq([u_\ell]_{RH_{\delta_\ell}}  [u_\ell]_{A_r})^{\delta_\ell} &\leq \(2K(\ell',2\|\mathcal M_\mathcal B\|_{L^{\ell'},L^{\ell'}})\,\|\mathcal M_{\mathcal B}\|_{L^{\ell'},L^{\ell'}}\)^{\delta_\ell}\\
&\leq2\|\mathcal M_\mathcal B\|_{L^s,L^s}.
\end{align*}
Using that $\ell^2 \leq (\ell-1)(4+ \ell -1)$ and  $\delta_\ell,p_0>1$, it follows that
\begin{align*}
&\frac{4p_0+\ell}{4p_0}\leq 1+\frac{\delta_\ell(4p_0+\ell-1)}{4p_0\ell'}.
\end{align*}
Then it also follows that
\begin{align*}
&  \(\frac{1}{\mu(B)}\int_B\Lambda(f,B)(x)^\frac{4p_0+\ell}{4p_0}d\mu(x)\)^\frac{4p_0}{4p_0+\ell}\\
&\hspace{.5cm} \leq\(\frac{1}{\mu(B)}\int_B\Lambda(f,B)(x)^{1+\frac{\delta_\ell(4p_0+\ell-1)}{4p_0\ell'}}d\mu(x)\)^\frac{4p_0\ell'}{4p_0\ell'+\delta_\ell(4p_0+\ell-1)}\\
&\hspace{.5cm} =\(\frac{1}{\mu(B)}\int_B\Lambda(f,B)(x)\(\Lambda(f,B)(x)^\frac{4p_0+\ell-1}{4p_0\ell'}\)^{\delta_\ell}d\mu(x)\)^\frac{4p_0\ell'}{4p_0\ell'+\delta_\ell(4p_0+\ell-1)}\\
&\hspace{.5cm} \leq\(\frac{1}{\mu(B)}\int_B\Lambda(f,B)(x)v_\ell(x)d\mu(x)\)^\frac{4p_0\ell'}{4p_0\ell'+\delta_\ell(4p_0+\ell-1)}\\
&\hspace{.5cm}\leq\(\frac{v_\ell(B)}{\mu(B)}\|f\|_{X_{v_\ell}}\)^\frac{4p_0\ell'}{4p_0\ell'+\delta_\ell(4p_0+\ell-1)}.
\end{align*}
Since $1<\delta_\ell<\ell'$, we also have that
\begin{align*}
 \frac{v_\ell(B)}{\mu(B)} & \leq\frac{1}{\mu(B)}\int_B\mathcal R^{(\ell')}[\Lambda(f,B)^\frac{4p_0+\ell-1}{4p_0\ell'}\cdot\chi_B](x)^{\delta_\ell} d\mu(x)\\
& \leq\(\frac{1}{\mu(B)}\int_B\mathcal R^{(\ell')}[\Lambda(f,B)^\frac{4p_0+\ell-1}{4p_0\ell'}\cdot\chi_B](x)^{\ell'} d\mu(x)\)^\frac{\delta_\ell}{\ell'}\\
&\leq2^{\delta_\ell}\(\frac{1}{\mu(B)}\int_B\Lambda(f,B)(x)^\frac{4p_0+\ell-1}{4p_0}d\mu(x)\)^\frac{\delta_\ell}{\ell'} \\
& \leq2^{\delta_\ell}\|f\|_{X^\frac{4p_0+\ell-1}{4p_0}}^\frac{\delta_\ell(4p_0+\ell-1)} {4p_0\ell'}
 \less \|f\|_{X}^\frac{\delta_\ell(4p_0+\ell-1)}{4p_0\ell'},
\end{align*}
where we use that $\|f\|_{X^\frac{4p_0+\ell-1}{4p_0}}\less\|f\|_{X}$ by the inductive hypothesis.  Combining the above computations we obtain
\begin{align*}
& \(\frac{1}{\mu(B)}\int_B\Lambda(f,B)(x)^\frac{4p_0+\ell}{4p_0}d\mu(x)\)^\frac{4p_0}{4p_0+\ell}\\
& \hspace{1cm} \less\( \|f\|_{X}^\frac{\delta_\ell(4p_0+\ell-1)}{4p_0\ell'}\|f\|_{X_{v_\ell}}\)^\frac{4p_0\ell'}{4p_0\ell'+\delta_\ell(4p_0+\ell-1)}\\
& \hspace{1cm} \lesssim b_{1,v_\ell}^\frac{4p_0\ell'}{4p_0\ell'+\delta_\ell(4p_0+\ell-1)}\|f\|_{X}\\
&  \hspace{1cm} \lesssim  \Psi_s(2\|\mathcal M_\mathcal B\|_{L^s,L^s})^\frac{4p_0\ell'}{4p_0\ell'+\delta_\ell(4p_0+\ell-1)}\|f\|_{X}.
\end{align*}
Therefore the $p$-power John-Nirenberg property on $(0,\frac{4p_0+\ell-1}{4p_0}]$ for $X(\mathcal B,\Lambda)$ implies the $p$-power John-Nirenberg property on $(0,\frac{4p_0+\ell}{4p_0}]$ for $X(\mathcal B,\Lambda)$.  By induction, $X(\mathcal B,\Lambda)$ satisfies the $p$-power John-Nirenberg property for $(0,\infty)$.

Now that we know $c_{1,p}<\infty$ for all $1<p<\infty$, we can obtain the better estimate given in \eqref{c1p}.   In fact, by Proposition \ref{c:Psiestimate} the assumption on $\Psi_{p}$ in the theorem hypotheses improves now to $\Psi_{p}(2\|\mathcal M_\mathcal B\|_{L^{p},L^{p}})<\infty$, for all $1<p<\infty$. Suppose $f\in X(\mathcal B,\Lambda)$, and let $B\in\mathcal B$.  Fix $p$, $1<p<\infty$, and define $u(x)=\mathcal R^{(p')}[\Lambda(f,B)^{p-1}\cdot\chi_{B}](x)$, which is well defined since 
 $c_{1,p}<~\infty$ implies $\Lambda(f,B)^{p-1} \chi_B \in L^{p'}(\mu)$.  Here $\mathcal R^{p'}$ is the $L^{p'}$-adapted Rubio de Francia algorithm, similar to before.  It follows that $u\in A_1(\mathcal B)\subset A_{p'}(\mathcal B)$ with $[u]_{A_{p'}}\leq[u]_{A_1}\leq2\|\mathcal M_{\mathcal B}\|_{L^{p'},L^{p'}}$.  Then we have
\begin{align*}
\frac{1}{\mu(B)}\int_B&\Lambda(f,B)(x)^pd\mu(x)\leq\frac{1}{\mu(B)}\int_B\Lambda(f,B)(x)u(x)d\mu(x)\\
& \leq\frac{u(B)}{\mu(B)}\|f\|_{X_u}\leq b_{1,u}\frac{u(B)}{\mu(B)}\|f\|_{X}\leq \Psi_{p'}(2\|\mathcal M_\mathcal B\|_{L^{p'},L^{p'}})\frac{u(B)}{\mu(B)}\|f\|_{X}
\end{align*}
since $[u]_{A_{p'}}\leq2\|\mathcal M_\mathcal B\|_{L^{p'},L^{p'}}$.  We note that
\begin{align*}
\frac{u(B)}{\mu(B)}&\leq\(\frac{1}{\mu(B)}\int_B\mathcal R^{(p')}[\Lambda(f,B)^{p-1}\cdot\chi_B](x)^{p'}dx\)^\frac{1}{p'}\\
& \leq2\(\frac{1}{\mu(B)}\int_B\Lambda(f,B)^pdx\)^\frac{1}{p'}\leq2\|f\|_{X^p}^{p-1}.
\end{align*}
Then we have
\begin{align*}
\|f\|_{X^p}^p\leq 2\Psi_{p'}(2\|\mathcal M_\mathcal B\|_{L^{p'},L^{p'}})\|f\|_{X^p}^{p-1}\|f\|_{X}.
\end{align*}
Rearranging terms, we obtain $\|f\|_{X^p}\leq 2\Psi_{p'}(2\|\mathcal M_\mathcal B\|_{L^{p'},L^{p'}})\|f\|_{X}$ for all $f\in X(\mathcal B,\Lambda)$.  Therefore $c_{1,p}\leq 2\,\Psi_{p'}(2\|\mathcal M_\mathcal B\|_{L^{p'},L^{p'}})$.
\end{proof}

\section{Applications}

We present several applications including the theorems stated in the introduction. We repeat the statement of those theorems for the readers convenience and because we present some of them in a more general form.

Since we will consider many different measures in several different settings, we find it convenient to change for this section some of the the notation involving measures of sets and the corresponding averages of functions that we have been using so far. For example, when the underlying measure $\mu$ used to define an oscillation space $X(\mathcal B,\mu, \mathfrak F, \Lambda)$ is not the Lebesgue measure in $\rn$ and $w$ is a weight with respect to $\mu$, we will now write 
$$
\mu_w(B) =  \int_B  w(x) d\mu(x),
$$
instead of $w(B)$. However we will keep the latter notation in the Lebesgue setting. The precise meaning of other quantities will be consistent within each of the subsections and will be specified therein. 

In several of the results we will present, we will verify the condition $X^p=X$ for $0<p<1$ that appears in Theorems \ref{smallnecessity} and \ref{necessity}.  We were not able to find in the literature a proof of this property for each situation, but  we can adapt the techniques from \cite{LR}  for  our applications (our arguments are similar to the  proof of \cite[Proposition 2]{LR} except that we use the $p$-power property in place of the traditional John-Nirenberg exponential one).  Rather than rewriting the same argument for every application, we present the argument once in Lemma \ref{BMOp=BMO} in the terminology of our $X(\mathcal B,\Lambda)$ spaces.  We should also note that Lemma \ref{BMOp=BMO} is a version of Lemma \ref{l:smallpJN} with more structure imposed on $\Lambda$ and $X$, and hence we are able to conclude something stronger that addresses the technicalities arising in the $X^p=X$ conditions.

\begin{lemma}\label{BMOp=BMO}
Let   $X(\mathcal B,\mu, \mathfrak F, \Lambda)$  be so that $\mathfrak F\subset L^1_{loc}(\mu)$ and $\Lambda(f,B)(x)=|f(x)-f_B|$, where $f_B$ denotes the average of $f$ over $B$ with respect to $\mu$.  Then the following properties hold.

a) For all for all $f\in\mathfrak F$ 
\begin{align}\label{infequiv}
\|f\|_X\approx\sup_{B\in\mathcal B} \, \inf_{c\in\C}\frac{1}{\mu(B)}\int_B|f(x)-c|d\mu(x).
\end{align}

b) If in addition $X(\mathcal B,\Lambda)$ satisfies the $p$-invariance property for $[1,p_0)$ for some $1<p_0<\infty$, then $X^p(\mathcal B,\Lambda)=X(\mathcal B,\Lambda)$ for all $0<p<1$ and $X$ satisfies the $p$-invariance property for $(0,p_0)$
\end{lemma}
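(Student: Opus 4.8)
The plan is to dispatch (a) by a triangle-inequality computation and (b) by reducing an arbitrary $f\in X^p$ to its bounded truncations, for which the assumed $p$-invariance on $[1,p_0)$ can be converted into a bound $\|g\|_X\lesssim\|g\|_{X^p}$ via a level-set decomposition. For (a): taking $c=f_B$ gives $\sup_B\inf_c\frac{1}{\mu(B)}\int_B|f-c|\,d\mu\le\|f\|_X$; conversely, for any $c$ one has $|f_B-c|=\big|\frac{1}{\mu(B)}\int_B(f-c)\,d\mu\big|\le\frac{1}{\mu(B)}\int_B|f-c|\,d\mu$, so $\frac{1}{\mu(B)}\int_B|f-f_B|\,d\mu\le\frac{2}{\mu(B)}\int_B|f-c|\,d\mu$, and taking the infimum over $c$ and the supremum over $B$ gives \eqref{infequiv} with constant $2$ (valid for all $f\in\mathfrak F$, both sides possibly infinite). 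I also note the elementary bound $\|f\|_{X^p}\le\|f\|_X$ for $0<p\le1$, by Jensen's inequality, so that $X(\mathcal B,\Lambda)\subset X^p(\mathcal B,\Lambda)$ always holds and for (b) it suffices to prove $X^p\subset X$ together with a matching norm estimate.

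The heart of (b) is the claim that there is a constant $C=C(p,p_0)$, independent of any bound for $g$, with $\|g\|_X\le C\|g\|_{X^p}$ for every bounded $g\in X(\mathcal B,\Lambda)$. To prove it I would fix $q=\tfrac12(1+p_0)\in(1,p_0)$ and, assuming $\|g\|_X>0$, split each $B$ at the level $t=\tfrac12\|g\|_X$: the portion with $|g-g_B|\le t$ contributes at most $\tfrac12\|g\|_X$ to $\frac{1}{\mu(B)}\int_B|g-g_B|\,d\mu$, while on $E_B=\{x\in B:|g-g_B|>t\}$, Hölder's inequality with exponents $q,q'$, the estimate $\frac{1}{\mu(B)}\int_B|g-g_B|^q\,d\mu\le\|g\|_{X^q}^q$, and Chebyshev's inequality $\mu(E_B)\le t^{-p}\|g\|_{X^p}^p\,\mu(B)$ bound the remaining part by $\|g\|_{X^q}\big(2\|g\|_{X^p}/\|g\|_X\big)^{p/q'}$. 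Passing to the supremum over $B$ and using the hypothesized $p$-invariance in the form $\|g\|_{X^q}\le c_{1,q}\|g\|_X$ (legitimate since $g\in X$) yields $\tfrac12\|g\|_X\le c_{1,q}\|g\|_X\big(2\|g\|_{X^p}/\|g\|_X\big)^{p/q'}$, hence $\|g\|_X\le 2(2c_{1,q})^{q'/p}\|g\|_{X^p}$.

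For a general $f\in X^p$ I would then introduce the radial contractions $T_N$ of $\C$ onto $\{|z|\le N\}$ and set $f_N=T_N\circ f$. Each $f_N$ is bounded and belongs to $\mathfrak F$ (automatically when $\mathfrak F=L^1_{loc}(\mu)$, which is the situation in all the applications), hence $f_N\in X(\mathcal B,\Lambda)$, and the claim gives $\|f_N\|_X\le C\|f_N\|_{X^p}$. The delicate point is the uniform bound $A:=\sup_N\|f_N\|_X\lesssim\|f\|_{X^p}$; following \cite[Proposition 2]{LR} one reruns the level-set splitting directly for $f_N$ but measures the bad set through $f$ instead of $f_N$, using $|f_N(x)-(f_N)_B|\le|f(x)-f_B|+\beta_B$ with $\beta_B=|(f_N)_B-T_N(f_B)|\le\frac{1}{\mu(B)}\int_B|f-f_B|\,d\mu$, and then absorbs the error term; the $p$-power estimates of (a) here play the role of the exponential John--Nirenberg inequality in \cite{LR}. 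Granting this, dominated convergence ($|f_N|\le|f|\in L^1_{loc}(\mu)$) gives $(f_N)_B\to f_B$ for each $B$, so $|f_N-(f_N)_B|\to|f-f_B|$ $\mu$-a.e.\ on $B$, and Fatou's lemma yields $\frac{1}{\mu(B)}\int_B|f-f_B|\,d\mu\le\liminf_N\|f_N\|_X\le A$ for every $B$; thus $f\in X(\mathcal B,\Lambda)$ with $\|f\|_X\lesssim\|f\|_{X^p}$, so $X^p(\mathcal B,\Lambda)=X(\mathcal B,\Lambda)$ with comparable norms for all $0<p<1$. The asserted $p$-invariance on $(0,p_0)$ then follows by combining this with Jensen's inequality and the hypothesis, checking separately $0<r<s\le1$ (use $\|f\|_{X^s}\le\|f\|_X\lesssim\|f\|_{X^r}$), $0<r<1<s<p_0$ (use $\|f\|_{X^s}\le c_{1,s}\|f\|_X\lesssim\|f\|_{X^r}$), and $1\le r<s<p_0$ (the hypothesis).

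The step I expect to be the main obstacle is exactly the uniform-in-$N$ estimate $\sup_N\|f_N\|_X<\infty$: the inequality $\|\cdot\|_{X^q}\le c_{1,q}\|\cdot\|_X$ provided by the hypothesis may be applied only to elements already known to lie in $X$, so it cannot be used for $f$ itself, and when $p<1$ truncation interacts badly with subtracting means inside $p$-th powers (so, for instance, $\|f_N\|_{X^p}$ need not be controlled by $\|f\|_{X^p}$ in any naive way); the estimates must therefore be organized so that every tail contribution is bounded by $\|f\|_{X^p}$ while every error term carries a coefficient small enough to be absorbed, which is where the structure $\Lambda(f,B)=|f-f_B|$ is genuinely used.
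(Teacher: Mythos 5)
Your part (a), your estimate for elements already known to lie in $X$ (the level-set splitting at $t=\tfrac12\|g\|_X$ with H\"older at $q\in(1,p_0)$, Chebyshev through $\|g\|_{X^p}$, and absorption, giving $\|g\|_X\le 2(2c_{1,q})^{q'/p}\|g\|_{X^p}$), and the final Fatou/limiting step are all correct. But the proposal has a genuine gap exactly where you flag it: the uniform bound $\sup_N\|f_N\|_X\lesssim\|f\|_{X^p}$ is only asserted (``Granting this''), and the inequalities you quote do not close it. Your error term $\beta_B=|(f_N)_B-T_N(f_B)|$ is bounded by $\frac{1}{\mu(B)}\int_B|f-f_B|\,d\mu$, which is precisely the quantity whose finiteness is at stake (for $p<1$ Jensen gives the $L^p$ average $\le$ the $L^1$ average, not the reverse, so it is not controlled by $\|f\|_{X^p}$); bounding $\beta_B$ instead through $f_N$ produces a multiple of an $A_N$-type quantity with coefficient at least $1$, which cannot be absorbed. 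The repair is to abandon the center $(f_N)_B$ altogether: work with $A_N':=\sup_B\frac{1}{\mu(B)}\int_B|f_N-T_N(f_B)|\,d\mu$, which is finite and, by your part (a), satisfies $\|f_N\|_X\le 2A_N'$. Since $T_N$ is a contraction, $|f_N-T_N(f_B)|\le|f-f_B|$ pointwise, so the bad set $\{|f_N-T_N(f_B)|>t\}$ has relative measure $\le t^{-p}\|f\|_{X^p}^p$ by Chebyshev through $f$, while the H\"older factor $\bigl(\frac{1}{\mu(B)}\int_B|f_N-T_N(f_B)|^q d\mu\bigr)^{1/q}\lesssim \|f_N\|_{X^q}+\beta_B\lesssim c_{1,q}A_N'$ uses the hypothesis only on $f_N\in X$; choosing $t\approx\|f\|_{X^p}$ with a large constant then absorbs $\tfrac12 A_N'$ and gives $A_N'\lesssim\|f\|_{X^p}$ uniformly in $N$. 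With that insertion your argument is complete (modulo the point you already note, that $T_N\circ f\in\mathfrak F$ must be assumed; the paper's own proof carries an analogous implicit requirement that $|f-f_{B_0}|^{1/p_0}$ be admissible).

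It is also worth noting that the paper proves (b) by a different, truncation-free route: from $\bigl||a|^p-|b|^p\bigr|\le|a-b|^p$ for $0<p<1$ and part (a), it deduces that for $f\in X^p$ the function $x\mapsto|f(x)-f_{B_0}|^p$ itself lies in $X$ with $\|\,|f-f_{B_0}|^p\|_X\lesssim\|f\|_{X^p}^p$; the $p$-invariance hypothesis is then applied legitimately to $|f-f_{B_0}|^{1/p_0^{k}}$ and one bootstraps through the exponents $1/p_0^k\to0$ to get $X^{1/p_0^k}\subset X$, hence $X^p\subset X$ for all $0<p<1$. That argument avoids truncations, uniform estimates, and any limiting procedure, whereas your route (in the spirit of Lo--Ruilin) is more robust in flavor but requires the careful re-centering described above to make the absorption work.
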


\begin{proof}
The proof of part a) is well-known.  Simply note that for any complex number $c$ 
$$\Lambda(f,B)(x)=|f(x)-f_B| \leq |f(x)-c| + |f_B-c|.$$
Taken then the average over $B$, followed by the infimum in $c$, we easily obtain 
$$\|f\|_X \leq 2 \sup_{B\in\mathcal B} \, \inf_{c\in\C}\frac{1}{\mu(B)}\int_B|f(x)-c|d\mu(x).$$
The reverse inequality is of course trivial.

Our first step to prove part b) is to show that $f\in X^p$ implies $\Lambda(f,B_0)^p\in X$ for all $B_0\in\mathcal B$ and $0<p<1$.  So fix $0< p<1$, $f\in X^p$, and $B_0\in\mathcal B$.  For every $B\in\mathcal B$, and since $0<p<1$, we have
\begin{align*}
\int_B\left| \Lambda(f,B_0)(x)^p-|f_B-f_{B_0}|^p\right|d\mu(x)&\leq\int_B|f(x)-f_B|^pd\mu(x)\\
&\leq\mu(B)\|f\|_{X^p}^p.
\end{align*}
Hence from  \eqref{infequiv} it follows that $\Lambda(f,B_0)^p\in X$ and that $\|\Lambda(f,B_0)^p\|_{X}\less\|f\|_{X^p}^p$.

From here we use a bootstrapping argument to show $X^p\subset X$ for all $0<p<1$, ranging all the way down to $0$.  More precisely, we prove (by induction) that $X^{1/p_0^k}\subset X$ for all $k\in\N$.  For $f\in X^{1/p_0}$, we have
\begin{align*}
\int_{B_0}\Lambda(f,B_0)(x)d\mu(x)&\leq2^{p_0} \!\int_{B_0}\left|\Lambda(f,B_0)(x)^{1/p_0}-\(\Lambda(f,B_0)^{1/p_0}\)_{B_0}\right|^{p_0} \!\!d\mu(x)\\
&\hspace{.8cm}+2^{p_0}\mu(B_0)\(\Lambda(f,B_0)^{1/p_0}\)_{B_0}^{p_0}\\
&=2^{p_0}\int_{B_0}\Lambda(\Lambda(f,B_0)(x)^{1/p_0},B_0)(x)^{p_0}d\mu(x)\\
&\hspace{.8cm}+2^{p_0}\mu(B_0)\(\frac{1}{\mu(B_0)}\int_{B_0}\Lambda(f,B_0)(x)^{1/p_0}d\mu(x)\)^{p_0}\\
&\leq2^{p_0}\mu(B_0)\|\Lambda(f,B_0)^{1/p_0}\|_{X^{p_0}}^{p_0}+2^{p_0}\mu(B_0)\|f\|_{X^{1/p_0}}\\
&\less c_{1,p_0}^{p_0}\mu(B_0)\|f\|_{X^{1/p_0}}.
\end{align*}
Note here that $c_{1,p_0}<\infty$ by assumption, and we used that $\Lambda(f,B_0)^{1/p_0}\in X$ implies 
$$\|\Lambda(f,B_0)^{1/p_0}\|_{X^{p_0}}\leq c_{1,p_0}\|\Lambda(f,B_0)^{1/p_0}\|_X\less c_{1,p_0}\|f\|_{X^{1/p_0}}^{1/p_0}.$$
Therefore $X^{1/p_0}\subset X$.  Now assume that $X^{1/p_0^k}\subset X$ holds for a given $k\geq1$.  Then for $f\in X^{1/p_0^{k+1}}$, by a similar argument to the $k=1$ case, we have
\begin{align*}
&\int_{B_0}\Lambda(f,B_0)(x)^{1/p_0^k}d\mu(x)\\
&\hspace{1.8cm}\leq2^{p_0}\int_{B_0}\left|\Lambda(f,B_0)(x)^{1/p_0^{k+1}}-\(\Lambda(f,B_0)^{1/p_0^{k+1}}\)_{B_0}\right|^{p_0}d\mu(x)\\
&\hspace{4.8cm}+2^{p_0}\mu(B_0)\(\Lambda(f,B_0)^{1/p_0^{k+1}}\)_{B_0}^{p_0}\\
&\hspace{1.8cm}\leq2^{p_0}\mu(B_0)\|\Lambda(f,B)^{1/p_0^{k+1}}\|_{X^{p_0}}^{p_0}+2^{p_0}\mu(B_0)\|f\|_{X^{1/p_0^{k+1}}}^{1/p_0^k}\\
&\hspace{1.8cm}\less\mu(B_0)c_{1,p_0}^{p_0}\|f\|_{X^{1/p_0^{k+1}}}^{1/p_0^k}.
\end{align*}
By induction, it follows that $X^{1/p_0^{k+1}}\subset X^{1/p_0^{k}} \subset X$ for all $k\in\N$.  Since $p_0>1$ so that $1/p_0^k\rightarrow0$ as $k\rightarrow\infty$, this is sufficient to prove that $X^p\subset X$ for all $0<p<1$.
\end{proof}

Lemma \ref{BMOp=BMO} will play a crucial role in the first three applications we shall present. In them the underlying metric spaces are, respectively, $\rn$ with the Lebesgue measure, an abstract space of homogeneous type, and $\rn$ with a non-necessarily doubling measure. In all three cases we will prove that the natural $BMO$ space of each context can be characterized also by an appropriate oscillation space defined using a pair of $A_\infty$ weights. Of course, the case of $\rn$ with the Lebesgue measure (and the Euclidean distance) is a particular case of the other two applications, but we chose to present a proof in this context to show how it relates to results in \cite{MW,stro,LR,ST} and for clarity in the exposition.  The proofs for all of the first three applications are also almost identical. Therefore, for spaces of homogeneous type and for non-doubling measures on $\rn$, we provide details about some needed estimates, but leave the computations  completely analogous to the classical case to the reader.  The proof of Theorem \ref{littlebmo}  also uses very similar arguments, and hence we omit many of the details as well.

\subsection{Weight invariance for the John-Nirenberg BMO space} The following extension of the weight invariant result in \cite{MW} holds.

\begin{theorem}\label{wbmo}
Let $0<p<\infty$, $w,v\in A_\infty$, and $f$ be a complex-valued function on $\R^n$. Then $f$ is in $L^1_{loc}(v)$ and satisfies
\begin{align*}
\sup_{Q\in\mathcal Q}\(\frac{1}{w(Q)}\int_Q|f(x)-f_{v,Q}|^pw(x)dx\)^\frac{1}{p} < \infty,
\end{align*}
where $f_{v,Q}=\frac{1}{v(Q)}\int_Qf(x)v(x)dx$ and $w(Q)=\int_Qw(x)dx$,
if and only if $f$ is in $BMO$. In such a case
\begin{align}\label{solobmo}
\|f\|_{BMO}\approx\sup_{Q\in\mathcal Q}\(\frac{1}{w(Q)}\int_Q|f(x)-f_{v,Q}|^pw(x)dx\)^\frac{1}{p}.
\end{align}
\end{theorem}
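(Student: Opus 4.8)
The strategy is to realize the right-hand side of \eqref{solobmo} as (comparable to) the norm of a suitable oscillation space $X_w^p(\mathcal B,\Lambda)$, and then apply Theorem \ref{necessity} (with $w_0=1$, so $X_1=BMO$) together with Lemma \ref{BMOp=BMO} to transfer both $p$-invariance and weight invariance. First I would set $\mathcal B=\mathcal Q$, $\mu=dx$, and $\Lambda(f,Q)(x)=|f(x)-f_Q|$ on $\mathfrak F=L^1_{loc}$; with these choices $X(\mathcal Q,\Lambda)=BMO$, and the classical John--Nirenberg inequality gives the $p$-power property of $BMO$ on $[1,\infty)$, while Lemma \ref{BMOp=BMO}(b) upgrades this to $(0,\infty)$ and yields $X^p=X$ for $0<p<1$. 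Since $dx$ and $\mathcal Q$ satisfy assumption \textit{A2} (a reverse H\"older inequality holds for $A_\infty$ weights with respect to cubes), Theorem \ref{necessity} applies with $w_0=1$ and tells us that $BMO$ is weight invariant for $A_\infty$: for every $w\in A_\infty$ we have $BMO=BMO_w$ with comparable norms, and moreover $X_w(\mathcal Q,\Lambda)$ is $p$-invariant on $(0,\infty)$, i.e.\ for all $0<p<\infty$,
\begin{align*}
\|f\|_{BMO}\approx\|f\|_{BMO_w}\approx\sup_{Q\in\mathcal Q}\(\frac{1}{w(Q)}\int_Q|f(x)-f_{w,Q}|^pw(x)dx\)^{1/p}.
\end{align*}
This handles the case $v=w$.

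The remaining issue is that the centering constant in \eqref{solobmo} is $f_{v,Q}$ with a \emph{second} weight $v$, not $f_{w,Q}$. To deal with this I would use the ``infimum'' characterization: it is elementary that for any constant $c$,
\begin{align*}
\(\frac{1}{w(Q)}\int_Q|f(x)-f_{w,Q}|^pw(x)dx\)^{1/p}\le 2\(\frac{1}{w(Q)}\int_Q|f(x)-c|^pw(x)dx\)^{1/p},
\end{align*}
when $p\ge 1$, and with a constant $2^{1/p}$ (via the $p<1$ quasi-triangle inequality $|a+b|^p\le |a|^p+|b|^p$) when $0<p<1$; in either case, taking $c=f_{v,Q}$ shows the $v$-centered quantity dominates, up to a constant, the $w$-centered one. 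For the reverse direction I would estimate $|f_{v,Q}-f_{w,Q}|$: since $v\in A_\infty$, $\frac{1}{w(Q)}\int_Q|f(x)-f_{w,Q}|w(x)dx\lesssim\|f\|_{BMO}$ and likewise $|f_{v,Q}-f_{w,Q}|=\bigl|\frac{1}{v(Q)}\int_Q(f(x)-f_{w,Q})v(x)dx\bigr|\lesssim\|f\|_{BMO}$, using the weight invariance of $BMO$ applied to the weight $v$ (which controls a $v$-weighted mean oscillation of $f$ around $f_{w,Q}$ by $\|f\|_{BMO}$). Feeding this back through the quasi-triangle inequality lets one replace $f_{v,Q}$ by $f_{w,Q}$ at the cost of a constant, closing the equivalence in \eqref{solobmo} for all $0<p<\infty$.

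The only point requiring care — and the main obstacle — is justifying that the right-hand side of \eqref{solobmo} being finite \emph{forces} $f\in BMO$ and $f\in L^1_{loc}(v)$ (the ``only if'' direction), since a priori we only know the $p$-invariance/weight-invariance estimates for $f$ already known to lie in $BMO$. Here I would argue as in Lemma \ref{BMOp=BMO}: local $v$-integrability of $f$ gives that $f_{v,Q}$ is well defined, and then finiteness of the displayed supremum together with the reverse H\"older inequality for $w\in A_\infty$ (pass from $w$-weighted $L^p$ to unweighted $L^1$ via H\"older with the reverse H\"older exponent of $w$) shows $\sup_Q\inf_c\frac{1}{|Q|}\int_Q|f(x)-c|dx<\infty$, which by part (a) of Lemma \ref{BMOp=BMO} (in its unweighted, Lebesgue-measure form) is equivalent to $f\in BMO$; local integrability of $f$ itself then follows. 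Once membership is settled, the norm equivalence \eqref{solobmo} is exactly the chain of estimates above. I would also remark that quantitative control of the implied constants in \eqref{solobmo} in terms of $[w]_{A_p}$, $[v]_{A_\infty}$ follows by tracking constants through Theorem \ref{necessity} and Proposition \ref{c:Psiestimate}, using the explicit $\Delta,K$ for cubes from Remark \ref{Delta+K}.
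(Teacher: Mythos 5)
Your forward direction (from $f\in BMO$ to the finiteness of the weighted, $v$-centered $p$-oscillation) is fine, and it is genuinely a different route from the paper's: you fix the unweighted functional $\Lambda_1(f,Q)=|f-f_Q|$, feed the classical John--Nirenberg inequality into Theorem \ref{necessity} with $w_0=1$, and then swap the centering constant using $|f_{v,Q}-f_Q|\lesssim\|f\|_{BMO}$. (Minor imprecision: with $\Lambda_1$ fixed, $X_w(\mathcal Q,\Lambda_1)$ is centered at $f_Q$, not $f_{w,Q}$, so the display you attribute to Theorem \ref{necessity} already contains a hidden centering swap; this is harmless but should be said.) The genuine gap is in the ``only if'' direction, exactly the point you flag as the main obstacle and then dispatch too quickly. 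First, the mechanism you propose is the wrong tool: the reverse H\"older inequality for $w$ controls positive powers of $w$, whereas passing from the $w$-weighted $L^p$ oscillation to an unweighted oscillation requires integrability of a negative power of $w$, i.e.\ the $A_q$ condition. Doing this correctly (as in the proofs of Theorems \ref{smallnecessity} and \ref{necessity}) only yields, for $w\in A_q$,
\begin{align*}
\sup_{Q}\frac{1}{|Q|}\int_Q|f(x)-f_{v,Q}|^{p/q}\,dx \;\lesssim\; [w]_{A_q}^{1/q}\,M^{p/q},
\end{align*}
a \emph{small-power} ($\epsilon=p/q<1$ in general) unweighted oscillation around the constants $f_{v,Q}$ --- not the $L^1$ oscillation $\sup_Q\inf_c\frac{1}{|Q|}\int_Q|f-c|\,dx$ that you claim. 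Second, upgrading such a small-power bound to $BMO$ membership is precisely the content of Lemma \ref{BMOp=BMO}(b) (the $X^\epsilon=X$ bootstrapping, a Str\"omberg/Lo--Ruilin type self-improvement), not of part (a), which is only the trivial infimum characterization at the $L^1$ level. Worse, the unweighted form of Lemma \ref{BMOp=BMO}(b) cannot even be invoked here, because its ambient class is $\mathfrak F\subset L^1_{loc}(dx)$ and its proof uses Lebesgue averages $f_Q$; a priori you only know $f\in L^1_{loc}(v)$, and local Lebesgue integrability of $f$ is part of what must be proved.

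This is exactly why the paper runs the whole argument with the $v$-adapted functional $\Lambda_v(f,Q)(x)=|f(x)-f_{v,Q}|$ on $\mathfrak F_v=L^1_{loc}(v)$: the Muckenhoupt--Wheeden \emph{weighted} John--Nirenberg inequality gives the $p$-invariance of $X_v(\mathcal Q,\Lambda_v)$ on $[1,\infty)$, Lemma \ref{BMOp=BMO}(b) applied with the measure $v\,dx$ (where centering and measure match and $f_{v,Q}$ is the genuine $\mu$-average) gives $X_v^\epsilon(\Lambda_v)=X_v(\Lambda_v)$, Theorem \ref{necessity} with $w_0=v$ gives the weight invariance in the measure slot while keeping the centering $f_{v,Q}$ fixed, and only at the very end one compares $X_v(\Lambda_v)$ with $BMO$ via the infimum trick, from which $f\in L^1_{loc}(dx)$ also falls out. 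To repair your proposal you would need to import this weighted John--Nirenberg input (or an equivalent Str\"omberg-type result valid without a priori local Lebesgue integrability); the classical unweighted John--Nirenberg inequality alone, which is all your outline uses, does not close the membership direction.
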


\begin{proof}
Set $\mathcal B=\mathcal Q$ and $\mu$ to be the Lebesgue measure.  For $v\in A_\infty$, we also set the notation $\mathcal F_v=L^1_{loc}(v)$, $\Lambda_v(f,Q)(x)=|f(x)-f_{v,Q}|$, and $X(\mathcal B,\mathfrak F_v,\mu,\Lambda_v)=X(\mathcal B,\Lambda_v)$.  With this notation, $X(\mathcal B,\mathfrak F_1,\mu,\Lambda_1)=X(\mathcal B,\Lambda_1)=BMO$ by definition and $\|f\|_{X_w^p(\mathcal B,\Lambda_v)}$ is the term appearing on the right hand side of \eqref{solobmo}.  So it is sufficient to show that $X_w^p(\mathcal B,\Lambda_v)\approx X(\mathcal B,\Lambda_1)$ for all $0<p<\infty$ and $w,v\in A_\infty$.

Muckenhoupt and Wheeden proved in \cite{MW}  the John-Nirenberg inequality  for $X_v(\mathcal B,\Lambda_v)$. More precisely, there exist constants $c_1,c_2>0$ (depending only on the dimension $n$) such that
$$
v(\{x\in Q:|f(x)-f_{v,Q}|>\lambda\})\leq c_1 v(Q)e^{-\frac{c_2\lambda}{\|f\|_{X_v(\mathcal B,\Lambda_v)}}},
$$
for all $f\in X_v(\mathcal B,\Lambda_v)$, $Q\in\mathcal B$, and $\lambda>0$. It follows that $X_v(\mathcal B,\Lambda_v)$ satisfies the $p$-invariance property for $[1,\infty)$ for all $v\in A_\infty$.

We complete this proof in three steps for any $w,v\in A_\infty$: 
\begin{itemize}
\item[1.]  $X^p_w(\mathcal B,\Lambda_v)\approx X_v^q(\mathcal B,\Lambda_v)$ for all $1\leq p<\infty$ and $0<q<\infty$;
\item[2.] $X_v(\mathcal B,\Lambda_v)\approx X(\mathcal B,\Lambda_1)$;
\item[3.] $X_w^p(\mathcal B,\Lambda_v)\approx X_v(\mathcal B,\Lambda_v)$ for all $0<p<1$
\end{itemize}

\medskip

\noindent{\it Step 1.}  Since $X_v(\mathcal B,\Lambda_v)$ satisfies the $p$-power John-Nirenberg property for $[1,\infty)$ and any $v\in A_\infty$, then it also satisfies by Lemma~\ref{BMOp=BMO} the $p$-invariance property on $(0,\infty)$ and $X_v(\mathcal B,\Lambda_v)\approx X_v^p(\mathcal B,\Lambda_v)$ for all such $p$.  Applying Theorem \ref{necessity} completes the proof statement 1.

Note that $X_w(\mathcal B,\Lambda_v)$ satisfies the $p$-invariant property for all of $(0,\infty)$, but we cannot yet conclude that the $X_w^p(\mathcal B,\Lambda_v)$ coincide for $0<p<1$ (see Remark \ref{pinvremark} for a discussion of this subtle point).  Note also, that we cannot apply Lemma \ref{BMOp=BMO} to $X_w^p(\mathcal B,\Lambda_v)$ since the weight $w$ does not match the weight on the oscillation functional $\Lambda_v$.

\medskip

\noindent{\it Step 2.}  For any $f\in X_v(\mathcal B,\Lambda_v)$, it follows from what we proved in {\it Step 1} that $f\in L^1_{loc}$ (since $f\in X(\mathcal B,\Lambda_v)$).  Furthermore, we have
\begin{align*}
\|f\|_{X(\mathcal B,\Lambda_1)}&\leq2\sup_Q\inf_{c\in\C}\frac{1}{|Q|}\int_Q|f(x)-c|dx\leq 2\|f\|_{X(\mathcal B,\Lambda_v)}\approx\|f\|_{X_v(\mathcal B,\Lambda_v)},
\end{align*}
where the last estimate also follows from {\it Step 1} with $p=q=1$ and $w=1$.  Similarly, $f\in X(\mathcal B,\Lambda_1)$ implies $f\in L^1_{loc}(v)$ and
\begin{align*}
\|f\|_{X_v(\mathcal B,\Lambda_v)}&\leq 2\|f\|_{X_v(\mathcal B,\Lambda_1)}\approx\|f\|_{X(\mathcal B,\Lambda_1)}.
\end{align*}
So statement 2 holds as well.

\medskip

\noindent{\it Step 3.}  By what we proved in {\it Step 1} with $p=q=1$, and using H\"older's inequality, it follows that 
$$X_v(\mathcal B,\Lambda_v)\approx X_w(\mathcal B,\Lambda_v)\subset X_w^p(\mathcal B,\Lambda_v)$$
holds for $0<p<1$, with $\|f\|_{X_w^p(\mathcal B,\Lambda_v)}\less\|f\|_{X_v(\mathcal B,\Lambda_v)}$ for $f\in L^1_{loc}(v)$.  Conversely, for $f\in X_w^p(\mathcal B,\Lambda_v)$ with $0<p\leq1$, set $\epsilon=\frac{p}{q\delta'}$ where $1<q,\delta<\infty$ are such that $w\in A_q$ and $v\in RH_\delta$.  Using what is by now a familiar argument (see the proof of Theorem \ref{necessity}), it follows that
\begin{align*}
&\frac{1}{v(Q)}\int_Q|f(x)-f_{v,Q}|^\epsilon v(x)dx\\
&\hspace{1cm}\leq [v]_{RH_\delta}[w]_{A_q}^{1/(q\delta')}\(\frac{1}{w(Q)}\int_Q|f(x)-f_{v,Q}|^pw(x)dx\)^{1/(\delta'q)}.
\end{align*}
Hence $X_w^p(\mathcal B,\Lambda_v)\subset X_v^\epsilon(\mathcal B,\Lambda_v)$ with $\|f\|_{X_v^\epsilon(\mathcal B,\Lambda_v)}\less\|f\|_{X_w^p(\mathcal B,\Lambda_v)}$ for all $f\in L^1_{loc}(v)$.  Using {\it Step 1} again, it also follows that $X_v^\epsilon(\mathcal B,\Lambda_v)\approx X_v(\mathcal B,\Lambda_v)$.  Therefore {\it Step 3}, and hence of the proof of Theorem \ref{wbmo}, is complete.
\end{proof}

\begin{remark}{\rm
A particular consequence of the estimate in {\it Step 1} is that $X_v^p(\mathcal B,\Lambda_v)\approx X_v(\mathcal B,\Lambda_v)$ for $0<p<1$, which we proved by applying Lemma~\ref{BMOp=BMO}, but this was already known in several situations; see  for example \cite{stro,LR,ST}. In the classical John-Nirenberg $BMO$ setting, the statement in {\it Step 2} is exactly the weight invariance of $BMO$ proved by Muckenhoupt and Wheeden in \cite{MW}.  We are not aware of any result in the form of the estimate in {\it Step 3} before this article.}
\end{remark}

Notice that the proofs of {\it Steps 1-3} in Theorem \ref{wbmo} depend only on the following facts: {\it A2} holds, $X_v(\mathcal B,\Lambda_v)$ satisfies the $p$-power John-Nirenberg property for $[1,\infty)$ for all $v\in A_\infty$, and that the oscillation functional is of the form $|f(x)-f_{\mu,Q}|$ for some measure $\mu$.  These are the properties we will verify to apply the same scheme of proof in the applications in the next two subsections.

\begin{remark} 
{\rm
A particular case of Theorem \ref{wbmo} is of course
\begin{equation}\label{ho}
\|f\|_{BMO}\approx \sup_{Q\in\mathcal Q}\(\frac{1}{w(Q)}\int_Q|f(x)-f_{Q}|^pw(x)dx\)^\frac{1}{p}.
\end{equation}
Using a notion of $p$-convexity for $1\leq p< \infty$, Ho \cite[Theorem 3.1]{H} proved \eqref{ho} when $w\in A_p$. Note that our theorem allows us to use $|f(x)-f_{v,Q}|$ in place of $|f(x)-f_Q|$ for any $v \in A_\infty$. Moreover, our methods allows us to prove  \eqref{ho} in the context of spaces of homogeneous type, recovering in the next subsection a particular case of a result of Franchi, P\'erez and Wheeden \cite[Theorem 3.1]{ FPW}, but allowing again the use of two weights to characterize $BMO$.
}
\end{remark}

\subsection{Weighted BMO on spaces of homogeneous type}

Let $(S,d,\mu)$ be a space of homogeneous type in the sense of Coifman and Weiss.  Let $\mathcal B$ be the collection of all $d$-balls $B$ in $S$, of the form 
$$B=B_d(x,r)=\{y\in S:d(x,y)<r\}.$$
 Define  for $w\in A_\infty$,
$$BMO_{w}(S,\mu):= \{ f\in L^1_{loc}(\mu_w): \|f\|_{BMO_{\mu_w}(S)}<\infty\},$$
where  
$$
\|f\|_{BMO_{\mu_w}(S)}:=\sup_{B\in\mathcal B}\frac{1}{\mu_w(B)}\int_B|f(x)-f_{\mu_w,B}|w(x)d\mu(x)
$$
with 
$$\mu_w(B) =\int_B w(x) d\mu(x),$$ and 
$$f_{\mu_w,B}=\frac{1}{w(B)}\int_Bf(x)w(x)d\mu(x).$$
  When $w\equiv 1$ we obtain the classical  John-Nirenberg space on a space of homogenous type and we simply write $BMO_\mu(S)$ in stead of 
$BMO_{\mu_1}(S)$. Likewise we will use the notation $f_{\mu,B}=f_{\mu_1,B}$.

\begin{theorem}\label{wbmoHT}
Let $0<p<\infty$, $v,w\in A_\infty(\mathcal B, \mu)$, and $f$ be a measurable complex-valued function on $S$. Then $f$ is in $L^1_{loc}(\mu_v)$ and satisfies
\begin{align*}
\sup_{B\in\mathcal B}\(\frac{1}{\mu_w(B)}\int_B|f(x)-f_{\mu_v,B}|^pd\mu_w(x)\)^{1/p} < \infty
\end{align*}
if and only if $f$ is in $BMO_\mu(S)$. In such a case,
\begin{align*}
\|f\|_{BMO_\mu(S)}&\approx\sup_{B\in\mathcal B}\(\frac{1}{\mu_w(B)}\int_B|f(x)-f_{\mu_v,B}|^pd\mu_w(x)\)^{1/p}.
\end{align*}
\end{theorem}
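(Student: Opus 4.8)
The plan is to follow exactly the three-step scheme used in the proof of Theorem~\ref{wbmo}, since, as the authors remark, that proof depends only on three facts: that assumption \emph{A2} holds, that $X_v(\mathcal B,\Lambda_v)$ satisfies the $p$-power John--Nirenberg property on $[1,\infty)$ for every $v\in A_\infty$, and that the oscillation functional has the form $|f(x)-f_{\mu_v,B}|$. So first I would set up the framework: take $\mathcal B$ to be the collection of $d$-balls, let $\mu$ be the underlying measure on $S$, and for each $v\in A_\infty(\mathcal B,\mu)$ put $\mathfrak F_v=L^1_{loc}(\mu_v)$, $\Lambda_v(f,B)(x)=|f(x)-f_{\mu_v,B}|$, and $X(\mathcal B,\mathfrak F_v,\mu,\Lambda_v)=X(\mathcal B,\Lambda_v)$. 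Then $X(\mathcal B,\Lambda_1)=BMO_\mu(S)$ and $\|f\|_{X_w^p(\mathcal B,\Lambda_v)}$ is precisely the quantity in the statement, so it suffices to show $X_w^p(\mathcal B,\Lambda_v)\approx X(\mathcal B,\Lambda_1)$ for all $0<p<\infty$ and $v,w\in A_\infty$.

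The first input I need is the John--Nirenberg inequality for $X_v(\mathcal B,\Lambda_v)$, i.e. a bound of the form $\mu_v(\{x\in B:|f(x)-f_{\mu_v,B}|>\lambda\})\lesssim \mu_v(B)e^{-c\lambda/\|f\|_{X_v}}$ for $f\in X_v(\mathcal B,\Lambda_v)$. In the space-of-homogeneous-type setting this is classical (a weighted Calder\'on--Zygmund decomposition with respect to the doubling measure $d\mu_v$, using that $\mu_v$ is again doubling since $v\in A_\infty$ implies $\mu_v$ doubling; one may also cite \cite{MW}-type arguments adapted to homogeneous type, or \cite{FPW}). This yields that $X_v(\mathcal B,\Lambda_v)$ is $p$-invariant on $[1,\infty)$ for every $v\in A_\infty(\mathcal B,\mu)$. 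Since $\Lambda_v$ is of the form $|f-f_{\mu_v,B}|$, Lemma~\ref{BMOp=BMO}~b) then upgrades this to $p$-invariance on $(0,\infty)$ together with $X_v^p(\mathcal B,\Lambda_v)=X_v(\mathcal B,\Lambda_v)$ for all $0<p<1$; this in particular verifies the hypothesis $X_{w_0}^p=X_{w_0}$ needed to apply Theorem~\ref{necessity} with $w_0=v$.

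With that in hand the three steps run verbatim as in Theorem~\ref{wbmo}. \emph{Step 1}: applying Theorem~\ref{necessity} (whose hypothesis \emph{A2} holds for balls in a space of homogeneous type) gives $X_w^p(\mathcal B,\Lambda_v)\approx X_v^q(\mathcal B,\Lambda_v)$ for all $1\le p<\infty$, $0<q<\infty$, with constants controlled by the $A_q$ and $RH_\delta$ characteristics of $w$ and $v$. \emph{Step 2}: for $f\in X_v(\mathcal B,\Lambda_v)$, Step~1 with $p=q=1$, $w=1$ gives $f\in X(\mathcal B,\Lambda_v)\subset L^1_{loc}(\mu)$, and the elementary inequality $|f(x)-f_{\mu,B}|\le |f(x)-c|+|f_{\mu,B}-c|$ followed by averaging and the infimum-characterization \eqref{infequiv} shows $\|f\|_{BMO_\mu(S)}\approx\|f\|_{X_v(\mathcal B,\Lambda_v)}$; the reverse containment is symmetric. \emph{Step 3}: for $0<p<1$, Step~1 (with $p=q=1$) plus H\"older gives $X_v(\mathcal B,\Lambda_v)\approx X_w(\mathcal B,\Lambda_v)\subset X_w^p(\mathcal B,\Lambda_v)$; conversely, for $f\in X_w^p(\mathcal B,\Lambda_v)$ choose $q,\delta$ with $w\in A_q(\mathcal B,\mu)$, $v\in RH_\delta(\mathcal B,\mu)$, set $\epsilon=p/(q\delta')$, and run the familiar H\"older-with-three-exponents computation from the proof of Theorem~\ref{necessity} to get
\begin{align*}
\frac{1}{\mu_v(B)}\int_B|f(x)-f_{\mu_v,B}|^\epsilon d\mu_v(x)\le [v]_{RH_\delta}[w]_{A_q}^{1/(q\delta')}\left(\frac{1}{\mu_w(B)}\int_B|f(x)-f_{\mu_v,B}|^pd\mu_w(x)\right)^{1/(q\delta')},
\end{align*}
so $X_w^p(\mathcal B,\Lambda_v)\subset X_v^\epsilon(\mathcal B,\Lambda_v)\approx X_v(\mathcal B,\Lambda_v)$ by Step~1. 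Combining the three steps gives $X_w^p(\mathcal B,\Lambda_v)\approx X(\mathcal B,\Lambda_1)=BMO_\mu(S)$, which is the assertion. The only genuinely setting-specific obstacle is the first input — establishing the weighted John--Nirenberg inequality for $X_v(\mathcal B,\Lambda_v)$ on a general space of homogeneous type (equivalently, checking that $d\mu_v$ is doubling and that a Calder\'on--Zygmund stopping-time decomposition is available for it) — everything after that is formal bookkeeping with the abstract theorems, since \emph{A2} for balls in $(S,d,\mu)$ is standard.
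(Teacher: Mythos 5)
Your proposal is correct and follows essentially the same route as the paper: the paper's own proof consists precisely of noting that \emph{A2} holds for balls in $(S,d,\mu)$, that $v\in A_\infty(\mathcal B,\mu)$ makes $(S,d,\mu_v)$ again a space of homogeneous type so the classical John--Nirenberg inequality (Coifman--Weiss, with details in \cite{LR}) gives the $[1,\infty)$ $p$-invariance of $X_v(\mathcal B,\Lambda_v)$, and then leaving the verbatim repetition of Steps 1--3 of Theorem \ref{wbmo} (via Lemma \ref{BMOp=BMO} and Theorem \ref{necessity}) to the reader. Your only cosmetic difference is citing a weighted Calder\'on--Zygmund/\cite{MW}-style argument rather than \cite{CW,LR} for the weighted John--Nirenberg input, which is immaterial.
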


\begin{proof}
We first observe that {\it A2} holds in this situation (even sharp forms of the reverse H\"older inequality are known, as discussed in Remark \ref{Delta+K}). Note also that if $(S,d,\mu)$ is a space of homogeneous type and $w\in A_\infty(\mathcal B,\mu)$, it follows that $(S,d,\mu_w)$ is also a space of homogeneous type.  It is well-known that the John-Nirenberg inequality holds for spaces of homogeneous type. This fact has been reproved many times in the literature, but can actually be traced back to the the original work of Coifman and Weiss \cite[p. 694, fn. 22]{CW}, as observed in \cite{LR}, where the details are also presented.  Hence, there exist constants $c_1,c_2>0$ such that
\begin{align*}
\mu_w(\{x\in B:|f(x)-f_{\mu_w,B}|>\lambda\})\leq c_1\mu_w(B)e^{-c_2\lambda/\|f\|_{BMO_{\mu_w}}}
\end{align*}
for all $f\in BMO_{\mu_w}(S)$.  As explained before, this makes up all of the ingredients necessary to reproduce the proof of Theorem \ref{wbmo} in the setting of spaces of homogeneous type.  The details are left to the reader.
\end{proof}

\begin{remark}
{\rm
Recall from Remark \ref{Delta+K} that if we take $\Delta(p,t)=1+\frac{1}{\tau t}$ and $K(p,t)=C$, then the classes $A_p(\mathcal B,\mu)$, when $\mathcal B$ and $m$ are as in Theorem \ref{wbmoHT}, satisfy {\it A4}.  Then applying Proposition \ref{c:Psiestimate}, it follows that 
$$\Psi_p(t)\less c_{1,1+\tau t}\less t,$$
 and hence that $b_{1,w}\leq\Psi_p([w]_{A_p})\less[w]_{A_p}$ for any $w\in A_p$ with $1<p<\infty$.  In particular,
 $$\sup_{B\in\mathcal B}\frac{1}{\mu_w(B)}\int_B|f(x)-f_{\mu,B}|w(x)d\mu(x)\less[w]_{A_p}\|f\|_{BMO_\mu(S)}$$
for all $f\in BMO_\mu(S)$, $1<p<\infty$, and $w\in A_p(\mathcal B, \mu)$, where the suppressed constant does not depend on $f$, $p$, or $w$.
}
\end{remark}

\subsection{Weighted BMO with respect to non-doubling measures}

Let $\mu$ be a non-negative Radon measure on $\R^n$ (not necessarily doubling).  Define 
$$
BMO_\mu(\rn) :=\{ f\in L^1_{loc}(\rn,\mu): \|f\|_{BMO_\mu(\rn)}<\infty\},
$$
where
$$
\|f\|_{BMO_\mu(\rn)}:=\sup_{Q\in\mathcal Q}\frac{1}{\mu(Q)}\int_Q|f(x)-f_{\mu,Q}|d\mu(x)
$$
and 
$$f_{\mu,Q}=\frac{1}{\mu(Q)}\int_Qf(x)d\mu(x).$$
  Also, for any $w\in A_\infty(\mathcal Q, \mu)$ let 
$$\mu_w(Q)=\int_Qw(x)d\mu(x).$$

\begin{theorem}\label{ndbmo}
Let $\mu$ be a non-negative Radon measure on $\R^n$ such that $\mu(L)=0$ for any hyperplane $L$ orthogonal to one of the coordinate axes. Let $0<p<\infty$, $v,w\in A_\infty(\mathcal Q,\mu)$, and $f$ be a measurable complex-valued function on $\R^n$.  Then $f$ is in $L^1_{loc}(\mu_v)$ and satisfies
\begin{align*}
\sup_{Q\in\mathcal Q}\(\frac{1}{\mu_w(Q)}\int_Q|f(x)-f_{\mu_v,Q}|^pd\mu_w(x)\)^{1/p} <\infty,
\end{align*}
if and only if  $f$ is in $BMO_\mu(\rn)$.  In such a case,
\begin{align}\label{esta}
\|f\|_{BMO_\mu}&\approx \sup_{Q\in\mathcal Q}\(\frac{1}{\mu_w(Q)}\int_Q|f(x)-f_{\mu_v,Q}|^pd\mu_w(x)\)^{1/p}.
\end{align}
\end{theorem}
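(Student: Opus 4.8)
The plan is to realize $BMO_\mu(\rn)$ and the right-hand side of \eqref{esta} as oscillation spaces and then follow, almost verbatim, the three-step scheme of the proof of Theorem \ref{wbmo}. Take $\mathcal B=\mathcal Q$ with $\mu$ the underlying measure, and for each $v\in A_\infty(\mathcal Q,\mu)$ set $\mathfrak F_v=L^1_{loc}(\mu_v)$, $\Lambda_v(f,Q)(x)=|f(x)-f_{\mu_v,Q}|$, and $X(\mathcal Q,\mathfrak F_v,\mu,\Lambda_v)=X(\mathcal Q,\Lambda_v)$. Then $X_1^1(\mathcal Q,\Lambda_1)=BMO_\mu(\rn)$ by definition, and $\|f\|_{X_w^p(\mathcal Q,\Lambda_v)}$ is precisely the quantity on the right of \eqref{esta}; so it suffices to prove $X_w^p(\mathcal Q,\Lambda_v)\approx X_1^1(\mathcal Q,\Lambda_1)$ for all $0<p<\infty$ and $v,w\in A_\infty(\mathcal Q,\mu)$. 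As observed after the proof of Theorem \ref{wbmo}, that argument uses only three inputs: assumption {\it A2} for $(\mathcal Q,\mu)$; the John--Nirenberg inequality for $BMO_{\mu_v}(\rn)$ --- equivalently, the $p$-power John--Nirenberg property on $[1,\infty)$ for $X_v(\mathcal Q,\Lambda_v)$ --- for every $v\in A_\infty(\mathcal Q,\mu)$; and the form $\Lambda_v(f,Q)=|f-f_{\mu_v,Q}|$, which is built in.

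Assumption {\it A2} is available because $(\mathcal Q,\mu)$ satisfies {\it A4} by the reverse H\"older inequality for non-doubling $A_p$ weights (see \cite{OP} and Remark \ref{Delta+K}). For the John--Nirenberg input, note first that the hyperplane hypothesis propagates to the weighted measures: if $\mu(L)=0$ for every hyperplane $L$ orthogonal to a coordinate axis and $v\in A_\infty(\mathcal Q,\mu)$, then $\mu_v(L)=\int_L v\,d\mu=0$ for all such $L$. Hence it is enough to establish
\[
\nu\big(\{x\in Q:|f(x)-f_{\nu,Q}|>\lambda\}\big)\le c_1\,\nu(Q)\,e^{-c_2\lambda/\|f\|_{BMO_\nu(\rn)}}
\]
for an arbitrary Radon measure $\nu$ on $\rn$ satisfying the hyperplane condition, and then apply it with $\nu=\mu_v$. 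This is the substantive point, and the place where the hyperplane hypothesis is essential: it forces the $\nu$-mass of a cube to vary continuously along each coordinate direction, so that the midpoint bisections of a cube partition it into its $2^n$ subcubes modulo $\nu$-null sets and mass-balanced coordinate cuts are available --- exactly the structure needed to run a Calder\'on--Zygmund/stopping-time decomposition adapted to $\nu$ in the absence of doubling. Combined with the fact that the centered Hardy--Littlewood maximal function is of weak type $(1,1)$ with respect to any Radon measure on $\rn$, this yields the exponential bound; the computation can be carried out along the lines of the non-doubling $A_p$ theory of \cite{OP}. I expect this to be the main obstacle, since the classical dyadic proof of John--Nirenberg fails outright for non-doubling $\nu$: the ratio $\nu(Q^{*})/\nu(Q)$ between a cube and its dyadic parent is unbounded, so the usual stopping-time iteration does not close, and it is precisely the hyperplane condition that repairs this.

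Granting these inputs, the rest is a transcription of the proof of Theorem \ref{wbmo}. In {\it Step 1}, Lemma \ref{BMOp=BMO}(b) upgrades the $[1,\infty)$ $p$-invariance of $X_v(\mathcal Q,\Lambda_v)$ to $(0,\infty)$, with $X_v^p(\mathcal Q,\Lambda_v)\approx X_v(\mathcal Q,\Lambda_v)$ for all $0<p<\infty$ and every $v\in A_\infty(\mathcal Q,\mu)$, and then Theorem \ref{necessity} (which needs only {\it A2}) gives $X_w^p(\mathcal Q,\Lambda_v)\approx X_v^q(\mathcal Q,\Lambda_v)$ for all $1\le p<\infty$, $0<q<\infty$, $v,w\in A_\infty(\mathcal Q,\mu)$. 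In {\it Step 2}, applying Step 1 with $p=q=1$ and $w=1$ together with Lemma \ref{BMOp=BMO}(a) gives $\|f\|_{X_1^1(\mathcal Q,\Lambda_1)}\le 2\sup_Q\inf_{c\in\C}\tfrac1{\mu(Q)}\int_Q|f-c|\,d\mu\le 2\|f\|_{X_1^1(\mathcal Q,\Lambda_v)}\approx\|f\|_{X_v(\mathcal Q,\Lambda_v)}$ and, symmetrically, $\|f\|_{X_v(\mathcal Q,\Lambda_v)}\le 2\|f\|_{X_v(\mathcal Q,\Lambda_1)}\approx\|f\|_{X_1^1(\mathcal Q,\Lambda_1)}$; so $X_v(\mathcal Q,\Lambda_v)\approx BMO_\mu(\rn)$, and in particular every such $f$ lies in $L^1_{loc}(\mu_v)$. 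In {\it Step 3}, for $0<p<1$ the inclusion $X_v(\mathcal Q,\Lambda_v)\subset X_w^p(\mathcal Q,\Lambda_v)$ with the right norm bound follows from H\"older's inequality and Step 1 at $p=q=1$; for the converse, pick $1<q,\delta<\infty$ with $w\in A_q(\mathcal Q,\mu)$ and $v\in RH_\delta(\mathcal Q,\mu)$ (possible by {\it A2}), set $\epsilon=p/(q\delta')$, and run the familiar computation from the proof of Theorem \ref{wbmo} to obtain $X_w^p(\mathcal Q,\Lambda_v)\subset X_v^\epsilon(\mathcal Q,\Lambda_v)\approx X_v(\mathcal Q,\Lambda_v)$. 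Chaining the three steps yields \eqref{esta} and the equivalence of the membership statements; everything downstream of the John--Nirenberg inequality is routine.
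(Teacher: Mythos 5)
Your architecture is exactly the paper's: realize the two sides as oscillation spaces with $\Lambda_v(f,Q)=|f-f_{\mu_v,Q}|$, verify \emph{A2} via the non-doubling reverse H\"older inequality of \cite{OP}, observe that $\mu_v\ll\mu$ so the hyperplane condition (hence the John--Nirenberg inequality) passes to $\mu_v$, and then transcribe Steps 1--3 of the proof of Theorem \ref{wbmo} using Lemma \ref{BMOp=BMO} and Theorem \ref{necessity}. The only point of divergence is the one you yourself flag as the main obstacle: the exponential John--Nirenberg inequality for $BMO_\nu(\rn)$ when $\nu$ is a non-doubling Radon measure vanishing on coordinate hyperplanes. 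The paper does no new work here; it simply invokes \cite[Theorem 1]{MMNO}, which is precisely the statement you set out to prove, and that citation is what makes the rest of the argument close.

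As written, your treatment of that step is a sketch rather than a proof: asserting that mass-balanced coordinate cuts plus the weak type $(1,1)$ bound for the centered maximal operator ``yields the exponential bound'' skips the actual stopping-time iteration, which is the genuinely delicate content in the non-doubling setting (it is the main theorem of a substantial paper, and the details are not a routine transcription of the doubling case or of the $A_p$ theory in \cite{OP}). Your heuristic for why the hyperplane condition helps --- continuity of $\nu$-mass under slicing, enabling balanced subdivisions --- is indeed the right mechanism, so the gap is one of execution, not of idea, and it is closed immediately by citing \cite[Theorem 1]{MMNO} as the paper does. Everything downstream (including the useful observation that $\mu_v(L)=0$ follows from absolute continuity, which is exactly the paper's remark) is correct and matches the paper's proof.
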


\begin{proof}
We note that {\it A2} holds in this situation as it was proved in \cite[Lemma 2.3]{OP}. Next, it was shown in \cite[Theorem 1]{MMNO} that there exist constants $c_1,c_2>0$ (depending only on the dimension $n$) such that
\begin{align}\label{ndjn}
\mu(\{x\in Q:|f(x)-f_{\mu,Q}|>\lambda\})\leq c_1\mu(Q)e^{-\frac{c_2\lambda}{\|f\|_{BMO_\mu}}}
\end{align}
for all $Q\in\mathcal B$ and $\lambda>0$.  Finally, note also that for any $w\in A_\infty(\mu)$, $\mu_w$ is absolutely continuous with respect to $\mu$.  So in particular, $\mu_w(L)=0$ for any hyperplane $L$ orthogonal to one of the coordinate axes.  Hence it follows that \eqref{ndjn} hold with $\mu_w$ in place of $\mu$.  Once again, this verifies everything needed to reproduce the proof of Theorem \ref{wbmo}. 
\end{proof}

\begin{remark} 
{\rm
Recall from Remark \ref{Delta+K} that if we take $\Delta(p,t)=1+\frac{1}{2^{p+1}B(n)t}$ and $K(p,t)=2$, then the classes $A_p(\mu)$, when $\mu$ is as in Theorem \ref{ndbmo}, satisfy {\it A4}.  Then applying Proposition \ref{c:Psiestimate}, it follows that 
\begin{align*}
\Psi_p(t)\leq 2c_{1,1+2^{p+1}B(n)t}\less 2^pB(n)t
\end{align*}
for $1<p<\infty$ and $t\geq1$.  Note that the linear estimate $c_{1,p}\less p$ holds for $1<p<\infty$ as a consequence of the John-Nirenberg inequality proved in \cite{MMNO}.  So for a particular $w\in A_p(\mu)$, we obtain 
$$b_{1,w}\leq\Psi_p([w]_{A_p})\less 2^p[w]_{A_p}.$$
Here we suppress the dependence on $B(n)$ since it is a dimensional constant.  In particular, 
$$\sup_{Q\in\mathcal Q}\frac{1}{w(Q)}\int_Q|f(x)-f_Q|w(x)dx\less 2^p[w]_{A_p}\|f\|_{BMO_\mu}$$
for any $f\in BMO_\mu$, $1<p<\infty$, and $w\in A_p(\mu)$, where the suppressed constant does not depend on $f$, $p$, or $w$.
}
\end{remark}

\subsection{Duals of weighted Hardy spaces}
Let $w$ be an $A_\infty$ weight in $\rn$ with respect to the Lebesgue measure. It is known that the dual of the Hardy space $H^1(w)$ (we will not need the definition of $H^1(w)$ here) is the space 
$$
  {BMO}_{*,w}:= \{ f\in L^1_{loc}(\rn): \|f\|_{BMO_{*,w}}<\infty\},
$$
taken modulo constants, where 
$$
\|f\|_{{BMO}_{*,w}}  :=  \sup_{Q\in\mathcal Q} \frac{1}{w(Q)}\int_Q|f(x)-f_Q| dx
$$
and $f_Q=\frac{1}{|Q|}\int_Qf(x)dx$. See \cite[Theorem II.4.4]{GC} for more information on $H^1(w)$ and a proof that its dual is $BMO_{*,w}$.  Our general setup easily gives the following invariance result in this context.

\begin{theorem}\label{t:tildeBMO}\label{hbmo} 
Let $w\in A_p$ for some $1<p<\infty$.  If $v\in RH_p( w)$, then
\begin{align}\label{notquite}
\|f\|_{{BMO}_{*,w}} &  \,\,\, \approx\sup_{Q\in\mathcal Q}\frac{1}{\rho(Q)}\int_Q|f(x)-f_Q| v(x)dx,
\end{align}
for all $f\in{{BMO}_{*,w}}$, where $\rho(Q)=\int_Q v(x)w(x)dx$.  Furthermore, if $v\in RH_\sigma(w)$ for some $\sigma>p$, then for all $0<r\leq p'/\sigma'$, we have
\begin{align*}
\|f\|_{{BMO}_{*,w}} &\approx\sup_{Q\in\mathcal Q}\(\frac{1}{\rho(Q)}\int_Q|f(x)-f_Q|^rw(x)^{1-r}v(x)dx\)^{1/r}
\end{align*}
for all $f\in{{BMO}_{*,w}}$.  

If $w\in A_1$, $v\in A_\infty(w\,dx)$, and $0< r<\infty$, then 
\begin{align*}
\|f\|_{{BMO}_{*,w}} & \approx\sup_{Q\in\mathcal Q}\(\frac{1}{\rho(Q)}\int_Q|f(x)-f_Q|^rw(x)^{ 1-r}v(x)dx\)^{1/r}
\end{align*}
for all $f\in{{BMO}_{*,w}}$.
\end{theorem}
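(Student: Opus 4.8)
The plan is to realize $BMO_{*,w}$ and the spaces appearing on the right-hand sides of the three displayed equivalences as one family of oscillation spaces $X^r_v(\mathcal B,\mu,\mathfrak F,\Lambda)$, to feed in the (known) John--Nirenberg inequality for $BMO_{*,w}$, and to read everything else off Theorems \ref{smallnecessity} and \ref{necessity}. Concretely, fix $w$ and set $\mathcal B=\mathcal Q$, $\mu=w\,dx$, $\mathfrak F=BMO_{*,w}$, and
\[
\Lambda(f,Q)(x)=|f(x)-f_Q|\,w(x)^{-1},\qquad f_Q=\frac1{|Q|}\int_Q f(y)\,dy .
\]
For $f\in\mathfrak F$ we have $\int_Q\Lambda(f,Q)\,d\mu=\int_Q|f-f_Q|\,dx\le w(Q)\,\|f\|_{BMO_{*,w}}<\infty$, so the construction is admissible, and a direct computation gives, for any weight $v$ with respect to $\mu=w\,dx$,
\[
\|f\|_{X^r_v}^r=\sup_{Q\in\mathcal Q}\frac1{\int_Q vw\,dx}\int_Q|f(x)-f_Q|^{\,r}\,w(x)^{1-r}v(x)\,dx .
\]
Thus $X^1_1=BMO_{*,w}$ with $\|\cdot\|_{X^1_1}=\|\cdot\|_{BMO_{*,w}}$, while $X^r_v$ is precisely the space in the claimed equivalences, with $\rho(Q)=\int_Q vw\,dx$. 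Since we have taken $\mathfrak F=X=BMO_{*,w}$, Remark \ref{r:Xp=X} yields $X^p=X$ (and $X_1^p=X_1$) for all $0<p\le1$ for free, so the ``$X^p=X$'' hypotheses of Theorems \ref{smallnecessity} and \ref{necessity} are automatic. (Lemma \ref{BMOp=BMO} is not directly applicable here, because $\Lambda$ is not of the form $|f-f_B|$ with $f_B$ the $\mu$-average.)

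The one genuine ingredient is a John--Nirenberg inequality for $BMO_{*,w}$. Since $w\in A_\infty$, the measure $\mu=w\,dx$ is doubling, so $(\R^n,|\cdot|,\mu)$ is a space of homogeneous type and assumption {\it A2} holds for $(\mathcal B,\mu)$. From the theory of duals of weighted Hardy spaces (see \cite{GC} and the references therein) one has: if $w\in A_p$ for some $1<p<\infty$, then $X=BMO_{*,w}$ satisfies the $p$-power John--Nirenberg property on $[1,p']$; and if $w\in A_1$, then it satisfies it on all of $[1,\infty)$. This is the crux, and the point where the hypotheses enter quantitatively: the exponent $p'$ is exactly what forces the restriction $0<r\le p'/\sigma'$ in the second assertion. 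The main obstacle is proving this inequality with the correct range of exponents: a plain Calder\'on--Zygmund decomposition at height $\lambda$ controls $\sum_j|Q_j|$ for the stopping cubes but not $\sum_j\mu(Q_j)$, and one must exploit either the reverse H\"older exponent of $w$ (which gives the range up to $p'$) or the $A_1$ condition on $w$ (which gives the full range $[1,\infty)$).

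Granting this, I would finish as follows. For the first two assertions, apply Theorem \ref{smallnecessity} with $p_0=p'$. Its first conclusion gives that $X$ is weight invariant for $RH_{p_0'}(\mathcal B)=RH_p(w)$, and since $RH_p(w)\subset A_\infty(w\,dx)$ the accompanying quantitative estimates give $\|f\|_{X^1_v}\approx\|f\|_{X^1_1}=\|f\|_{BMO_{*,w}}$ for every $v\in RH_p(w)$ and every $f\in BMO_{*,w}$; unwinding the realization, this is exactly \eqref{notquite}. If in addition $v\in RH_\sigma(w)$ for some $\sigma>p=p_0'$, the last part of Theorem \ref{smallnecessity} gives that $X_v(\mathcal B,\Lambda)$ satisfies the $p$-power John--Nirenberg property on $(0,p_0/\sigma']=(0,p'/\sigma']$; combined with the elementary comparisons $\|f\|_{X^r_v}\le\|f\|_{X^1_v}$ for $r\le1$ and $\|f\|_{X^1_v}\le\|f\|_{X^r_v}$ for $r\ge1$ (Jensen/H\"older), this yields $\|f\|_{X^r_v}\approx\|f\|_{X^1_v}\approx\|f\|_{BMO_{*,w}}$ for all $0<r\le p'/\sigma'$ and all $f\in X_v=BMO_{*,w}$, which is the second displayed equivalence. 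For the third assertion, when $w\in A_1$ we have the $p$-power John--Nirenberg property on $[1,\infty)$ for the constant weight $w_0=1\in A_1(w\,dx)$, so Theorem \ref{necessity} (whose only remaining hypothesis, {\it A2}, was checked above) applies with this $w_0$ and shows that $X(\mathcal B,\Lambda)$ is weight invariant for all of $A_\infty(w\,dx)$ and that each $X_v(\mathcal B,\Lambda)$ is $p$-invariant on $(0,\infty)$; translating back, $\|f\|_{BMO_{*,w}}\approx\|f\|_{X^r_v}$ for every $v\in A_\infty(w\,dx)$, every $0<r<\infty$, and every $f\in BMO_{*,w}$, which is the third equivalence.
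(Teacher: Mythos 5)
Your proposal is correct and follows essentially the same route as the paper: the same realization $\mathfrak F=BMO_{*,w}$, $\mu=w\,dx$, $\Lambda(f,Q)=|f-f_Q|w^{-1}$, the same Garc\'ia-Cuerva/Muckenhoupt--Wheeden $[1,p']$ John--Nirenberg input, the same use of $\mathfrak F=X$ to get $X^p=X$ for $0<p\le1$, and the same application of Theorem \ref{smallnecessity} with $p_0=p'$. The only (harmless) deviation is the $A_1$ case, where you invoke Theorem \ref{necessity} with $w_0=1$ (after checking {\it A2} via doubling of $w\,dx$), whereas the paper simply lets $p$ range over $(1,\infty)$ and uses $A_1\subset\bigcap_{p>1}A_p$ together with $A_\infty(w)=\bigcup_{p>1}RH_p(w)$; both yield the third equivalence.
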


\begin{proof}
First fix $w\in A_p$ for some $1<p<\infty$.  Define 
$\mathfrak F={{BMO}_{*,w}}$ and  $\Lambda(f,Q)(x)=|f(x)-f_Q|w(x)^{-1}$.  If we let 
$d\mu(x)=w(x)dx$, then it follows that $X(\mathcal Q,\mu,\Lambda)={{BMO}_{*,w}}$ since
\begin{align*}
\|f\|_X&=\sup_{Q\in\mathcal Q}\frac{1}{\mu(Q)}\int_Q\Lambda(f,Q)(x)d\mu(x)=\sup_{Q\in\mathcal Q}\frac{1}{w(Q)}\int_Q|f(x)-f_Q|dx.
\end{align*}
It was proved independently by Muckenhoupt and Wheeden \cite[Theorem 4]{MW} and  Garcia-Cuerva \cite[Theorem II.4.4]{GC}  that 
\begin{align*}
\sup_{Q\in\mathcal Q}\(\frac{1}{w(Q)}\int_Q|f(x)-f_Q|^rw(x)^{1-r}dx\)^{1/r}\less\|f\|_{{BMO}_{*,w}}
\end{align*}
for all $f\in {BMO}_{*,w}$ and $1\leq r\leq p'$.  In other words, $X(\mathcal Q,\mu, \Lambda)$ satisfies the John-Nirenberg $p$-power inequality for $[1,p']$.  Note that since we defined $\mathfrak F={{BMO}_{*,w}}$, it follows immediately that $X^p(\mathcal Q,\mu, \Lambda)=X(\mathcal Q,\mu, \Lambda)$ for all $0<p<1$, and Theorem \ref{smallnecessity} can be applied to $X(\mathcal Q,\mu, \Lambda)$.  The first part of Theorem \ref{smallnecessity} implies that for any $v\in RH_p(w)$,
\begin{align*}
\|f\|_{{BMO}_{*,w}} & =\|f\|_X\approx\|f\|_{X_v} =\sup_{Q\in\mathcal Q}\frac{1}{\rho(Q)}\int_Q|f(x)-f_Q|v(x)dx,
\end{align*}
for all $f\in{{BMO}_{*,w}}$, where $\rho(Q)=\int_Q v(x)w(x)dx$.  Furthermore, if $v\in RH_\sigma(w)$ for some $\sigma>p$, then the second part of Theorem \ref{smallnecessity} implies that $X_v(\mathcal Q,\Lambda)$ is $p$-invariant for the interval $(0,p'/\sigma']$.  That is,
\begin{align*}
\|f\|_{{BMO}_{*,w}} & =\|f\|_X\approx\|f\|_{X_v^r}\\
& =\sup_{Q\in\mathcal Q}\(\frac{1}{\rho(Q)}\int_Q|f(x)-f_Q|^rw(x)^{1-r}v(x)dx\)^\frac{1}{r}
\end{align*}
for all $f\in{{BMO}_{*,w}}$, $0<r\leq p'/\sigma'$, and $v\in RH_\sigma(w)$ when $\sigma>p$.  This completes the proof for $w\in A_p$ with $1<p<\infty$.  

Now let $w\in A_1$.  Since it is known that $A_1\subset\bigcap_{p>1}A_p$ and $A_\infty(w)=\bigcup_{p>1}RH_p(w)$, it follows from the estimates proved above that 
\begin{align*}
\|f\|_{{BMO}_{*,w}} & \approx\sup_{Q\in\mathcal Q}\(\frac{1}{\rho(Q)}\int_Q|f(x)-f_Q|^rw(x)^{1-r}v(x)dx\)^\frac{1}{r}
\end{align*}
for all $f\in{{BMO}_{*,w}}$, $v\in A_\infty(w)$, and $0<r<\infty$.
\end{proof}

\begin{remark} {\rm
Unlike the situation of the previous applications, we are not providing a full characterization of the space in question in Theorem \ref{hbmo}. 
We only show that \eqref{notquite} holds if $f \in BMO_{*,w}$. Though we suspect that if the righthand of \eqref{notquite} is finite then $f$ must be in 
$BMO_{*,w}$, our methods do not seem to be able to establish that.}
\end{remark}

\begin{remark}
{\rm
We note that Theorem \ref{hbmo} can be extended to weighted $BMO_{*,w}(S)$ spaces defined in the context of a space of homogeneous type $(S,d,\mu)$.  Indeed, the only additional information needed to reproduce the proof of Theorem \ref{hbmo} is that the $p$-power John-Nirenberg estimate for the space $BMO_{*,w}$, proved in \cite{MW,GC}, can be extended to $BMO_{*,w}(S)$.  A recent paper by Trong and Tung \cite{TT} does exactly this, and hence Theorem \ref{hbmo} can also be extended to a space of homogeneous type setting as an application of Theorem \ref{smallnecessity}.
}
\end{remark}

\begin{remark}
{\rm
The space $BMO_{*,w}$ was used by Bloom in \cite{B} to characterize the boundedness of commutators of the classical Hilbert transform between weighted Lebesgue spaces with different weights.  In a recent work Holmes, Lacey, and Wick \cite{HLW},  extended Bloom's result and  characterized the two-weight boundedness  of commutators of Calder\'on-Zygmund  operators in $\rn$.  In particular, \cite[Theorem 1.1]{HLW}  shows the following:  Let $1<p<\infty$, $\mu,\lambda\in A_p$, $\nu=(\mu\cdot\lambda^{-1})^{1/p}$, $b\in BMO_{*,\nu}$, and $T$ be a Calder\'on-Zygmund operator (see \cite{HLW} for the precise condition on $T$).  Then the commutator 
$$[b,T]f=bT(f) -T(bf)$$
is bounded from $L^p(\mu)$ into $L^p(\lambda)$ and
\begin{align*}
\|[b,T]\|_{L^p(\mu),L^p(\lambda)}\less\|b\|_{BMO_{*,\nu}}.
\end{align*}
It is not hard to see that $\nu\in A_2$, which was proved in \cite{HLW} as well.  It is also not hard to verify that $\nu=\mu^{1/p}\lambda^{-1/p}\in A_2$ implies $\nu^{-1}=\mu^{-1/p}\lambda^{1/p}\in RH_2(\nu)$.  Then by Theorem \ref{hbmo}, it follows that
\begin{align}\label{HLWest}
\|[b,T]\|_{L^p(\mu),L^p(\lambda)}\less\sup_{Q\in\mathcal Q}\frac{1}{|Q|}\int_Q|b(x)-b_Q|\nu(x)^{-1}dx.
\end{align}
Conversely, if $\nu\in RH_\delta$ for some $1<\delta<\infty$ and $b\in L^1_{loc}$ satisfies
\begin{align}\label{bvest}
\sup_{Q\in\mathcal Q}\frac{1}{|Q|}\int_Q|b(x)-b_Q|^{\delta'}\nu(x)^{-\delta'}dx<\infty,
\end{align}
then $b\in BMO_{*,\nu}$, and $[b,T]$ is bounded from $L^p(\mu)$ into $L^p(\lambda)$, and the estimate in \eqref{HLWest} holds.  To verify this, it is enough to check that
\begin{align*}
\int_Q|b(x)-b_Q|dx&\leq\(\int_Q|b(x)-b_Q|^{\delta'}\nu(x)^{-\delta'}dx\)^{1/\delta'}\(\int_Q\nu(x)^{\delta}dx\)^{1/\delta}\\
&\leq[\nu]_{RH_\delta}\nu(Q)\(\frac{1}{|Q|}\int_Q|b(x)-b_Q|^{\delta'}\nu(x)^{-\delta'}dx\)^{1/\delta'},
\end{align*}
and then apply \cite[Theorem 1.1]{HLW} and Theorem \ref{hbmo}.

If one imposes more on $\mu$ and/or $\lambda$, other estimates can be obtained by applying Theorem \ref{hbmo}.  For example, letting $p$, $\mu$, $\lambda$, $\nu$, $b$, and $T$ be as above, it follows that

\begin{itemize}
\item if $\lambda^{1/p}\in RH_2(\nu)$, then 
\begin{align*}
\|[b,T]\|_{L^p(\mu),L^p(\lambda)}\less\sup_{Q\in\mathcal Q}\frac{1}{\mu^{1/p}(Q)}\int_Q|b(x)-b_Q|\lambda(x)^{1/p}dx.
\end{align*}
\item if $\lambda\in RH_2(\nu)$, then
\begin{align*}
\|[b,T]\|_{L^p(\mu),L^p(\lambda)}\less\sup_{Q\in\mathcal Q}\frac{1}{\mu^{1/p}\lambda^{1/p'}(Q)}\int_Q|b(x)-b_Q|\lambda(x)dx.
\end{align*}
\item if $\mu^{-1/p}\in RH_2(\nu)$, then
\begin{align*}
\|[b,T]\|_{L^p(\mu),L^p(\lambda)}\less\sup_{Q\in\mathcal Q}\frac{1}{\lambda^{-1/p}(Q)}\int_Q|b(x)-b_Q|\mu(x)^{-1/p}dx.
\end{align*}
\end{itemize}

Furthermore, \cite[Theorem 1.2]{HLW} shows that when $1<p<\infty$, $\mu,\lambda\in A_p$, $\nu=(\mu\cdot\lambda^{-1})^{1/p}$, and $b\in BMO_{*,\nu}$, then $\|[b,R_j]\|_{L^p(\mu),L^p(\lambda)}\approx \|b\|_{BMO_{*,\nu}}$, where $R_j$ is the $j^{\text{th}}$ Riesz transform.  Hence, under the same assumptions, it follows that 
\begin{align}\label{Riesznorm}
\|[b,R_j]\|_{L^p(\mu),L^p(\lambda)}\approx \sup_{Q\in\mathcal Q}\frac{1}{|Q|}\int_Q|b(x)-b_Q|\nu(x)^{-1}dx.
\end{align}
Similarly, in each of the situations discussed above where $\|[b,T]\|_{L^p(\mu),L^p(\lambda)}$ is bounded by some maximal oscillation expression of $b$, one can obtain the same lower estimate for the commutator operator norm when $T=R_j$ is a Riesz transform.

Finally, if $\nu\in RH_\delta$ for some $1<\delta<\infty$ and $b\in L^1_{loc}$ satisfies \eqref{bvest}, then $[b,R_j]$ is bounded and \eqref{Riesznorm} holds.  Similar to the discussion above, this can be proved by noting that \eqref{bvest} implies $b\in BMO_{*,\nu}$ and then apply \cite[Theorem 1.2]{HLW}, followed by Theorem \ref{hbmo}.
}
\end{remark}

\subsection{BMO spaces associated to operators} 
There is a very extensive literature about $BMO$-type spaces defined by expression of the form 
$$
\sup_{Q\in\mathcal Q} \frac{1}{|Q|}\int_Q|f-S_{t_Q}f| dx,
$$
where $t_Q$ is a parameter associated to the cube $Q$ and $\{S_t\}_t>0$ is a semigroup, an approximation to the identity, or other appropriate family of operators. Moreover, further generalizations based on the approach in \cite{FPW}, with more general oscillating functional than $|f-S_{t_Q}f|$, and in weighted or different geometric context have been considered too.  We refer to the works of Duong-Yang \cite{DY1,DY}, Hofmann-Mayboroda \cite{HM}, Jimenez-del-Toro \cite{J-T}, Jimenez-del-Toro-Martell \cite{JM1}, Bernicot-Zhao \cite{BZ},   Bernicot-Martell \cite{BM}, and Bui-Dong \cite{BD}, to name a few.  

Although we will not pursue the analysis of these spaces here, it is easy to also represent them as $X(\mathcal B,\Lambda)$  for appropriate $\Lambda$'s. The interested reader could use exponential John-Nirembeg inequalities obtained in the mentioned works to obtain $p$-invariance properties and hence conclude using our methods weight invariance as well.

\subsection{Weighted little bmo}
Let $A_p(\R^{n_1}\times\R^{n_2})$ be the Muckenhoupt classes associated to rectangles of the form $R=Q_1\times Q_2$ for cubes $Q_1\subset\R^{n_1}$ and $Q_2\subset\R^{n_2}$.  We denote the class of all such rectangles by $\mathcal R=\mathcal R(\R^{n_1}\times\R^{n_2})$.  Let $w\in A_\infty(\R^{n_1}\times\R^{n_2})$.  For a  function $f\in L^1_{loc}(w)$, define
\begin{align*}
\|f\|_{bmo_w}=\sup_{R\in\mathcal R}\frac{1}{w(R)}\int_R|f(x)-f_{w,R}|w(x)dx,
\end{align*}
where $f_{w,R}=\frac{1}{w(R)}\int_Rf(x)dx$.  Let $bmo_w(\R^{n_1}\times\R^{n_2})$ be the collection of all $w$-locally integrable functions $f$ modulo constants such that $\|f\|_{bmo_w}<\infty$.  We include this notation of $bmo_w(\R^{n_1}\times\R^{n_2})$ to help with our presentation, but later, as a consequence of Theorem \ref{littlebmo}, we will see that the spaces $bmo_w(\R^{n_1}\times\R^{n_2})$ in fact coincide for all $w\in A_\infty(\R^{n_1}\times\R^{n_2})$.  This reproduces Muckenhoupt and Wheeden's result for $BMO$ in \cite{MW}.  To simplify the notation, we will write $bmo_w=bmo_w(\R^{n_1}\times\R^{n_2})$  with the convention that we are working with a fixed decomposition $\R^{n_1}\times\R^{n_2}$ of $\R^n$.  As usual, we also write $bmo$ in place of $bmo_1$ when $w=1$.

The proof of the next lemma is adapted from the proof of John-Nirenberg inequality in the lecture notes by Journ\'e \cite[pp. 31-32]{J}.

\begin{lemma}\label{l:JNbmo}
Let $w\in A_\infty(\R^{n_1}\times\R^{n_2})$.  There exists a constant $\eta>0$ such that 
\begin{align*}
\sup_{R\in\mathcal R}\frac{1}{w(R)}\int_Re^{|f(x)-f_{w,R}|/(\eta\|f\|_{bmo_w})}w(x)dx<\infty
\end{align*}
for all $f\in bmo_w$.
\end{lemma}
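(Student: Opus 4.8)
The plan is to reduce the bi-parameter John--Nirenberg inequality for $bmo_w$ to the one-parameter theory already available (via $BMO_{\mu_w}$ on a space of homogeneous type, cf. Theorem \ref{wbmoHT}), applied successively in each of the two variables. Fix $f\in bmo_w$ and normalize so that $\|f\|_{bmo_w}=1$. For a rectangle $R=Q_1\times Q_2$ and $x=(x_1,x_2)$, write $f^{x_2}(x_1)=f(x_1,x_2)$ for the slice. The key observation is that the defining supremum for $bmo_w$ controls, for a.e.\ fixed $x_2$, the one-parameter oscillation of $x_1\mapsto f(x_1,x_2)$ over cubes $Q_1$ with respect to the slice measure $w(\cdot,x_2)\,dx_1$; but this needs care because the weight and the averaging must be handled so that Fubini applies cleanly. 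The cleaner route, following Journ\'e, is to argue directly on the distribution function.

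First I would set up a Calder\'on--Zygmund-type stopping-time decomposition adapted to the product structure: given $\lambda$ large, one performs the stopping-time argument in the $x_1$-variable at height $\lambda$ (uniformly in $x_2$, on the one-parameter slices), producing for each $x_2$ a collection of stopping cubes in $\R^{n_1}$ on which the averaged oscillation exceeds $\lambda$, while off their union $|f(x_1,x_2)-(\text{average})|\le c\lambda$. The one-parameter John--Nirenberg machinery (available because $(\R^{n_1},|\cdot|,w(\cdot,x_2)dx_1)$ and the rectangular base reduce to the homogeneous-type setting, and because $w\in A_\infty(\R^{n_1}\times\R^{n_2})$ forces the slices to lie in $A_\infty(\R^{n_1})$ with uniformly bounded characteristic) gives geometric decay of the $w$-measure of the level sets in $x_1$. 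Then I would iterate in the $x_2$-variable: on each stopping cube $Q_1$ produced above, apply the same argument to $x_2\mapsto f_{Q_1}(x_2)$ (a suitable average over $Q_1$), again using that the $bmo_w$ norm controls the $x_2$-oscillation of these partial averages. Combining the two geometric decays yields
\begin{align*}
\frac{1}{w(R)}\,w\bigl(\{x\in R:|f(x)-f_{w,R}|>\lambda\}\bigr)\le c_1 e^{-c_2\lambda}
\end{align*}
with $c_1,c_2$ depending only on $n_1,n_2$ and $[w]_{A_\infty}$. Integrating $e^{t|f-f_{w,R}|}=1+t\int_0^\infty e^{ts}\,w(\{|f-f_{w,R}|>s\})\,ds/w(R)\cdots$ over the distribution function for $t<c_2$, i.e.\ choosing $\eta$ so that $1/(\eta)<c_2$, gives the claimed uniform exponential integrability, establishing the lemma.

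The main obstacle I expect is controlling the interaction between the two variables in the stopping-time decomposition: after stopping in $x_1$, the ``error'' on the complement of the stopping cubes is controlled pointwise in $x_1$ but one must feed the right quantity (an average of $f$ over $Q_1$, as a function of $x_2$) into the second stopping time, and verify that its $x_2$-oscillation is genuinely dominated by $\|f\|_{bmo_w}$ and not merely by something larger. This is exactly the point where the \emph{little} $bmo$ (rather than product $BMO$) hypothesis is essential: the supremum is over all rectangles, so averaging in one variable and taking oscillation in the other is legitimate. A secondary technical point is the uniformity of the one-parameter John--Nirenberg constants across the slices, which follows since $w\in A_\infty(\R^{n_1}\times\R^{n_2})$ implies, by Fubini and the definition of the rectangular $A_p$ condition, that for a.e.\ $x_2$ the slice $w(\cdot,x_2)$ is in $A_p(\R^{n_1})$ with $[w(\cdot,x_2)]_{A_p(\R^{n_1})}\le [w]_{A_p(\R^{n_1}\times\R^{n_2})}$, and similarly in the other variable; this keeps $c_1,c_2$ independent of the slice. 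Once this bookkeeping is in place, the remainder is the standard passage from the good-$\lambda$/geometric-decay estimate to exponential integrability, as in Journ\'e's notes.
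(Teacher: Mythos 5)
Your overall strategy --- reducing to the one-parameter weighted John--Nirenberg theorem by working on slices and on partial averages --- is genuinely different from the paper's proof, and it can be made to work, but as written it has concrete gaps at exactly the point you flag. First, the two-stage stopping construction is not well posed: the stopping cubes in $\R^{n_1}$ are produced separately for each slice $x_2$, so they form an $x_2$-dependent family, and the instruction ``on each stopping cube $Q_1$ apply the argument to $x_2\mapsto f_{Q_1}(x_2)$'' has no fixed cube on which that function lives. The repair is to drop the first stopping time altogether: for a fixed $R=Q_1\times Q_2$ split $|f(x)-f_{w,R}|\le |f(x_1,x_2)-h(x_2)|+|h(x_2)-f_{w,R}|$, where $h(x_2)=\frac{1}{w_{x_2}(Q_1)}\int_{Q_1}f(x_1,x_2)w(x_1,x_2)\,dx_1$ and $w_{x_2}=w(\cdot,x_2)$, and apply the one-parameter Muckenhoupt--Wheeden inequality to the slices for the first term and to $h$ for the second. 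But then you must prove, not merely assert: (i) $\sup_{x_2}\|f(\cdot,x_2)\|_{BMO_{w_{x_2}}}\lesssim\|f\|_{bmo_w}$, which in the weighted setting needs a differentiation argument (shrink $Q_2\downarrow\{x_2\}$ at Lebesgue points of $y_2\mapsto\int_{Q_1}|f-c|w\,dx_1$ and $y_2\mapsto\int_{Q_1}w\,dx_1$, for a countable family of $Q_1$ and $c$); and (ii) that the correct reference weight for $h$ is the marginal $W_{Q_1}(x_2)=\int_{Q_1}w(x_1,x_2)\,dx_1$, for which $[W_{Q_1}]_{A_p(\R^{n_2})}\le[w]_{A_p}$ by Jensen, $h_{W_{Q_1},Q_2}=f_{w,R}$, and $\|h\|_{BMO_{W_{Q_1}}}\le\|f\|_{bmo_w}$ because the supremum in $bmo_w$ runs over all rectangles. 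In particular ``a suitable average over $Q_1$'' must be the slice-weighted average; the unweighted $f_{Q_1}(x_2)$ does not mesh with the weighted structure, and neither $w$ itself nor Lebesgue measure is the right measure for the oscillation of $h$ in $x_2$. These are the steps where the little-$bmo$ hypothesis and the rectangular $A_\infty$ condition actually enter, and they are left as announced obstacles rather than carried out.

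For comparison, the paper avoids slicing entirely: it runs Journ\'e's argument directly on rectangles, the key observation being that the dyadic sub-rectangles of $R$ having the same eccentricity as $R$ behave like a one-parameter dyadic lattice, so the Calder\'on--Zygmund stopping selection at height $\lambda$ applies verbatim, with the jump $|f_{w,R_j}-f_{w,R}|\le D_w^2\lambda$ controlled by the rectangle-doubling constant $D_w$ of $w\in A_\infty(\R^{n_1}\times\R^{n_2})$, followed by the standard recursion for the truncated functional $T_N(\eta)$ and monotone convergence. That route uses only doubling of $w$ over rectangles and no Fubini or differentiation bookkeeping; yours, once the items above are supplied, buys a reduction to known one-parameter theorems (and slice/marginal uniformity of the weight) at the cost of that measure-theoretic work.
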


\begin{proof}
Let $w\in A_\infty(\R^{n_1}\times\R^{n_2})$ and $f\in bmo_w$ with norm $1$.  For $N\in\N$ and $\eta>0$, define
\begin{align*}
T_N(\eta)=\sup_{R\in\mathcal R}\frac{1}{w(R)}\int_Re^{\min(|f(x)-f_{w,R}|,N)/\eta}w(x)dx.
\end{align*}
Now fix $R=Q_1\times Q_2\in\mathcal R$, $\lambda>0$ (to be specified later), and choose $R_j=Q_{1,j}\times Q_{2,j}$ to be disjoint dyadic sub rectangles of $R$ such that $\ell(Q_{1,j})/\ell(Q_{2,j})=\ell(Q_{1})/\ell(Q_{2})$,
\begin{align*}
\lambda<\frac{1}{w(R_j)}\int_{R_j}|f(x)-f_{w,R}|w(x)dx\leq  D_w^2\lambda
\end{align*}
for all $j$, and $|f(x)-f_{w,R}|\leq\lambda$ almost everywhere on $R\backslash\bigcup_jR_j$.  Here $D_w$ denotes the doubling constant for $w$; i.e. $D_w$ the constant so that $w(2R)\leq D_ww(R)$ for all $R\in\mathcal R$.  (We use $ D_w^2$ here since for any dyadic rectangle $R$, $2^2R=4R$ engulfs its dyadic father).  Note that this selection implies that $|f_{w,R}-f_{w,R_j}|\leq  D_w^2\lambda$ for all $j$.  Then it follows that
\begin{align*}
\int_Re^{\min(|f(x)-f_{w,R}|,N)/\eta}w(x)dx&\leq\int_{R\backslash\bigcup_jR_j}e^{|f(x)-f_{w,R}|/\eta}w(x)dx\\
&\hspace{1cm}+\sum_j\int_{R_j}e^{\min(|f(x)-f_{w,R}|,N)/\eta}w(x)dx\\
&\hspace{-3.25cm}\leq e^{\lambda/\eta}w(R)+\sum_je^{|f_{w,R_j}-f_{w,R}|/\eta}\int_{R_j}e^{\min(|f(x)-f_{w,R_j}|,N)/\eta}w(x)dx\\
&\hspace{-3.25cm}\leq e^{\lambda/\eta}w(R)+\frac{e^{ D_w^2\lambda/\eta}T_N(\eta)}{\lambda}\sum_j\int_{R_j}|f(x)-f_{w,R}|w(x)dx\\
&\hspace{-3.25cm}\leq \(e^{\lambda/\eta}+\frac{e^{ D_w^2\lambda/\eta}T_N(\eta)}{\lambda}\)w(R).
\end{align*}
Dividing both sides by $w(R)$ and taking the supremum over all $R\in\mathcal R$, it follows that
\begin{align*}
T_N(\eta)&\leq e^{\lambda/\eta}+\frac{e^{ D_w^2\lambda/\eta}T_N(\eta)}{\lambda}.
\end{align*}
Finally fix $\lambda$ and $\eta$ so that $\frac{e^{ D_w^2\lambda/\eta}}{\lambda}\leq\frac{1}{2}$ (for example we can just set $\lambda=\eta=2e^{ D_w^2}$ to obtain $\frac{e^{ D_w^2\lambda/\eta}}{\lambda}=\frac{1}{2}$).  Then it follows that $T_N(\eta)\leq 2e^{\lambda/\eta}$ for all $N\in\N$ (note that truncating by $N$ in $T_N(\eta)$ assures us that $T_N(\eta)<\infty$).  Then for every $R\in\mathcal R$ and with $\lambda,\eta$ selected as above, it follows that by monotone convergence that
\begin{align*}
\frac{1}{w(R)}\int_Re^{|f(x)-f_{w,R}|/\eta}w(x)dx&\leq\lim_{N\rightarrow\infty}T_N(\eta)\leq2e^{\lambda/\eta}.
\end{align*}
This completes the proof.
\end{proof}

Some standard consequences of Lemma \ref{l:JNbmo} are that for any weight $w$ in $A_\infty(\R^{n_1}\times\R^{n_2})$ and $f$ in $bmo_w$, there exist constants $c_1,c_2>0$ such that
\begin{align*}
w(\{x\in R:|f(x)-f_{w,R}|>\lambda\})\leq c_1w(R)e^{-c_2\lambda/\|f\|_{bmo_w}}
\end{align*}
for all $R\in\mathcal R$ and $\lambda>0$, and that
\begin{align}\label{littlebmoineq}
\|f\|_{bmo_w^p}^p:=\sup_{R\in\mathcal R}\frac{1}{w(R)}\int_R|f(x)-f_{w,R}|^pw(x)dx\less\|f\|_{bmo_w}^p
\end{align}
for all $1\leq p<\infty$.

\begin{theorem}\label{littlebmo}
Let $0<p<\infty$, $w,v\in A_\infty(\R^{n_1}\times\R^{n_2})$, and $f$ be a complex-valued function on $\R^n$.  Then $f$ is in $L^1_{loc}(v)$ and satisfies
\begin{align*}
\sup_{R\in\mathcal R}\(\frac{1}{w(R)}\int_R|f(x)-f_{v,R}|^pw(x)dx\)^\frac{1}{p}<\infty
\end{align*}
if and only if $f$ is in $bmo$.  In such a case,
\begin{align}\label{solobmo2}
\|f\|_{bmo}\approx\sup_{R\in\mathcal R}\(\frac{1}{w(R)}\int_R|f(x)-f_{v,R}|^pw(x)dx\)^\frac{1}{p}.
\end{align}
\end{theorem}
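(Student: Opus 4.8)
The plan is to transcribe, almost verbatim, the three-step scheme used to prove Theorem~\ref{wbmo}, replacing cubes by rectangles throughout. First I would set the base $\mathcal B=\mathcal R$ and let $\mu$ be Lebesgue measure on $\R^n=\R^{n_1}\times\R^{n_2}$; for $v\in A_\infty(\R^{n_1}\times\R^{n_2})$ I set $\mathfrak F_v=L^1_{loc}(v)$ and $\Lambda_v(f,R)(x)=|f(x)-f_{v,R}|$, so that $X(\mathcal R,\mathfrak F_1,\mu,\Lambda_1)=bmo$ and $\|f\|_{X_w^p(\mathcal R,\Lambda_v)}$ is precisely the right-hand side of \eqref{solobmo2}. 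It then suffices to prove $X_w^p(\mathcal R,\Lambda_v)\approx X(\mathcal R,\Lambda_1)$ for all $0<p<\infty$ and all $w,v\in A_\infty(\R^{n_1}\times\R^{n_2})$. As observed right after the proof of Theorem~\ref{wbmo}, that scheme uses only three inputs: that assumption~{\it A2} holds for $(\mathcal R,\mu)$; that $X_v(\mathcal R,\Lambda_v)$ satisfies the $p$-power John--Nirenberg property on $[1,\infty)$ for every $v\in A_\infty(\R^{n_1}\times\R^{n_2})$; and that the oscillation functional has the form $|f(x)-c_R|$ with $c_R$ the average of $f$ over $R$ with respect to some measure, here $d\mu'=v\,dx$. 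The first input is the reverse H\"older inequality for rectangular $A_\infty$ weights (see the discussion around Remark~\ref{Delta+K} and the references therein); the second is inequality~\eqref{littlebmoineq}, a standard consequence of the exponential John--Nirenberg inequality for $bmo_w$ proved in Lemma~\ref{l:JNbmo}; the third is immediate from the definition of $\Lambda_v$.

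With these at hand I would carry out Step~1: apply Lemma~\ref{BMOp=BMO}, viewing $X_v(\mathcal R,\Lambda_v)$ as the oscillation space attached to the base $\mathcal R$ and the measure $v\,dx$, to upgrade the $[1,\infty)$ $p$-invariance to $p$-invariance on all of $(0,\infty)$ and to obtain $X_v^p(\mathcal R,\Lambda_v)=X_v(\mathcal R,\Lambda_v)$ for $0<p<1$; then invoke Theorem~\ref{necessity} with $w_0=v$ to get weight invariance of $X(\mathcal R,\Lambda_v)$ over $A_\infty(\R^{n_1}\times\R^{n_2})$, which combined with the preceding gives $X_w^p(\mathcal R,\Lambda_v)\approx X_v^q(\mathcal R,\Lambda_v)$ for all $1\le p<\infty$ and $0<q<\infty$. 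Step~2 compares $X_v(\mathcal R,\Lambda_v)$ with $bmo=X(\mathcal R,\Lambda_1)$: the bound $|f(x)-f_R|\le|f(x)-c|+|f_R-c|$, the infimum characterization in Lemma~\ref{BMOp=BMO}(a), the elementary estimate $\|f\|_{X_v(\mathcal R,\Lambda_v)}\le 2\|f\|_{X_v(\mathcal R,\Lambda_1)}$, and Step~1 specialized to $v=1$ (which supplies the weight invariance of $bmo$ with the Lebesgue-averaged oscillation) together yield $X_v(\mathcal R,\Lambda_v)\approx bmo$; along the way Step~1 shows that every $f\in X_v(\mathcal R,\Lambda_v)$ lies in $L^1_{loc}$, and conversely any $f\in bmo$ is locally exponentially integrable by Lemma~\ref{l:JNbmo} with $w\equiv1$, hence in $L^{\delta'}_{loc}$ for every $\delta'$, so $fv\in L^1_{loc}$ whenever $v\in RH_\delta$ by H\"older, i.e.\ $f\in L^1_{loc}(v)$.

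The last piece, Step~3 (the range $0<p<1$), is where the weight inside the oscillation functional and the weight in the integral no longer match, so Lemma~\ref{BMOp=BMO} cannot be applied directly; this is the only step that needs a genuine computation. The inclusion $X_v(\mathcal R,\Lambda_v)\approx X_w(\mathcal R,\Lambda_v)\subset X_w^p(\mathcal R,\Lambda_v)$ is immediate from Jensen's inequality and Step~1. For the converse, given $f\in L^1_{loc}(v)$ with the right side of \eqref{solobmo2} finite, one picks $1<q,\delta<\infty$ with $w\in A_q(\mathcal R)$ and $v\in RH_\delta(\mathcal R)$ (possible by {\it A2}), puts $\epsilon=p/(q\delta')<1$, and repeats the H\"older estimate from the proof of Theorem~\ref{necessity} to obtain
\begin{align*}
\frac{1}{v(R)}\int_R|f(x)-f_{v,R}|^\epsilon v(x)\,dx
&\le[v]_{RH_\delta}\,[w]_{A_q}^{1/(q\delta')}\Big(\frac{1}{w(R)}\int_R|f(x)-f_{v,R}|^pw(x)\,dx\Big)^{1/(q\delta')},
\end{align*}
so that $X_w^p(\mathcal R,\Lambda_v)\subset X_v^\epsilon(\mathcal R,\Lambda_v)$, and Step~1 closes the loop via $X_v^\epsilon(\mathcal R,\Lambda_v)\approx X_v(\mathcal R,\Lambda_v)\approx bmo$. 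This yields simultaneously the norm equivalence and the ``if and only if''; note that one does not need to postulate separately that $X^p=X$ for $0<p<1$, since the inclusion $X_w^p(\mathcal R,\Lambda_v)\subset bmo$ has just been established by hand. I expect the real difficulty to lie not in this bookkeeping but in the two structural facts it rests on, namely the product-type John--Nirenberg inequality of Lemma~\ref{l:JNbmo} (whose proof must substitute dyadic subrectangles of a fixed eccentricity and the doubling constant of $w$ for an honest Calder\'on--Zygmund decomposition) and the reverse H\"older property of rectangular $A_\infty$ weights; once those are in place, the rest is inherited from Theorem~\ref{wbmo}.
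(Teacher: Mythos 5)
Your proposal is correct and follows essentially the same route as the paper: the paper's proof of Theorem \ref{littlebmo} consists precisely of noting that \eqref{littlebmoineq} (via Lemma \ref{l:JNbmo}) and assumption \emph{A2} for $A_\infty(\mathcal R)$ hold, and then repeating the three-step argument of Theorem \ref{wbmo} using Theorem \ref{necessity} and Lemma \ref{BMOp=BMO}. The extra details you supply (the Step 3 H\"older computation with $\epsilon=p/(q\delta')$ and the local integrability of $f$ against $v$) are exactly the steps the paper leaves to the reader, so nothing is missing.
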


\begin{proof}
Given that we have proved \eqref{littlebmoineq} and that {\it A2} holds for $A_\infty(\mathcal R)$ as was verified for example in \cite{GCRdF},  this proof follows exactly as in previous cases using Theorem \ref{necessity} and Lemma \ref{BMOp=BMO}.
\end{proof}

\begin{remark} 
{\rm
Recall from Remark \ref{Delta+K} that if we take $\Delta(p,t)=1+\frac{1}{2^{p+2}t}$ and $K(p,t)=2$, and $n_1=n_2=1$ as is the situation in \cite{LPR}, then the classes $A_p(\R\times\R)$ satisfies {\it A4}.  Applying Proposition \ref{c:Psiestimate}, it follows that 
\begin{align*}
\Psi_p(t)\leq 2c_{1,1+2^{p+2}t}\less 2^pt
\end{align*}
for $1<p<\infty$ and $t\geq1$.  Note that the linear estimate $c_{1,p}\less p$ holds for $1<p<\infty$ as a consequence of the John-Nirenberg inequality proved in Lemma \ref{l:JNbmo}.  So for $w\in A_p(\R\times\R)$, we set $t=[w]_{A_p}$ to obtain 
$$b_{1,w}\leq\Psi_p([w]_{A_p})\less 2^p[w]_{A_p}.$$
In particular, 
$$\sup_{R\in\mathcal R}\frac{1}{w(R)}\int_R|f(x)-f_R|w(x)dx\less 2^p[w]_{A_p}\|f\|_{bmo}$$
for any $f\in bmo$, $1<p<\infty$, and $w\in A_p(\R\times\R)$, where the suppressed constant does not depend on $f$, $p$, or $w$.
}
\end{remark}

\subsection{Discrete Triebel-Lizorkin spaces}

We consider now the invariance of certain discrete Triebel-Lizorkin spaces defined by Frazier and Jawerth \cite{FJ2}. Let $\mathcal Q_d \subset \mathcal Q$ be the collection of standard dyadic cubes in $\rn$ and let $w\in A_\infty^d=A_\infty(\mathcal Q_d,dx)$ be dyadic  version of $A_\infty$ defined in terms of $\mathcal Q_d$.

\begin{theorem}\label{dtlbmo}
Let $\alpha\in\R$, $0<q <\infty$, $0<p<\infty$.  Then for any sequence $\{s_Q\}_{Q\in\mathcal Q_d}$ indexed by dyadic cubes $\mathcal Q_d$, we have
\begin{align*}
\|\{s_Q\}\|_{\dot f_\infty^{\alpha,q}}&:=\sup_{P\in\mathcal Q_d}\(\frac{1}{|P|}\int_P\sum_{Q\in\mathcal Q_d:Q\subset P}(|Q|^{-1/2-\alpha/n}|s_Q|\chi_Q(x))^qdx\)^\frac{1}{q}\\
&\hspace{-1.5cm}\approx\sup_{P\in\mathcal Q_d}\[\frac{1}{w(P)}\int_P\(\sum_{Q\in\mathcal Q_d:Q\subset P}(|Q|^{-1/2-\alpha/n}|s_Q|\chi_Q(x))^q\)^\frac{p}{q}w(x)dx\]^\frac{1}{p},
\end{align*}
where $w(P)=\int_Pw(x)dx$.
\end{theorem}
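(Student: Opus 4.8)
The plan is to recognize both sides of the asserted equivalence as norms on a single oscillation space $X(\mathcal B,\Lambda)$ and then to let Theorem \ref{necessity} do the work. Concretely, I would take $\mathcal B=\mathcal Q_d$, $\mu$ the Lebesgue measure, $\mathfrak F=X^1(\mathcal Q_d,\Lambda)$, and, for a sequence $f=\{s_Q\}_{Q\in\mathcal Q_d}$ and $P\in\mathcal Q_d$,
\begin{align*}
\Lambda(f,P)(x)=\Big(\sum_{Q\in\mathcal Q_d:\,Q\subset P}\big(|Q|^{-1/2-\alpha/n}|s_Q|\chi_Q(x)\big)^q\Big)^{1/q},
\end{align*}
which is a non-negative $\mu$-measurable function. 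With this choice $\|f\|_{\dot f_\infty^{\alpha,q}}=\|f\|_{X^q(\mathcal Q_d,\Lambda)}$ and the right-hand side of the stated equivalence is exactly $\|f\|_{X_w^p(\mathcal Q_d,\Lambda)}$, so the theorem reduces to $X^q(\mathcal Q_d,\Lambda)\approx X_w^p(\mathcal Q_d,\Lambda)$ for every $0<p<\infty$ and $w\in A_\infty^d$. Taking $\mathfrak F$ to be the oscillation space $X^1(\mathcal Q_d,\Lambda)$ itself makes the hypothesis ``$X_{w_0}^p=X_{w_0}$ for $0<p<1$'' of Theorem \ref{necessity} (with $w_0=1$) automatic, by the first part of Remark \ref{r:Xp=X}; alternatively one gives a routine modification of the stopping-time argument below.

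To invoke Theorem \ref{necessity} with $w_0=1$ I must verify two things. First, assumption {\it A2} for $(\mathcal Q_d,dx)$, i.e. $A_\infty^d=\bigcup_{1<\delta<\infty}RH_\delta(\mathcal Q_d,dx)$; this is the classical dyadic reverse H\"older inequality (see e.g. \cite{GCRdF}). Second, that $X^1(\mathcal Q_d,\Lambda)$ satisfies the $p$-power John-Nirenberg property on $[1,\infty)$, that is
\begin{align*}
\Big(\sup_{P\in\mathcal Q_d}\frac1{|P|}\int_P\Lambda(f,P)(x)^t\,dx\Big)^{1/t}\ \lesssim_{t}\ \sup_{P\in\mathcal Q_d}\frac1{|P|}\int_P\Lambda(f,P)(x)\,dx\qquad(1<t<\infty).
\end{align*}
The key structural fact making this a genuine John-Nirenberg situation is the additive-tail identity: for $P'\subset P$ in $\mathcal Q_d$ and $x\in P'$ one has $\Lambda(f,P)(x)^q=\Lambda(f,P')(x)^q+c_{P'}^q$, where $c_{P'}=\big(\sum_{P'\subsetneq Q\subset P}(|Q|^{-1/2-\alpha/n}|s_Q|)^q\big)^{1/q}$ is a constant on $P'$. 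Fixing $P$ and normalizing the right-hand side above to $1$, I would run a dyadic stopping time inside $P$, selecting the maximal subcubes on which a suitable average first exceeds a large multiple of $1$, use the identity to control the $c_{P'}$, and iterate in the usual way to obtain $|\{x\in P:\Lambda(f,P)(x)>\lambda\}|\lesssim|P|e^{-c\lambda}$; integrating $\lambda^{t}$ gives the displayed inequality. This self-improvement is of John-Nirenberg type and is essentially contained in the work of Frazier and Jawerth \cite{FJ2}.

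Once {\it A2} and the self-improvement are in place, Theorem \ref{necessity} yields that $X(\mathcal Q_d,\Lambda)$ is weight invariant over $A_\infty^d$ and that $X_w(\mathcal Q_d,\Lambda)$ is $p$-invariant on $(0,\infty)$ for every $w\in A_\infty^d$; combined with Jensen's inequality with respect to the probability measure $w\,dx/w(P)$ this gives $\|f\|_{X_w^p}\approx\|f\|_{X_1^1}\approx\|f\|_{X_1^q}=\|f\|_{\dot f_\infty^{\alpha,q}}$ for all $0<p<\infty$ and $w\in A_\infty^d$, with the two sides simultaneously finite or infinite, which is the assertion. I expect the main obstacle to be the self-improvement step: although the additive-tail identity makes the mechanism transparent, executing the stopping-time John-Nirenberg argument cleanly for the $\ell^q$-valued functional (uniformly in $1<t<\infty$, and with the linear dependence of $c_{1,t}$ on $t$ that feeds Proposition \ref{c:Psiestimate}) requires some care, including the handling of $0<q<1$ where $\Lambda(f,P)$ is only quasi-subadditive; everything else is bookkeeping within the framework of Sections~3--4.
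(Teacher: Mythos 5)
Your proposal is correct in substance and follows the same overall route as the paper: realize both sides as norms of an oscillation space over $\mathcal Q_d$ with Lebesgue measure, establish the unweighted $p$-power John--Nirenberg property, secure the $X^p=X$ condition for $0<p<1$, and invoke Theorem \ref{necessity} (assumption \emph{A2} for dyadic $A_\infty$ being classical). The one genuine difference is how the key self-improvement is obtained: the paper simply quotes \cite[Corollary 5.7]{FJ2}, which gives the equivalence of the $L^p$-averaged expressions for all $0<p<\infty$ and for every sequence, whereas you sketch a direct stopping-time proof based on the additive-tail identity $\Lambda(f,P)^q=\Lambda(f,P')^q+c_{P'}^q$. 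That identity is correct, and your sketch is sound; moreover, if you run the Calder\'on--Zygmund iteration at the level of the additive quantity $\Lambda^q$ (using the exact level-set identity $\{\Lambda(f,P)>\lambda\}\cap P'=\{\Lambda(f,P')^q>\lambda^q-c_{P'}^q\}\cap P'$, with $c_{P'}\leq\inf_{P'}\Lambda(f,P)\lesssim\lambda_0$ by maximality), the quasi-subadditivity worry for $0<q<1$ disappears entirely; the decay one gets is of the form $e^{-c\lambda^q}$ rather than $e^{-c\lambda}$, which is still ample for all finite moments and hence for the John--Nirenberg property on $[1,\infty)$ with $c_{1,t}$ growing polynomially in $t$. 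So your route buys a self-contained reproof of the input the paper cites, at the cost of redoing an argument essentially contained in \cite{FJ2}.

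One bookkeeping point needs fixing. Taking $\mathfrak F=X^1(\mathcal Q_d,\Lambda)$ does make the hypothesis ``$X^p=X$ for $0<p<1$'' automatic (Remark \ref{r:Xp=X}), but it also restricts every space $X_w^p$ to sequences that already have finite unweighted norm; Theorem \ref{necessity} then only yields the equivalence for $\{s_Q\}\in\dot f_\infty^{\alpha,q}$, and your closing claim that the two sides are ``simultaneously finite or infinite'' is not delivered, since a sequence with finite weighted right-hand side but unknown left-hand side never enters the framework. The paper avoids this by taking $\mathfrak F$ to be the collection of all sequences and using that the estimate of \cite[Corollary 5.7]{FJ2} holds for every sequence and every $0<p<\infty$, so that $X^p=X$ holds as a genuine set equality for $p<1$ and the inclusion $X_w\subset X$ coming out of the proof of Theorem \ref{necessity} applies to arbitrary sequences. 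To get the full statement of Theorem \ref{dtlbmo} along your lines you should take $\mathfrak F$ to be all sequences and actually supply the $p<1$ comparison for all sequences (the ``routine modification'' you allude to, or simply the citation the paper uses); as written, that step is the only real gap, and it is a fixable one.
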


\begin{proof}
Let $\mathfrak F$ be the collection of sequences indexed by the collection of dyadic cubes $\mathcal Q_d$ in $\R^n$.  Define
\begin{align*}
\Lambda(\{s_Q\},P)(x)=\sum_{Q\in\mathcal Q_d:Q\subset P}(|Q|^{-1/2-\alpha/n}|s_Q|\chi_Q(x))^q
\end{align*}
for $P\in\mathcal Q_d$, $\{s_Q\}\in \mathfrak F$, and $x\in\R^n$.  By \cite[Corollary 5.7]{FJ2}, it follows that 
\begin{align*} 
\|\{s_Q\}\|_{X^p}^p&=\sup_{P\in\mathcal Q_d}\frac{1}{|P|}\int_P\[\sum_{Q\in\mathcal Q_d:Q\subset P}(|Q|^{-1/2-\alpha/n}|s_Q|\chi_Q(x))^q\]^pdx\\
&\approx\sup_{P\in\mathcal Q_d}\[\frac{1}{|P|}\int_P\sum_{Q\in\mathcal Q_d:Q\subset P}(|Q|^{-1/2-\alpha/n}|s_Q|\chi_Q(x))^qdx\]^p\\
&=\|\{s_Q\}\|_X^p. 
\end{align*}
Therefore $X(\mathcal Q_d,\Lambda)$ satisfies the John-Nirenberg $p$-power inequality for the interval  $(0,\infty)$. 
Since the characterization $\|\{s_Q\}\|_{X^p}\approx\|\{s_Q\}\|_X$ is valid for all sequences $\{s_Q\}\in\mathfrak F$, meaning in particular that if $\|\{s_Q\}\|_{X^p}$ is finite for some $0<p<\infty$ then it is finite for all $0<p<\infty$, it follows that $X^p(\mathcal Q_d,\Lambda)=X(\mathcal Q_d,\Lambda)$ for all $0<p<1$.   The proof is completed by applying Theorem \ref{necessity}.   \end{proof}

\subsection{Triebel-Lizorkin spaces}

Fix a function $\varphi\in\S$  (the usual Schwartz space of smooth rapidly decreasing  functions) such that $\widehat\varphi(\xi)$ is supported in $1/2<|\xi|<2$ and $\widehat\varphi(\xi)\geq c_0>0$ for $3/5<|\xi|<5/3$.  Also define $\varphi_k(x)=2^{kn}\varphi(2^kx)$.  For $\alpha\in\R$, $0<q<\infty$, and $w\in A_\infty$, define the homogeneous $p=\infty$ type Trieble-Lizorkin space $\dot F_{\infty,w}^{\alpha,q}$ to be the collection of all $f\in\S'/\mathcal P$  (tempered distribution modulo polynomials) such that
\begin{align*}
\|f\|_{\dot F_{\infty,w}^{\alpha,q}}=\sup_{Q\in\mathcal Q}\(\frac{1}{w(Q)}\int_Q\sum_{k\in\Z:2^{-k}\leq\ell(Q)}(2^{\alpha k}|\varphi_k*f(x)|)^qw(x)dx\)^\frac{1}{q}<\infty,
\end{align*}
where $w(Q)=\int_Qw(x)dx$.  When $w\equiv 1$, we simply write $\dot F_\infty^{\alpha,q}=\dot F_{\infty,1}^{\alpha,q}$.

The following lemma is implicit in the work in \cite{FJ2} as it will be seen from its proof.

\begin{lemma}\label{TLJN}  
Let $\alpha\in\R$ and $0<p,q<\infty$.  Then for $f\in\S'/\mathcal P$, we have
\begin{align*}
&\sup_{Q\in\mathcal Q}\frac{1}{|Q|}\int_Q\(\sum_{k\in\Z:2^{-k}\leq\ell(Q)}(2^{\alpha k}|\varphi_k*f(x)|)^q\)^pdx\less\|f\|_{\dot F_\infty^{\alpha,q}}^{pq}.
\end{align*}
\end{lemma}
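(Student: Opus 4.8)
The plan is to pass from the continuous Littlewood--Paley quantity to the discrete Frazier--Jawerth model, invoke the self-improvement already proved there in Theorem~\ref{dtlbmo}, and transfer the bound back. Throughout I would abbreviate $G(f,Q)(x)^q=\sum_{k\in\Z:2^{-k}\le\ell(Q)}(2^{\alpha k}|\varphi_k*f(x)|)^q$, so that the asserted left-hand side equals $\sup_{Q\in\mathcal Q}\frac{1}{|Q|}\int_Q G(f,Q)^{pq}\,dx$ while $\|f\|_{\dot F_\infty^{\alpha,q}}^q=\sup_{Q\in\mathcal Q}\frac{1}{|Q|}\int_Q G(f,Q)^q\,dx$. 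The first step is to reduce to dyadic cubes: given $Q\in\mathcal Q$, pick $j\in\Z$ with $2^{-j}\le\ell(Q)<2^{-j+1}$; then $\{k:2^{-k}\le\ell(Q)\}=\{k\ge j\}=\{k:2^{-k}\le\ell(P)\}$ for every dyadic $P$ of side $2^{-j}$, so $G(f,Q)=G(f,P)$ on $P$, and since $Q$ meets at most $3^n$ such $P$, each of measure $2^{-jn}\le|Q|$, one gets $\frac{1}{|Q|}\int_Q G(f,Q)^{pq}\,dx\le 3^n\sup_{P\in\mathcal Q_d}\frac{1}{|P|}\int_P G(f,P)^{pq}\,dx$. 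It therefore suffices to bound the right-hand side.

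Next I would discretize via the $\varphi$-transform of \cite{FJ2}: with an admissible reconstruction $\psi\in\S$ dual to $\varphi$ one has $f=\sum_{Q\in\mathcal Q_d}s_Q\psi_Q$ in $\S'/\mathcal P$, where $s_Q=\langle f,\varphi_Q\rangle$ and $\psi_Q(x)=|Q|^{-1/2}\psi(2^jx-m)$ for $Q=Q_{jm}$, together with $\|\{s_Q\}\|_{\dot f_\infty^{\alpha,q}}\less\|f\|_{\dot F_\infty^{\alpha,q}}$. The core estimate I would use is a pointwise domination of $G(f,P)$ over a Carleson box by the associated discrete square function: there is a dimensional constant $C\ge 1$ such that for all $P\in\mathcal Q_d$ and a.e.\ $x\in P$,
\begin{align*}
G(f,P)(x)^q\less\sum_{Q\in\mathcal Q_d:\,Q\subset CP,\ \ell(Q)\le\ell(P)}\big(|Q|^{-1/2-\alpha/n}|s_Q|\chi_Q(x)\big)^q+\|\{s_Q\}\|_{\dot f_\infty^{\alpha,q}}^q.
\end{align*}
This would follow by inserting the reconstruction, using the almost-orthogonality bound $|\varphi_k*\psi_Q(x)|\less_N|Q|^{-1/2}(1+2^k|x-x_Q|)^{-N}$, which is effective only for the $O(1)$ scales with $|\log_2(2^k\ell(Q))|\le 2$, and summing the contributions with a discrete Hardy/vector-valued maximal inequality; choosing $N$ large and then $C$ large, the spatial tails from cubes $Q\not\subset CP$ and the $O(1)$ coarse cubes with $\ell(Q)>\ell(P)$ get absorbed into the additive term, for the latter using $(|Q|^{-1/2-\alpha/n}|s_Q|)^q\le\|\{s_Q\}\|_{\dot f_\infty^{\alpha,q}}^q$ whenever $P\subset Q$. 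This is exactly the estimate underlying the $\dot F_\infty$ theory of \cite{FJ2}, and I would cite it rather than reprove it.

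To conclude, let $P_1,\dots,P_m$ with $m\le(C+1)^n$ be the dyadic cubes of side $\ell(P)$ meeting $CP$; every dyadic $Q\subset CP$ with $\ell(Q)\le\ell(P)$ lies in some $P_i$, and $\chi_Q$ is supported in $P_i$ there. Raising the displayed estimate to the $p$-th power and using $(\sum_i a_i)^p\le C_p\sum_i a_i^p$,
\begin{align*}
\frac{1}{|P|}\int_P G(f,P)^{pq}\,dx\less\sum_{i=1}^m\frac{1}{|P_i|}\int_{P_i}\Big(\sum_{Q\in\mathcal Q_d:\,Q\subset P_i}(|Q|^{-1/2-\alpha/n}|s_Q|\chi_Q(x))^q\Big)^p dx+\|\{s_Q\}\|_{\dot f_\infty^{\alpha,q}}^{pq}.
\end{align*}
By Theorem~\ref{dtlbmo} (equivalently \cite[Corollary~5.7]{FJ2}) each of the finitely many integrals is $\less\big(\frac{1}{|P_i|}\int_{P_i}\sum_{Q\subset P_i}(|Q|^{-1/2-\alpha/n}|s_Q|\chi_Q)^q\,dx\big)^p\le\|\{s_Q\}\|_{\dot f_\infty^{\alpha,q}}^{pq}$, which is in turn $\less\|f\|_{\dot F_\infty^{\alpha,q}}^{pq}$. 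Combining this with the dyadic reduction of the first step proves the lemma.

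The main obstacle is the pointwise domination in the second paragraph: converting the frequency-truncated continuous square function, restricted to a cube, into a spatially localized discrete sum while keeping the far-field and coarse-scale tails controlled uniformly in $P$. Everything else is either elementary bookkeeping (the dyadic reduction) or a direct appeal to the discrete self-improvement of Theorem~\ref{dtlbmo} together with the standard $\varphi$-transform bound $\|\{s_Q\}\|_{\dot f_\infty^{\alpha,q}}\less\|f\|_{\dot F_\infty^{\alpha,q}}$.
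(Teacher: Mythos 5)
Your overall route (reduce to dyadic cubes, pass to the discrete model of \cite{FJ2}, invoke the discrete self-improvement of Theorem \ref{dtlbmo}, i.e.\ \cite[Corollary 5.7]{FJ2}, and transfer back) is the same as the paper's, and your first and last steps are fine. The gap is the ``core estimate'' of your second paragraph: the claimed a.e.\ pointwise domination
\begin{align*}
\sum_{k:\,2^{-k}\le\ell(P)}(2^{\alpha k}|\varphi_k*f(x)|)^q\ \lesssim\ \sum_{Q\subset CP,\ \ell(Q)\le\ell(P)}\big(|Q|^{-1/2-\alpha/n}|s_Q|\chi_Q(x)\big)^q+\|\{s_Q\}\|_{\dot f_\infty^{\alpha,q}}^q,
\end{align*}
with $s_Q=\langle f,\varphi_Q\rangle$, is not an estimate stated in \cite{FJ2}, and the argument you sketch does not produce it. Writing $\varphi_k*f=\sum_{|i-k|\le1}\sum_{\ell(Q)=2^{-i}}s_Q\,\varphi_k*\psi_Q$, the decay $(1+2^k|x-x_Q|)^{-N}$ does handle the far field (for $Q$ outside $CP$ and $x\in P$ the factor is $\lesssim(2^k\ell(P))^{-N}$ with $2^k\ell(P)\ge1$, so those tails are summable over the scales), but it gives no smallness at all for the $O(1)$ cubes at scale $\sim2^{-k}$ that are adjacent to $x$ yet do not contain $x$: for these $(1+2^k|x-x_Q|)^{-N}\simeq1$ while $\chi_Q(x)=0$, so they are invisible on your right-hand side. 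The only mechanism your scheme offers is to absorb them into the additive term, which costs a fixed multiple of $\|\{s_Q\}\|_{\dot f_\infty^{\alpha,q}}^q$ at every one of the infinitely many scales $2^{-k}\le\ell(P)$, and that sum diverges. A ``discrete Hardy/vector-valued maximal inequality'' cannot rescue this, since such inequalities are integrated statements and what you need here is pointwise; the need to see neighboring coefficients is precisely why Frazier--Jawerth work with majorant sequences and prove norm equivalences rather than pointwise Carleson-box dominations of this type.

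The repair, and this is exactly what the paper does, is to change the coefficient sequence: set $\sup(f)_Q:=|Q|^{1/2}\sup_{y\in Q}|\varphi_k*f(y)|$ for $Q\in\mathcal Q_d$ with $\ell(Q)=2^{-k}$. Then the pointwise domination is trivial and exact, $2^{\alpha k}|\varphi_k*f(x)|\chi_Q(x)\le|Q|^{-1/2-\alpha/n}\sup(f)_Q\,\chi_Q(x)$, with no enlarged cube $CP$ and no error term, so summing over $Q\subset P$ and over the admissible scales puts you directly in the discrete setting. Now apply \cite[Corollary 5.7]{FJ2} to $\{\sup(f)_Q\}$ and conclude with the equivalence $\|\{\sup(f)_Q\}\|_{\dot f_\infty^{\alpha,q}}\approx\|f\|_{\dot F_\infty^{\alpha,q}}$ from \cite[Theorem 5.2 and (5.6)]{FJ2}; this replaces your bound $\|\{s_Q\}\|_{\dot f_\infty^{\alpha,q}}\lesssim\|f\|_{\dot F_\infty^{\alpha,q}}$. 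Your dyadic reduction in the first step can be kept as is (the paper in fact glosses over that point). With this substitution your argument becomes essentially the paper's proof.
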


\begin{proof}
Define for $f\in\S'/\mathcal P$
\begin{align*}
\sup(f)_Q=|Q|^{1/2}\sup_{y\in Q}|\varphi_k*f(y)|
\end{align*}
for $Q\in\mathcal Q_d$, where $\ell(Q)=2^{-k}$.  Let $\alpha\in\R$ and $0<p,q<\infty$.  Then for any $f\in\S'/\mathcal P$, we have
\begin{align*}
&\sup_{P\in\mathcal Q}\frac{1}{|P|}\int_P\(\sum_{k\in\Z:2^{-k}\leq\ell(P)}(2^{\alpha k}|\varphi_k*f(x)|)^q\)^pdx\\
&\hspace{.1cm}=\sup_{P\in\mathcal Q}\frac{1}{|P|}\int_P\(\sum_{k\in\Z:2^{-k}\leq\ell(P)} \,\,\, \sum_{\substack{Q\in\mathcal Q_d:Q\subset P,\\\,\ell(Q)=2^{-k}}}
(2^{\alpha k}|\varphi_k*f(x)|\chi_Q(x))^q\)^p dx\\
&\hspace{.1cm}\leq\sup_{P\in\mathcal Q}\frac{1}{|P|}\int_P\(\sum_{Q\in\mathcal Q_d:Q\subset P}(|Q|^{-1/2-\alpha/n}\sup(f)_Q\chi_Q(x))^q\)^pdx\\
&\hspace{.1cm}\less\sup_{P\in\mathcal Q}\(\frac{1}{|P|}\int_P\sum_{Q\in\mathcal Q_d:Q\subset P}(|Q|^{-1/2-\alpha/n}\sup(f)_Q\chi_Q(x))^qdx\)^p\\
&\hspace{.1cm}=\|\sup(f)\|_{\dot f_\infty^{\alpha,q}}^{pq}.
\end{align*}
The last inequality here holds by \cite[Corollary 5.7]{FJ2} applied with $\{s_Q\}=\{\sup(f)_Q\}$.  Also by \cite[Theorem 5.2 and equation (5.6)]{FJ2}, it  follows that $\|\sup(f)\|_{\dot f_\infty^{\alpha,q}}\approx\|f\|_{\dot F_\infty^{\alpha,q}}$, which completes the proof.  
\end{proof}

\begin{theorem}\label{tlbmo}
Let $\alpha\in\R$, $0<q,p<\infty$, and $w\in A_\infty$.  Then for any $f\in \dot F_\infty^{\alpha,q}$, we have
\begin{align*}
\|f\|_{\dot F_{\infty}^{\alpha,q}} &  \approx\sup_{Q\in\mathcal Q}\(\frac{1}{w(Q)}\int_Q\(\sum_{k\in\Z:2^{-k}\leq\ell(Q)}\!\!\!(2^{\alpha k}|\varphi_k*f(x)|)^q\)^\frac{p}{q}\!\!w(x)dx\)^\frac{1}{p}.
\end{align*}
\end{theorem}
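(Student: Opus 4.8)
The plan is to exhibit the right-hand side as a power of the norm of a suitable oscillation space and then apply Theorem \ref{necessity} with $w_0=1$. First I would take $\mathcal B=\mathcal Q$, $\mu$ the Lebesgue measure on $\R^n$, $\mathfrak F=\dot F_\infty^{\alpha,q}$, and---absorbing the inner $q$-th power into the oscillation functional, so that the abstract framework of Sections 3 and 4, which uses the exponent $1$ as its reference, applies directly---
\begin{align*}
\Lambda(f,Q)(x)=\sum_{k\in\Z:\,2^{-k}\le\ell(Q)}\big(2^{\alpha k}|\varphi_k*f(x)|\big)^q .
\end{align*}
With this choice $\|f\|_{X_1^1(\mathcal B,\Lambda)}=\|f\|_{\dot F_\infty^{\alpha,q}}^q$, so that $X_1^1(\mathcal B,\Lambda)=\dot F_\infty^{\alpha,q}=\mathfrak F$, while for every $r>0$ and $w\in A_\infty$,
\begin{align*}
\|f\|_{X_w^r(\mathcal B,\Lambda)}^r=\sup_{Q\in\mathcal Q}\frac{1}{w(Q)}\int_Q\Big(\sum_{k:\,2^{-k}\le\ell(Q)}\big(2^{\alpha k}|\varphi_k*f(x)|\big)^q\Big)^{\!r}w(x)\,dx .
\end{align*}

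Next I would verify the hypotheses of Theorem \ref{necessity}. Assumption {\it A2} for $A_\infty(\mathcal Q,dx)$ is the classical reverse H\"older inequality, exactly as in the earlier applications. Lemma \ref{TLJN}, applied with its outer exponent equal to an arbitrary $s>0$, says precisely that $\|f\|_{X_1^s(\mathcal B,\Lambda)}^s\lesssim\|f\|_{\dot F_\infty^{\alpha,q}}^{sq}=\|f\|_{X_1^1(\mathcal B,\Lambda)}^s$ for all $f\in\dot F_\infty^{\alpha,q}$; together with the elementary power-mean inequality $\|f\|_{X_1^1}\le\|f\|_{X_1^s}$ valid for $s\ge 1$, this gives the $p$-power John-Nirenberg property of $X_1^1(\mathcal B,\Lambda)$ on $[1,\infty)$. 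Moreover, since $\mathfrak F=X_1^1(\mathcal B,\Lambda)$, Remark \ref{r:Xp=X} forces $X_1^p(\mathcal B,\Lambda)=X_1^1(\mathcal B,\Lambda)$ as sets for all $0<p\le1$. Theorem \ref{necessity} with $w_0=1$ then yields that $X(\mathcal B,\Lambda)$ is weight invariant for $A_\infty(\mathcal B)$ and that $X_w(\mathcal B,\Lambda)$ is $p$-invariant on $(0,\infty)$ for every $w\in A_\infty(\mathcal B)$.

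To conclude, fix $w\in A_\infty$. The $p$-invariance of $X_w(\mathcal B,\Lambda)$ on $(0,\infty)$ together with the power-mean inequalities gives $\|f\|_{X_w^r}\approx\|f\|_{X_w^1}$ for every $r>0$, and the weight invariance gives $\|f\|_{X_w^1}\approx\|f\|_{X_1^1}=\|f\|_{\dot F_\infty^{\alpha,q}}^q$, both for $f\in\dot F_\infty^{\alpha,q}$. Applying this with $r=p/q$, which ranges over $(0,\infty)$ as the theorem's parameter $p$ does, and observing that the asserted right-hand side equals $\big(\|f\|_{X_w^{p/q}}^{p/q}\big)^{1/p}=\|f\|_{X_w^{p/q}}^{1/q}$, one obtains that it is comparable to $\big(\|f\|_{\dot F_\infty^{\alpha,q}}^q\big)^{1/q}=\|f\|_{\dot F_\infty^{\alpha,q}}$, which is the claim. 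The only point requiring care is the bookkeeping of exponents around the rescaled $\Lambda$, together with the observation that Lemma \ref{TLJN} supplies only the self-improving half of the $p$-invariance of $X_1^1$---the reverse half being automatic by convexity for exponents at least $1$---so that all the substantive work is carried by that lemma (itself extracted from the Frazier and Jawerth calculus) and by the abstract equivalence of $p$-invariance and weight invariance established in Theorem \ref{necessity}.
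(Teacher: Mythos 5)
Your proposal is correct and follows essentially the same route as the paper: the same choice of $\Lambda$ with the inner $q$-th power absorbed, Lemma \ref{TLJN} supplying the $p$-power John--Nirenberg property for $X(\mathcal Q,\Lambda)$, the case $\mathfrak F=X(\mathcal B,\Lambda)$ of Remark \ref{r:Xp=X} giving $X^p=X$ for $0<p<1$, and Theorem \ref{necessity} with $w_0=1$ yielding the weighted equivalence, followed by the same exponent bookkeeping $\|f\|_{\dot F_\infty^{\alpha,q}}=\|f\|_X^{1/q}\approx\|f\|_{X_w^{p/q}}^{1/q}$. No gaps; your extra remarks (the trivial H\"older direction for $s\ge1$ and the explicit identification of the right-hand side as $\|f\|_{X_w^{p/q}}^{1/q}$) only make explicit what the paper leaves implicit.
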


\begin{proof}
Let $\alpha\in\R$, $0<q,p<\infty$, $w\in A_\infty$, $\mathfrak F=\dot F_\infty^{\alpha,q}$ and 
\begin{align*}
\Lambda(f,Q)(x)=\sum_{k\in\Z:2^{-k}\leq\ell(Q)}(2^{\alpha k}|\varphi_k*f(x)|)^q.
\end{align*}
It follows that $X(\mathcal Q,\Lambda)=\dot F_\infty^{\alpha,q}$ and $\|\cdot\|_X=\|\cdot\|_{\dot F_\infty^{\alpha,q}}^q$.  By Lemma \ref{TLJN}, it follows that
\begin{align*}
&\sup_{Q\in\mathcal Q}\frac{1}{|Q|}\int_Q\(\sum_{k\in\Z:2^{-k}\leq\ell(Q)}(2^{\alpha k}|\varphi_k*f(x)|)^q\)^pdx\\
&\hspace{2cm}\less\sup_{Q\in\mathcal Q}\(\frac{1}{|Q|}\int_Q\sum_{k\in\Z:2^{-k}\leq\ell(Q)}(2^{\alpha k}|\varphi_k*f(x)|)^qdx\)^p
\end{align*}
for all $0< p<\infty$.  By Remark \ref{r:Xp=X}, it follows that $X(\mathcal Q,\Lambda)=X^p(\mathcal Q,\Lambda)$ for all $0<p<1$. 
 Then by Theorem \ref{necessity}, it follows that 
\begin{align*}
\|f\|_{\dot F_\infty^{\alpha,q}}&=\|f\|_X^{1/q}\approx\|f\|_{X_w^{p/q}}^{1/q}\\
&=\sup_{Q\in\mathcal Q}\[\frac{1}{w(Q)}\int_Q \(\sum_{k\in\Z:2^{-k}\leq\ell(Q)}(2^{\alpha k}|\varphi_k*f(x)|)^q\)^{p/q}w(x)dx\]^{1/p}
\end{align*}
for any $f\in\dot F_\infty^{\alpha,q}$, $0<p<\infty$, $\alpha\in\R$, $0<q<\infty$, and $w\in A_\infty$.
\end{proof}

\end{document}